\numberwithin{equation}{section}
\newtheorem{theorem}{Theorem}[section]
\newtheorem{definition}[theorem]{Definition}
\newtheorem{proposition}[theorem]{Proposition}
\newtheorem{corollary}[theorem]{Corollary}
\newtheorem{lemma}[theorem]{Lemma}
\newtheorem{remark}[theorem]{Remark}
\newcommand{\cali}[1]{\mathscr{#1}}
\newcommand{\supp}{{\rm Supp}}
\newcommand{\Leb}{\mathop{\mathrm{Leb}}\nolimits}
\newcommand{\dist}{\mathop{\mathrm{dist}}\nolimits}
\renewcommand{\Re}{\mathop{\mathrm{Re}}\nolimits}
\renewcommand{\Im}{\mathop{\mathrm{Im}}\nolimits}
\newcommand{\vol}{\mathop{\mathrm{vol}}}
\newcommand{\ddc}{dd^c}
\newcommand{\PSH}{{\rm PSH}}
\newcommand{\B}{\mathbb{B}}
\newcommand{\C}{\mathbb{C}}
\newcommand{\D}{\mathbb{D}}
\newcommand{\N}{\mathbb{N}}
\newcommand{\R}{\mathbb{R}}
\renewcommand\P{\mathbb{P}}
\renewcommand{\S}{\mathbb{S}}
\newcommand{\E}{{\mathbb{E}}}
\newcommand{\boldsym}[1]{\boldsymbol{#1}}
\title{\bf  Bergman kernel functions associated to measures supported on  totally real submanifolds}
\providecommand{\keywords}[1]{\textbf{\textit{Keywords:}} #1}
\providecommand{\subject}[1]{\textbf{\textit{Mathematics Subject Classification 2020:}} #1}
\author{George Marinescu and Duc-Viet Vu}
\newcommand{\Addresses}{{
		\bigskip
		\footnotesize
		
\textsc{George Marinescu, University of Cologne, Department of Mathematics and Computer Science, Division of Mathematics, Weyertal 86-90, 50931 K\"oln,  Germany.}
		\noindent
		\par\nopagebreak
		\noindent
		\textit{E-mail address}: \texttt{gmarines@math.uni-koeln.de}	
		\newline

		\textsc{Duc-Viet Vu, University of Cologne, Department of Mathematics and Computer Science, Weyertal 86-90, 50931 K\"oln,  Germany}
		\noindent
		\par\nopagebreak
		\noindent
		\textit{E-mail address}: \texttt{vuviet@math.uni-koeln.de}	
}}
\date{\today}
\begin{document}
\maketitle
\begin{abstract} We prove that the Bergman kernel function associated to  a smooth measure supported on a piecewise-smooth maximally totally real submanifold $K$ in $\C^n$ is of polynomial growth (e.g, in dimension one,  $K$ is a finite union of transverse Jordan arcs in $\C$). Our bounds are sharp when $K$ is smooth. We give an application to  equidistribution of zeros of random polynomials extending a result of Shiffman-Zelditch to the higher dimensional setting.  
\end{abstract}
\noindent
\keywords {Equilirium measure}, {Bernstein-Markov measure}, {totally real submanifolds}, {equidistribution}, {random polynomial}.
\\

\noindent
\subject{32U15}, {32Q15}, {32V20}, {60F10}, {60B20}, {41A10}.

\tableofcontents

\section{Introduction}

Let $K$ be a non-pluripolar compact subset in $\C^n$, \emph{i.e,} $K$ is not contained in $\{\varphi = -\infty\}$ for some plurisubharmonic (psh) function $\varphi$ on $\C^n$. Let $\mu$ be a probability measure whose support is non-pluripolar and is contained in $K$, and $Q$ be a real continuous function on $K$. Let $\mathcal{P}_k$ be the space of restrictions to $K$ of complex polynomials of degree at most $k$ on $\C^n$. The  scalar product
$$\langle s_1, s_2\rangle_{L^2(\mu,k Q)}:= \int_K s_1  \overline s_2  e^{-2 k Q} d \mu$$
induces the $L^2(\mu, k Q)$-norm on $\mathcal{P}_k$.   \emph{The Bergman kernel function of order $k$} associated to $\mu$ with weight $Q$ is defined by 
$$B_k(x):=  \sup_{s \in \mathcal{P}_k} |s(x) e^{-k Q(x)}|^2/ \|s\|^2_{L^2(\mu, k Q)}$$
for $x \in K$. 
Equivalently if $(s_1, \ldots, s_{d_k})$ (here  $d_k$ denotes the dimension of $\mathcal{P}_k$) is an orthonormal basis of $\mathcal{P}_k$ with respect to the $L^2(\mu,k Q)$-norm, then 
$$B_k(x)= \sum_{j=1}^{d_k} |s_j(x)|^2 e^{-2k Q(x)}.$$ 
When $Q \equiv 0$, we say that $B_k$ is \emph{unweighted}. 
In this case ($Q \equiv 0$) the inverse of $B_k$ is known as \emph{the Christoffel function} in the literature on orthogonal polynomials.  
In practice we also use a modified version of the Bergman kernel function as follows:
$$\tilde{B}_k(x):= \sup_{s \in \mathcal{P}_k} |s(x)|^2/ \|s\|^2_{L^2(\mu, k Q)}$$
for $x \in \C^n$. The advantage of $\tilde{B}_k$ is that it is well defined on $\C^n$.  
  
The asymptotics of the Bergman kernel function 
(or its inverse, the Christoffel function) is essential for many applications in 
(higher dimensional or not) real analysis including approximation theory, 
random matrix theory, etc. There is an immense literature on such asymptotics. 
We refer to \cite{Beckermann-Putinar-Saff-Sty,BoucksomBermanWitt,
Bloom-Levenberg-PW,Danka-Totik,Deift-book,Dunkl-Xu-book,Kroo-Lubinsky,
Lubinsky,Totik,Xu-simplex}, to cite a just few,
for an overview on this very active research field.

Most standard settings are measures supported on concrete domains 
on $\R^n\subset\C^n$  (such as balls or simplexes in $\R^n$) 
or in the unit ball in $\C^n$. Considering measures on $\C^n$ 
whose support are not necessarily in $\R^n$ are also important 
in many applications; e.g., one can consult 
\cite{Beckermann-Putinar-Saff-Sty,Gustafsson-Putinar-Saff-Stylianopoulos,
Totik-transaction,Totik} where the authors consider measures supported 
on finite unions of piecewise smooth Jordan curves $\C$ or 
domains in $\C$ bounded by Jordan curves. 
We refer to the end of this section for a concrete application 
to the equidistribution of zeros of random polynomials.

All of settings mentioned above are particular cases of a more natural situation where our measures are supported on piecewise-smooth domains in a generic Cauchy-Riemann submanifold $K$ in $\C^n$. This is the context in which we will work on in this paper.

We underline that in view of potential applications, 
it is important to work with piecewise-smooth compact sets $K$ 
(rather than only smooth ones). 
In what follows, by a (convex) \emph{polyhedron} in $\R^M$, 
we mean a subset in $\R^M$ which is the intersection of a 
finite number of closed half-hyperplanes in $\R^M$.

\begin{definition}\label{def_piecewisesmooth}
A subset $K$ of a real $M$-dimensional smooth manifold $Y$ is called a nondegenerate $\mathcal{C}^5$ piecewise-smooth submanifold of dimension $m$ if for every point $p \in K,$ there exists a local chart $(W_p,\Psi)$ of $Y$ such that $\Psi$ is a $\mathcal{C}^5$-diffeomorphism from $W_p$ to the unit ball of $\R^{M}$ and  $\Psi(K \cap W_p)$ is the intersection with the unit ball  of a finite union of  convex polyhedra of the same dimension $m.$ 
\end{definition}

A point $p \in K$ is said to be \emph{a regular point} 
of $K$ if the above local chart $(W_p, \Psi_p)$ can be 
chosen such that $\Psi_p(K\cap W_p)$ is the intersection 
of the unit ball with an $m$-dimensional vector subspace in $\R^M$, 
in other words, $K$ is a $m$-dimensional submanifold locally near $p$. 
The regular part of $K$ is the set of regular points of $K.$  
\emph{The singular part} of $K$ is the complement of the regular part 
of $K$ in $K.$ Hence if $K$ is a smooth manifold with boundary, 
then the boundary of $K$ is the singular part of $K$ and its complement 
in $K$ is the regular part of $K$. 

Now let $K$ be a nondegenerate $\mathcal{C}^5$ piecewise-smooth submanifold of $X.$ Since $X$ is a complex manifold, its real tangent spaces have a natural complex structure $J$. We say that  $K$ is \emph{(Cauchy-Riemann or CR for short) generic} in the sense of Cauchy-Riemann geometry if for every $p \in K$ and every sequence of regular points $(p_m)_m \subset K$ approaching to $p$, any limit space of the sequence of tangent spaces of $K$ at $p_m$ is not contained  in a complex hyperplane of the (real) tangent space at $p$ of $X$ (equivalently, if $E$ is a limit space of the sequence $(T_{p_m}K)_{m\in \N}$ of tangent spaces at $p_m$, then  we have $E+ J E= T_p X$, where $T_p X$ is the real tangent space of $X$ at $p$). 

For a CR generic $K$, note that the space $T_pK \cap J T_p K$ ($p$ is a regular point in $K$) is invariant under $J$ and hence has a complex structure induced by $J$. In this case, the complex dimension of $T_pK \cap J T_p K$ is the same  for every $p$ and is called \emph{the CR dimension} of $K$.  If $r$ denotes the CR dimension of $K$, then $r= \dim K -n$. Thus the dimension of a generic $K$ is at least $n.$ 

If $Y$ is CR generic and  $\dim Y=n$, then $Y$ is said to be (maximally) \emph{totally real}, and it is locally the graph of a smooth function over a small ball centered at $0 \in \R^n$ which is tangent at $0$ to $\R^n$. Examples of piecewise-smooth totally real submanifolds are  polygons in $\C$ or boundaries of polygons in $\C$, and polyhedra of dimension $n$ in $\R^n \subset \C^n$.

A notion playing an important role in the study of Bergman kernel functions is the following extremal function
$$V_{K,Q}:= \sup\{\psi \in \mathcal{L}(\C^n): \, \psi \le Q \quad \text{ on } K\},$$
where $\mathcal{L}(\C^n)$ is the set of psh functions $\psi$ on $\C^n$ such that $\psi(z)- \log |z|$ is bounded at infinity on $\C^n$.

Since $K$ is non-pluripolar, the upper semi-continuous regularisation 
$V_{K,Q}^*$ of $V_{K,Q}$ belongs to  $\mathcal{L}(\C^n)$. 
If $V_{K,Q}= V^*_{K,Q}$, then we say $(K,Q)$ is \emph{regular}. 
A stronger notion is the following: we say that $K$ is \emph{locally regular} 
if for every $z \in K$ there is an open neighborhood $U$ of $z$ in $\C^n$ 
such that for every increasing sequence of psh functions $(u_j)_j$ on $U$ 
with $u_j \le 0$ on $K \cap U$, then 
$$(\sup_j u_j)^* \le 0$$
on $K \cap U$. One can see that if $K$ is a locally regular set, 
then $(K,Q)$ is regular for every continuous function $Q$ on $K$.  
The following result answers the question raised in 
\cite[Remark 1.8]{BoucksomBermanWitt}.

\begin{theorem} \label{the-BMpropertylocallyreguCn}   Let $K$ be a compact generic Cauchy-Riemann nondegenerate $\mathcal{C}^5$ piecewise-smooth submanifold  in $\C^n$. Then $K$ is locally regular. 
\end{theorem}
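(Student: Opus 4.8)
The statement is local, so I would fix $p\in K$ and reduce, by a routine localisation and normalisation, to a single quantitative estimate. Recall that for an increasing sequence $(u_j)_j$ of psh functions on a neighbourhood $U$ of $p$ with $u_j\le 0$ on $K\cap U$, the function $v:=(\sup_j u_j)^*$ is psh on $U$, hence locally bounded above; shrinking $U$ and rescaling one may assume $u_j\le 1$ on $U$ for all $j$, and then $v(q)=\limsup_{z\to q}\sup_j u_j(z)$ for $q\in K\cap U$. Near $p$ the set $K$ is a finite union $K_1\cup\dots\cup K_N$ of $\mathcal C^5$ curved polyhedral pieces (the $\Psi_p$-preimages of the polyhedral pieces of $\Psi_p(K\cap W_p)$), and every point of $K$ near $p$ lies in some $K_i$. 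Hence it suffices to prove: for every such piece $K_i$ and every $q\in\overline{K_i}$ there are a ball $B\ni q$ and $C>0$ so that every $u\in\PSH(B)$ with $u\le 1$ on $B$ and $u\le 0$ on $K_i\cap B$ satisfies $u(z)\le C\,\dist(z,K_i)^{1/2}$ on $\tfrac12 B$. Indeed, applying this to $u_j$ gives $\sup_j u_j(z)\le C\,\dist(z,K_i)^{1/2}$ near $q$, whence $v(q)\le C\,\dist(q,K_i)^{1/2}=0$ for every $q\in\overline{K_i}$, and these closures cover $K\cap U$.

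I would prove the quantitative estimate first in the flat model, where $K_i$ is a convex polyhedron of dimension $n$ in $\R^n\subset\C^n$. There it is classical: slice $u$ by complex lines $\ell$, restrict the subharmonic function $u|_\ell$ to a half-disc of $\ell$ bounded by a segment of $K_i\cap\ell$ on which $u|_\ell\le 0$, and compare with the explicit harmonic majorant of the half-disc (the two-constants theorem); the exponent $\tfrac12$ appears at the edges and vertices of $K_i$, where the relevant segment has an endpoint near the projection of the slicing point. For CR dimension $r>0$ the same scheme applies, the extra complex-tangent directions only furnishing more discs; alternatively one reduces to the totally real case by factoring out the complex-tangent directions, along which the set is locally a polydisc, hence locally regular in $\C^r$.

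For a general $\mathcal C^5$ piece, after a complex-affine change of coordinates centred at $q$ I would arrange that $K_i$ lies in a $\mathcal C^5$ totally real graph $M=\{y=\phi(x)\}$ with $\phi(0)=0$, $D\phi(0)=0$ and $\|\phi\|_{\mathcal C^4}$ as small as desired on a small ball, inside which $K_i$ is the image of a flat polyhedral piece under a $\mathcal C^5$ chart of $M$. If $\phi$ were real-analytic I would simply use the biholomorphism $w\mapsto w+i\Phi(w)$, with $\Phi$ the holomorphic extension of $\phi$, which carries $\R^n$ onto $M$ and transports the flat estimate verbatim. In the merely $\mathcal C^5$ case no such biholomorphism exists, and I would instead replace complex-line slicing by slicing along a family of analytic discs $f_\tau\colon\overline{\D}\to B$ attached to $K_i$ along a boundary arc, i.e.\ $f_\tau(A_\tau)\subset K_i$ for an arc $A_\tau\subset\partial\D$, with $f_\tau(0)$ prescribed near $q$ and with the complementary arc of length $\le C\,\dist(f_\tau(0),K_i)^{1/2}$ (the arcs $A_\tau$ terminating at the edge strata of $K_i$, where the loss of exponent occurs). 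Such discs are obtained by solving a Bishop-type integral equation of Cauchy--Riemann (Hilbert-transform) type with $\phi$ as data, with all constants uniform because $\|\phi\|_{\mathcal C^4}$ is small; given such a disc, $u\circ f_\tau$ is subharmonic on $\D$, $\le 0$ on $A_\tau$ and $\le 1$ on $\partial\D$, so the sub-mean-value inequality yields $u(f_\tau(0))\le\frac1{2\pi}|\partial\D\setminus A_\tau|\le C\,\dist(f_\tau(0),K_i)^{1/2}$, as required.

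The hard part is precisely this construction of analytic discs attached along arcs to the merely-$\mathcal C^5$ --- and, near its vertices and edges, itself non-smooth --- piece $K_i$, with the quantitative control $|\partial\D\setminus A_\tau|\lesssim\dist(f_\tau(0),K_i)^{1/2}$ on the attaching arc, and with constants uniform as $q$ ranges over $\overline{K_i}$ and over the finitely many pieces. This is a quantitative refinement of Bishop's analytic-disc method: the limited regularity forces one to run the Hilbert-transform equation by hand and track H\"older norms instead of invoking a holomorphic straightening, and the polyhedral corners force the discs to be attached along arcs that stop at the edge strata of $K_i$. An essentially equivalent alternative, perhaps better suited to the pluripotential-theoretic setting of the paper, is a perturbative analysis of the homogeneous complex Monge--Amp\`ere equation solved by the relative extremal function of the condenser $(K_i,B)$ under the almost-holomorphic change of variables $w\mapsto w+i\phi(\Re w)$, deducing H\"older continuity (hence regularity) of that extremal function; the analytic difficulty is the same. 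Everything else --- the normalisation in the first paragraph and the verification that the single-piece estimate at a boundary point of a piece really controls $v$ there --- is routine.
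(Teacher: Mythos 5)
Your proposal is correct and follows essentially the same route as the paper: localise, reduce to a quantitative H\"older-type bound $u(z)\lesssim\dist(z,K)^\gamma$ for bounded psh functions vanishing on $K$, and prove that bound by pushing $u$ forward along Bishop-type analytic discs partly attached to $K$ (the paper imports exactly such a family from Proposition~\ref{pro_familydiscK} of \cite{Vu_feketepoint}, and its ``End of the proof of Theorem~\ref{the-BMpropertylocallyregu}'' is precisely the argument you sketch). The one genuine technical difference is at the final step: you M\"obius-normalise so that the target point sits at the centre of the disc and the non-attached arc has length $\lesssim\dist(\cdot,K)^{1/2}$, then conclude by the sub-mean-value inequality, whereas the paper keeps a fixed attaching arc $[e^{-i\theta_0},e^{i\theta_0}]$, places the target at a boundary point $z_a$ with $|1-z_a|\lesssim\delta^{1/2}$, and invokes a separate H\"older estimate for subharmonic functions on the disc (Lemma~\ref{le_passagetogeneralcase3}, a harmonic-majorant argument). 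These two devices are equivalent up to a conformal automorphism of $\D$ (the derivative of the M\"obius map taking $z_a$ to $0$ compresses $\partial\D\setminus[e^{-i\theta_0},e^{i\theta_0}]$ to an arc of length $\sim|1-z_a|$), so your variant buys a slightly cleaner final estimate (exponent $1/2$ rather than the paper's $1/5$, which is in any case irrelevant for local regularity) at no extra cost; what you cannot avoid, and rightly flag as the crux, is the quantitative construction of the attached discs near corners and edges with merely $\mathcal C^5$ regularity, which is exactly the content of the cited proposition.
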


Note that it was known that $K$ is locally regular if $K$ 
is smooth real analytic; see, e.g., \cite[Corollary 1.7]{BoucksomBermanWitt}. 

The measure $\mu$ is said to be a \emph{Bernstein-Markov measure} 
(with respect to $(K,Q)$) if  for every $\epsilon >0$, 
there exists $C>0$ such that  
\begin{align*}
\sup_K |s|^2 e^{- 2 k Q} \le C e^{\epsilon k} \|s\|^2_{L^2(\mu, kQ)}
\end{align*}
for every  $s \in \mathcal{P}_k$. In other words, 
the Bergman kernel function of order $k$ grows at most subexponentially, 
\emph{i.e.,}  $\sup_K B_k =O(e^{\epsilon k})$ as $k \to \infty$ 
for every $\epsilon>0$.  

For some examples of Bernstein-Markov measures and criteria 
checking this condition, we refer to  \cite{Bloom-Levenberg-PW}. 
However, apart from few explicit geometric situations, 
there are not many (geometric) examples of Bernstein-Markov 
measures in higher dimensions. This is the motivation for 
our next main result giving a large geometric class 
of Bernstein-Markov measures.  

\begin{theorem} \label{the-BMpropertycn}
Let $K$ be a compact generic Cauchy-Riemann nondegenerate 
$\mathcal{C}^5$ piecewise-smooth submanifold in $\C^n$. 
Let $\mu$ be a finite measure supported on $K$ such that there exist 
constants $\tau>0, r_0>0$ satisfying $\mu(\B(z,r)\cap K) \ge r^\tau$ 
for every $z \in K$,  $r \le r_0$  (where $\B(z,r)$ denotes the ball 
of radius $r$ centered at $z$ in $\C^n$). 
Then  for  every continuous function $Q$ on $K$, 
$\mu$ is a Bernstein-Markov measure with respect to $(K,Q)$.
\end{theorem}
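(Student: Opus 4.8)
The plan is to use the local regularity of $K$ (Theorem~\ref{the-BMpropertylocallyreguCn}) to control the global weighted extremal function $V^*_{K,Q}$ uniformly near $K$, and then to convert a pointwise upper bound for a polynomial $s\in\mathcal P_k$ (extended to a polynomial of degree $\le k$ on $\C^n$) into a lower bound for its $L^2(\mu,kQ)$-mass on a small ball whose $\mu$-measure is only sub-exponentially small in $k$, using an elementary Cauchy estimate for $\nabla s$ together with the density hypothesis $\mu(\B(z,r)\cap K)\ge r^\tau$. Fix $\epsilon>0$; it is enough to find $C>0$ so that $\sup_K|s|^2e^{-2kQ}\le Ce^{\epsilon k}\|s\|^2_{L^2(\mu,kQ)}$ for every $k$ and every $s\in\mathcal P_k$. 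We may assume $s\not\equiv0$; since $K$ is non-pluripolar this gives $M:=\sup_K|s|e^{-kQ}>0$ and $\|s\|_{L^2(\mu,kQ)}>0$, and we pick $x_0\in K$ with $|s(x_0)|e^{-kQ(x_0)}=M$.

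First I would record the weighted Bernstein--Walsh inequality: the function $\tfrac1k\log|s|-\tfrac1k\log M$ lies in $\mathcal L(\C^n)$ and is $\le Q$ on $K$, hence is $\le V_{K,Q}\le V^*_{K,Q}$, so $|s(z)|\le M\,e^{kV^*_{K,Q}(z)}$ for all $z\in\C^n$. The key point is the uniform estimate: for each $\delta>0$ there is $R_\delta>0$, independent of $s$ and $x_0$, with $\sup_{\B(x_0,R_\delta)}V^*_{K,Q}\le Q(x_0)+\delta$ for every $x_0\in K$. Indeed, $K$ is locally regular by Theorem~\ref{the-BMpropertylocallyreguCn}, so $(K,Q)$ is regular and $V^*_{K,Q}=V_{K,Q}\le Q$ on $K$; if the estimate failed there would exist $\delta>0$, points $x_j\in K$ and $z_j\in\C^n$ with $|z_j-x_j|\to0$ and $V^*_{K,Q}(z_j)>Q(x_j)+\delta$, and, passing to a subsequence with $x_j\to x_*\in K$ (so $z_j\to x_*$) and using upper semicontinuity of $V^*_{K,Q}$ together with continuity of $Q$, we would get $V^*_{K,Q}(x_*)\ge Q(x_*)+\delta$, contradicting $V^*_{K,Q}\le Q$ on $K$. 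Combining this with Bernstein--Walsh and $|s(x_0)|=Me^{kQ(x_0)}$ yields the localized bound $\sup_{\B(x_0,R_\delta)}|s|\le e^{\delta k}|s(x_0)|$.

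Next, by Cauchy's estimates, $\|\nabla s\|_{L^\infty(\B(x_0,R_\delta/2))}\le \tfrac{C_n}{R_\delta}\sup_{\B(x_0,R_\delta)}|s|\le \tfrac{C_n}{R_\delta}e^{\delta k}|s(x_0)|$ for a dimensional constant $C_n$, so $|s(z)|\ge\tfrac12|s(x_0)|$ whenever $|z-x_0|\le\rho_k:=\tfrac{R_\delta}{2C_n}e^{-\delta k}$. For $k$ large we have $\rho_k\le r_0$ and, by uniform continuity of $Q$ on $K$, $|Q(z)-Q(x_0)|\le\delta$ on $\B(x_0,\rho_k)\cap K$, whence $|s(z)|e^{-kQ(z)}\ge\tfrac12Me^{-\delta k}$ there and therefore
\[
\|s\|^2_{L^2(\mu,kQ)}\ \ge\ \int_{\B(x_0,\rho_k)\cap K}|s|^2e^{-2kQ}\,d\mu\ \ge\ \tfrac14M^2e^{-2\delta k}\,\rho_k^{\tau}\ =\ \tfrac14\Big(\tfrac{R_\delta}{2C_n}\Big)^{\tau}M^2\,e^{-(2+\tau)\delta k}.
\]
Choosing $\delta:=\epsilon/(2+\tau)$ gives $M^2\le C_\epsilon e^{\epsilon k}\|s\|^2_{L^2(\mu,kQ)}$ for all large $k$; the finitely many remaining values of $k$ are covered by the trivial finiteness of $\sup_KB_k$ on the compact set $K$, and enlarging $C_\epsilon$ finishes the argument. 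Since $\epsilon>0$ was arbitrary, $\mu$ is a Bernstein--Markov measure for $(K,Q)$.

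I expect the main obstacle to be precisely the uniform control $\sup_{\B(x_0,R_\delta)}V^*_{K,Q}\le Q(x_0)+\delta$: because the extremal point $x_0$ moves with $s$ (and with $k$), one cannot replace it by a fixed point, and one genuinely needs the local regularity of $K$ — that is, Theorem~\ref{the-BMpropertylocallyreguCn} — to prevent $V^*_{K,Q}$ from rising above $Q$ arbitrarily close to $K$. The remaining ingredients (the weighted Bernstein--Walsh inequality, the Cauchy gradient estimate and the integration against $\mu$) are routine; the only bookkeeping is to distribute the tolerance $\epsilon$ among the several $O(\delta k)$ losses so that their sum stays below $\epsilon k$.
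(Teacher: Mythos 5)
Your proof is correct, and it relies on the same essential new input as the paper, namely Theorem~\ref{the-BMpropertylocallyreguCn} (local regularity of $K$). The difference is purely in what one does with that input. The paper dispatches the step from local regularity to the Bernstein--Markov property in one line, by invoking the criterion \cite[Proposition 3.4]{Bloom-Levenberg-PW}, which is precisely the statement ``locally regular $K$ plus a mass-density bound $\mu(\B(z,r)\cap K)\geq r^\tau$ implies $\mu$ is Bernstein--Markov.'' You instead re-derive that criterion from scratch: weighted Bernstein--Walsh, the uniform closeness $\sup_{\B(x_0,R_\delta)}V^*_{K,Q}\leq Q(x_0)+\delta$ via a compactness/upper-semicontinuity argument (this is where local regularity enters, giving $V^*_{K,Q}\leq Q$ on $K$), the Cauchy gradient bound to localize $|s|$ near its extremal point $x_0$ on a ball of radius $\rho_k\asymp e^{-\delta k}$, and then the density hypothesis to get a sub-exponentially small lower bound for the $L^2$-mass. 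All the steps check out: the contradiction argument for the uniform bound on $V^*_{K,Q}$ is sound, the Cauchy estimate and the choice of $\rho_k$ are consistent, and after $\mu(\B(x_0,\rho_k)\cap K)\geq\rho_k^\tau$ one lands on $M^2\leq C_\delta e^{(2+\tau)\delta k}\|s\|^2_{L^2(\mu,kQ)}$, so $\delta=\epsilon/(2+\tau)$ closes the argument. The one thing worth tidying in your write-up is to shrink $R_\delta$ at the outset so that $\rho_k\leq r_0$ for all $k$ and so that the uniform continuity of $Q$ gives $|Q(z)-Q(x_0)|\leq\delta$ on $\B(x_0,\rho_k)\cap K$ without needing to separate out finitely many $k$; this is cosmetic. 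In short: same strategy, but you unpack the Bloom--Levenberg--Piazzon--Wielonsky criterion rather than citing it, which makes your version more self-contained at the cost of length.
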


To the best of our knowledge, the above result was only known 
when $K$ is real analytic. We are not aware of results of this type 
in previous literature for maximally totally real submanifolds. 
A measure $\mu_0$ on $K$ is said to be a smooth volume form 
on $K$ if $\mu_0$ is given by a smooth volume form
locally at every regular point in $K$, 
and if for every singular point $p$ in $K$, there is a local chart $(\Psi,W_p)$ 
as in Definition \ref{def_piecewisesmooth}, such that 
$\Psi(K \cap W_p)= \bigcup_{j=1}^s P_j$, where $P_j$'s 
are polyhedra in $\C^n$ of the same dimension for every $1 \le j \le s$, 
and  the restriction of $\mu_0$ to $P_j$ is a smooth volume form on 
$P_j$ for $1 \le j \le s$. 

Let $\Leb_K$ be now a smooth volume form on $K$. 
Then for any $M>0$ the measure $\mu=|z-z_0|^M \Leb_K$ 
satisfies the hypothesis of 
Theorem \ref{the-BMpropertycn}. Here is our next main result.

\begin{theorem} \label{the-strongBMintroCn}  
Let $K$ be a compact generic Cauchy-Riemann nondegenerate 
$\mathcal{C}^5$ piecewise-smooth submanifold in $\C^n$
of dimension $n_K$. 
Let $Q$ be a H\"older continuous function of H\"older exponent 
$\alpha \in (0,1)$ on $K$, and let $\Leb_K$ be a smooth volume form on $K$, 
and $\mu = \rho \Leb_K$, where $\rho \ge 0$ and 
$\rho^{-\lambda} \in L^1(\Leb_K)$ for some  constant $\lambda>0$. 
Then we have 
$$\sup_K  B_k \le C k^{2n_K(\lambda+1)/ (\alpha\lambda)}\,,$$
for some constant $C>0$ independent of $k$.
\end{theorem}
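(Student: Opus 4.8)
\emph{Overall plan.} I would separate an elementary reduction by H\"older's inequality from a geometric core estimate. Concretely, it suffices to prove
\begin{equation}\label{e:core}
\sup_K |s|^2 e^{-2kQ}\ \le\ C_0\, k^{2n_K/\alpha}\int_K |s|^2 e^{-2kQ}\,\Leb_K\qquad\text{for all }s\in\mathcal{P}_k,\ k\ge 1,
\end{equation}
with $C_0$ independent of $s,k$; this is a polynomial-rate (``strong'') Bernstein--Markov inequality for the smooth volume form $\Leb_K$ together with the H\"older weight $Q$, and it is the place where the totally real geometry of $K$ is used. \emph{Step 1 (reduction to \eqref{e:core}).} Fix $s\in\mathcal{P}_k$ and put $g:=|s|^2e^{-2kQ}$ on $K$. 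Writing $\Leb_K=\rho^{\lambda/(\lambda+1)}\cdot\rho^{-\lambda/(\lambda+1)}\Leb_K$ and applying H\"older's inequality with exponents $\tfrac{\lambda+1}{\lambda}$ and $\lambda+1$,
\begin{align*}
\int_K g\,\Leb_K
&\le \Big(\int_K g^{(\lambda+1)/\lambda}\rho\,\Leb_K\Big)^{\lambda/(\lambda+1)}\Big(\int_K \rho^{-\lambda}\Leb_K\Big)^{1/(\lambda+1)}\\
&\le \big(\sup_K g\big)^{1/(\lambda+1)}\,\|s\|_{L^2(\mu,kQ)}^{2\lambda/(\lambda+1)}\,\|\rho^{-\lambda}\|_{L^1(\Leb_K)}^{1/(\lambda+1)},
\end{align*}
where I used $g^{(\lambda+1)/\lambda}=g\cdot g^{1/\lambda}\le g\,(\sup_K g)^{1/\lambda}$ and $\int_K g\rho\,\Leb_K=\|s\|_{L^2(\mu,kQ)}^2$. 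Since $C_1:=\|\rho^{-\lambda}\|_{L^1(\Leb_K)}^{1/(\lambda+1)}<\infty$ by hypothesis, substituting into \eqref{e:core} and solving for $\sup_K g$ (which may be assumed positive) gives $\sup_K g\le (C_0C_1)^{(\lambda+1)/\lambda}\,k^{2n_K(\lambda+1)/(\alpha\lambda)}\,\|s\|_{L^2(\mu,kQ)}^2$; dividing by $\|s\|_{L^2(\mu,kQ)}^2$ and taking the supremum over $x\in K$ and $s\in\mathcal{P}_k$ yields Theorem \ref{the-strongBMintroCn}. In particular the exponent $2n_K/\alpha$ in \eqref{e:core} is exactly what one needs.

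\emph{Step 2 (the core estimate, unweighted case).} I would first establish $\sup_K|s|^2\le C k^{2n_K}\int_K|s|^2\Leb_K$, equivalently the Remez-type lower bound
$$\Leb_K\big(\{x\in K:\ |s(x)|^2\ge\tfrac12\sup_K|s|^2\}\big)\ \ge\ c\,k^{-2n_K},$$
from which the case $Q\equiv0$ of \eqref{e:core} follows since $\int_K|s|^2\Leb_K\ge\tfrac12\sup_K|s|^2\cdot ck^{-2n_K}$. For this Remez inequality I would argue as in one variable: if $E':=\{|s|^2<\tfrac12\sup_K|s|^2\}$ had complement of $\Leb_K$-measure $m<ck^{-2n_K}$, then $\sup_{E'}|s|\le\tfrac1{\sqrt2}\sup_K|s|$, and evaluating the Bernstein--Walsh inequality $|s(z)|\le(\sup_{E'}|s|)\exp(kV_{E'}(z))$ at a point $x_0\in K$ where $|s|$ attains its maximum on $K$ gives $\sup_K|s|\le\tfrac1{\sqrt2}\sup_K|s|\cdot\exp(kV_{E'}(x_0))$. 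Here $\dist(x_0,E')\lesssim m^{1/n_K}$ (a ball $\B(x_0,r)$ meets $E'$ once its $\Leb_K$-mass exceeds $m$), and a quantitative form of Theorem \ref{the-BMpropertylocallyreguCn} — that the Siciak extremal function of a subset of $K$ of controlled $\Leb_K$-density is H\"older of exponent $\tfrac12$ near $K$ — yields $V_{E'}(x_0)\lesssim m^{1/(2n_K)}$, whence $k\,m^{1/(2n_K)}\gtrsim1$, a contradiction for $m$ small. Setting this up uses the charts of Definition \ref{def_piecewisesmooth} to reduce to a totally real graph over $\R^{n_K}\subset\C^n$; near the \emph{regular} part of $K$ the H\"older exponent is $1$ and one gets $k^{-n_K}$, but the singular part forces the rate $k^{-2n_K}$.

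\emph{Step 2 (putting back the H\"older weight).} To upgrade to \eqref{e:core} I would run the same scheme localised near a point $x_0$ realising $\sup_K|s|^2e^{-2kQ}$, now on balls $\B(x_0,r)$ of radius $r\sim k^{-1/\alpha}$: since $|Q(x)-Q(x_0)|\le L\,\dist(x,x_0)^\alpha$, on such a ball $k\,\dist(\cdot,x_0)^\alpha\lesssim1$, so $e^{-2kQ}$ is comparable to the constant $e^{-2kQ(x_0)}$ there and can be frozen; carrying out the Remez/submean-value argument at this finer scale replaces the power $k^{-n_K}$ coming from a real $k^{-1}$-neighbourhood by $k^{-n_K/\alpha}$, and hence $k^{2n_K}$ by $k^{2n_K/\alpha}$, which is \eqref{e:core}. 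Equivalently, one first mollifies $Q$ to a smooth $Q_k$ with $\|Q-Q_k\|_{L^\infty(K)}\le 1/k$ and $\|\nabla Q_k\|_{L^\infty}\lesssim k^{1/\alpha-1}$, so that $e^{-2kQ}\asymp e^{-2kQ_k}$ with uniform constants, and then handles the Lipschitz weight $Q_k$.

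\emph{Main obstacle.} The crux is Step 2, and in particular the behaviour at the singular locus of $K$: there $V_K$ is genuinely only H\"older-$\tfrac12$, so the natural complex scale is $k^{-2}$ rather than $k^{-1}$, and this must be combined with the H\"older scale $k^{-1/\alpha}$ of the weight and with a Remez inequality on the piecewise-smooth $K$ \emph{without losing additional powers of $k$} — avoiding the double-counting of scales is exactly what makes the exponent $2n_K/\alpha$ sharp. Proving the required quantitative, uniform-up-to-the-corners H\"older estimate for $V_{K,Q}$ on a merely $\mathcal{C}^5$ piecewise-smooth totally real $K$ (rather than a smooth real-analytic one), in a form that applies to the sublevel sets $E'\subset K$ arising above, is where the real work lies; granting it, the measure-theoretic manipulations are routine.
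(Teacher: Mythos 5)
Your Step 1 (the H\"older reduction from the full statement to the core inequality \eqref{e:core}) is algebraically correct, and it is in the same spirit as what the paper does — the paper also uses H\"older's inequality with the conjugate pair $\bigl(\tfrac{\lambda+1}{\lambda},\lambda+1\bigr)$, except there the H\"older step is applied at the level of the exponent $\lambda'=\lambda/(\lambda+1)$ of the subharmonic function $|g_s\circ F|^{2\lambda'}$ along the analytic disc (see the display labelled \texttt{ine-chanchuanL1cuagssm-sing}), whereas you defer it to the end and aim at a clean strong Bernstein--Markov inequality for $\Leb_K$ itself. Either organisation yields the exponent $2n_K(\lambda+1)/(\alpha\lambda)$; there is no problem here.

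The genuine gap is in Step 2, and it is exactly where you flag the ``main obstacle'': you invoke a \emph{quantitative Remez/Bernstein--Walsh estimate for arbitrary subsets $E'\subset K$ of controlled $\Leb_K$-density}, of the form $V_{E'}(x_0)\lesssim\dist(x_0,E')^{1/2}\lesssim m^{1/(2n_K)}$ with constants uniform in $E'$ and $m$. Nothing in the paper gives this: Theorems \ref{the-BMpropertylocallyreguCn} and \ref{the1CalphaClapharegu} (and Theorem \ref{the1CalphaClaphareguCn}) produce H\"older regularity only for $V_K$ of the \emph{fixed} piecewise-smooth submanifold $K$, relying on the analytic-disc construction (Proposition \ref{pro_familydiscK}) that uses the $\mathcal{C}^5$ structure of $K$ in an essential way. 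A sublevel set $E'=\{|s|^2<\tfrac12\sup_K|s|^2\}$ is a priori a wild subset of $K$, and classical density-to-extremal-function estimates of this strength are only available in special geometries (Remez on $[-1,1]$, Brudnyi--Ganzburg on convex bodies in $\R^n$). Establishing this for sublevel sets of a merely $\mathcal{C}^5$ piecewise-smooth totally real $K\subset\C^n$, uniformly up to corners, is at least as hard as the theorem you are trying to prove, and it is precisely what the paper circumvents: instead of a Remez inequality, the proof of Theorem \ref{the-strongBMintro} uses the $(n{-}1)$-parameter family of analytic discs of Theorem \ref{th-discnoboundary} attached to $K$ at scale $t=k^{-2/\alpha}$, applies the elementary subharmonic bound of Lemma \ref{le-hamsubtrenduongtron} to $|g_s\circ F|^{2\lambda'}$ on each disc, and integrates over the parameter space using the Jacobian estimate $|\det DG|\approx t^n$ to convert the boundary integrals directly into $\int_K|g_s|^{2\lambda'}\,d\Leb_K$. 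No extremal function of a subset ever appears, and no Remez step is needed. A secondary issue: the localisation to balls of radius $k^{-1/\alpha}$ and the claimed replacement of $k^{-n_K}$ by $k^{-n_K/\alpha}$ is not a computation but a heuristic — the Siciak extremal function of a subset of a ball of radius $r$ scales differently from that of a subset of all of $K$, and keeping track of this is nontrivial; the paper handles the weight by simply feeding the scale $t=k^{-2/\alpha}$ into the disc family and using the $\mathcal{C}^{0,\alpha/2}$ regularity of $\phi_K$ from Theorem \ref{the1CalphaClapharegu}. Finally, your proposal does not address the CR-generic case $\dim K>n$, which the paper handles separately by slicing $K$ with the real linear subspaces $\Im\mathbf{z}_2=v$.
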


We would like to point out that \cite[Remark 3.2]{Berman-weighted-Cn} 
shows that the regularity of weights affects considerably the growth 
of the Bergman kernel function. One can also consult the last reference for upper bounds 
for $K=\C^n$ and $\mu= \Leb_{\C^n}$ (the Lebesgue measure on $\C^n$).  

There have been very few works concerning polynomial growth 
of Bergman kernel functions associated to measures supported on real submanifolds 
in higher dimensions.  With an exception of \cite{Berman-OrtegaCerda}, 
known upper bounds on $B_k$ were proved mostly based on special geometric structures 
of the compact $K \subset \R^n$ (see, e.g.,\cite{Kroo-Lubinsky,Lubinsky}). 
Such a method is not useful in dealing with general situations as in 
Theorem \ref{the-strongBMintroCn}.  In \cite{Berman-OrtegaCerda} 
it was supposed that $K$ is smooth real algebraic in $\R^m$ or 
the closure of a bounded convex open subset in $\R^m$, 
and their arguments use this hypothesis in an essential way.

Note that $\mu=|z-z_0|^M \Leb_K$ for any constant $M>0$ satisfies the hypothesis of Theorem \ref{the-strongBMintroCn}.
In general it is not possible to bound from below $B_k$ by a polynomial in $k$, see Remark \ref{re-notlowerbound}. 

By
\cite[Theorems 2 and 4]{Berman-OrtegaCerda}, if $K$ is the closure of a bounded open convex subset in $\R^n$ and $Q \equiv 0$ and $\mu$ is the restriction of the Lebesgue measure on $\R^n$ to $K$, then $k^{-n} B_n$ is bounded and bounded away from $0$ on a fixed compact subset in the interior of $K$ (the behaviour of $B_k$ at boundary points is more complicated).  On the other hand, for general $\mu$ on such $K$,  by \cite{Danka-Totik}, the upper bound for $B_k$ on $K$ can not be $O(k^n)$ in general. To be precise, it was proved there that if $K$ is a smooth Jordan curve in $\C$, and $\mu_0$ is the arc measure on $K$, and   $\mu =(z-z_0)^\alpha \mu_0$ for some constant $\alpha>0$, then $B_k(z_0) \approx k^{1+ \alpha}$ as $k \to \infty$. One can see also  \cite{Kroo-Lubinsky-boundary-ball} for a similar asymptotic in the case where $K$ is the closure of the unit ball in $\R^n$.     

We note that by \cite[Corollary 2.13]{DinhNguyen-betaensemble},  if $(K,\mu,Q)$ is as in the hypothesis of Theorem \ref{the-strongBMintroCn}), then  the triple $(K,\mu,Q)$ is $1$-Bernstein-Markov in the sense that for every constant $0<\delta \le 1$, there exists a constant $C>0$ such that 
\begin{align*} 
\sup_K |s|^2 e^{-2 kQ} \le C e^{C k^{1-\delta}} \|s\|^2_{L^2(\mu, k Q)}
\end{align*}
for every $s \in \mathcal{P}_k$. This is  much weaker than our bound. Nevertheless, \cite[Corollary 2.13]{DinhNguyen-betaensemble} is applicable to a broader class of $K$.

When $K$ is smooth (no boundary) and $Q \in \mathcal{C}^{1,\delta}(K)$ for some constant $\delta>0$ (e.g, $K$ is the unit circle in $\C$ as in a classical setting), we obtain sharp bounds which have potential applications in studying sampling or interpolation problems of multivariate polynomials on maximally totally real sets in $\C^n$. The case where $K$ is compact smooth real algebraic was considered in \cite{Berman-OrtegaCerda}. Here is our next main result. 

\begin{theorem} \label{the-bergman-smoothCn} Let $K$ be a compact $\mathcal{C}^5$ maximally totally real submanifold without boundary in $\C^n$. Let $\mu$ be a smooth volume form on $K$ regarded as a measure on $\C^n$. Let $Q \in \mathcal{C}^{1,\delta}(K)$ for some constant $\delta>0$.  Let $B_k$ be the Bergman kernel function associated to $\mu$ with weight $Q$. Then there exists a constant $C>0$ such that  
\begin{align}\label{ine-almost-optimal-Bergman}
 \sup_K B_k \le C k^{n}
\end{align}
for every $k \ge 0$. 
\end{theorem}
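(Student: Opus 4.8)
\medskip
\noindent\textbf{Proof proposal.}

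The plan is to bound $\sup_K B_k$ by a self-improving argument carried out at the length scale $1/k$, the key input being a Bernstein--Walsh/Markov inequality adapted to $K$. Since $\dim_\R K=n$ (as $K$ is maximally totally real in $\C^n$) and
$\sup_K B_k=\sup\{|s(x)|^2e^{-2kQ(x)}:x\in K,\ s\in\mathcal P_k,\ \|s\|_{L^2(\mu,kQ)}=1\}$, it suffices to fix $s\in\mathcal P_k$ with $\|s\|_{L^2(\mu,kQ)}=1$, let $x_0\in K$ be a point at which $F(y):=|s(y)|^2e^{-2kQ(y)}$ attains its maximum $M^2:=\sup_K F$ over $K$, and prove $M^2\le Ck^n$ with $C$ independent of $s$ and $k$.

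The first step is a uniform derivative estimate: $|\nabla s|(z)\le Ck\,Me^{kQ(x_0)}$ for all $z$ in the complex ball $\B(x_0,1/k)$. I would obtain it from the Bernstein--Walsh inequality $|s(z)|\le Me^{kV_{K,Q}(z)}$, valid for every $z\in\C^n$ because $\tfrac1k\log|s|-\tfrac1k\log M$ lies in $\mathcal L(\C^n)$ and is $\le Q$ on $K$, combined with the sharp (linear) modulus of continuity of the extremal function: there are constants $C,\rho>0$, uniform over $x_0\in K$, with $V_{K,Q}(z)\le Q(x_0)+C|z-x_0|$ whenever $|z-x_0|\le\rho$. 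These two facts give $|s(z)|\le Me^{kQ(x_0)}e^{C}$ on $\B(x_0,1/k)$, and Cauchy estimates on polydiscs of polyradius $\asymp 1/k$ centred at points of $\B(x_0,c/k)$ yield the claimed bound for $|\nabla s|$ there, hence a fortiori for $|\nabla_K s|$.

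The second step concludes. Since $|s(x_0)|=Me^{kQ(x_0)}$, the fundamental theorem of calculus along a short arc in $K$ joining $y$ to $x_0$ together with the derivative bound gives $|s(y)|\ge Me^{kQ(x_0)}(1-C'k|y-x_0|)$; as $Q$ is Lipschitz, for $y\in K\cap \B(x_0,c/k)$ with $c>0$ a fixed small constant we get $|s(y)|e^{-kQ(y)}\ge\tfrac12M$, hence $F(y)\ge M^2/4$ on $K\cap\B(x_0,c/k)$. Finally, since $\mu$ is a smooth volume form on the $n$-dimensional compact manifold $K$, one has $\mu\big(K\cap\B(x_0,c/k)\big)\ge c'(c/k)^n$ uniformly in $x_0$, so
\[1=\int_K F\,d\mu\ \ge\ \frac{M^2}{4}\,\mu\big(K\cap\B(x_0,c/k)\big)\ \ge\ \frac{M^2}{4}\,c'\Big(\frac ck\Big)^n,\]
which gives $M^2\le Ck^n$, i.e.\ \eqref{ine-almost-optimal-Bergman}. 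The bound is of the right order: $\int_K B_k\,d\mu=d_k\asymp k^n$ forces $\sup_K B_k\gtrsim k^n$.

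The main obstacle is the linear bound $V_{K,Q}(z)\le Q(x_0)+C|z-x_0|$ near $x_0$ — equivalently, the Markov inequality $\|\nabla_K s\|_{L^\infty(K)}\le Ck\,\|s\|_{L^\infty(K)}$ with the \emph{optimal} power $k$. This is precisely where the hypotheses are used: at a point of a smooth totally real submanifold of maximal dimension \emph{without boundary}, the pluricomplex extremal function vanishes to order exactly one in the transverse directions (as one sees already for the model $K=[-1,1]^n\subset\R^n\subset\C^n$ near an interior point, where $V_K\asymp|\Im z|$), in contrast to the order-$\tfrac12$ behaviour at corners or boundary points, which is responsible for the larger exponent in Theorem \ref{the-strongBMintroCn} and prevents one from simply invoking that theorem. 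I would establish this linear growth from the local regularity of $K$ (Theorem \ref{the-BMpropertylocallyreguCn}) together with a comparison to the model after a local straightening of $K$, using the $\mathcal C^{1,\delta}$-regularity of $Q$ to pass from the unweighted extremal function to $V_{K,Q}$.
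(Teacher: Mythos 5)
Your concluding step is correct and genuinely different from the paper's. Both arguments share the same opening move (and the same essential input): Bernstein--Walsh together with the global Lipschitz regularity of $V_{K,Q}$ gives $|s|\lesssim Me^{kQ(x_0)}$ on $\B(x_0,1/k)$, hence by Cauchy estimates $|\nabla s|\lesssim kMe^{kQ(x_0)}$ nearby --- this is precisely the content of Lemma~\ref{le-Markovinequalitysection}. After that you diverge. You take $x_0$ to be a maximizer of $F=|s|^2e^{-2kQ}$ over $K$, use the gradient bound and the fundamental theorem of calculus (plus the Lipschitz continuity of $Q$) to conclude $F\ge M^2/4$ on $K\cap\B(x_0,c/k)$, and compare with $\int_K F\,d\mu=1$ via the density lower bound $\mu\big(K\cap\B(x_0,c/k)\big)\gtrsim k^{-n}$. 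The paper instead works at an arbitrary $p_0\in K$, integrates the subharmonic function $|g_s\circ F|^2$ over the $k$-scaled family of analytic discs from Theorem~\ref{th-discnoboundary}, and uses Lemma~\ref{le-hamsubtrenduongtron} together with the Jacobian estimate to obtain $|g_s(p_0)|^2\le 2\epsilon M+ACk^n$, with $\epsilon M$ absorbed afterward. Your maximum-point mechanism is shorter and more transparent; and it would even carry over to the singular/weighted case of Theorem~\ref{the-strongBMintroCn} after replacing $F$ by $F^{\lambda'}$ and $1/k$ by $k^{-2/\alpha}$, giving the same exponent $2n_K(\lambda+1)/(\alpha\lambda)$ via a H\"older-inequality twist parallel to the one the paper applies to $|g_s|^{2\lambda'}$.

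The one genuine gap is in your last paragraph, where you propose to prove the key estimate $V_{K,Q}(z)\le Q(x_0)+C|z-x_0|$ \emph{from the local regularity of $K$ together with a comparison to the model after a local straightening}. That route does not go through. Local regularity in the sense of Theorem~\ref{the-BMpropertylocallyreguCn} is a qualitative upper-envelope statement and provides no modulus of continuity. More to the point, a merely $\mathcal{C}^5$ (not real-analytic) maximally totally real $K$ admits no local biholomorphism onto an open piece of $\R^n$ --- one can only kill finitely many Taylor coefficients of the graphing function $h$ at a point --- and a non-holomorphic straightening does not preserve plurisubharmonicity, hence cannot be used to transport envelopes or compare extremal functions. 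The Lipschitz bound you need is exactly Theorem~\ref{the1CalphaClapharegunanchinhquy} (with Theorem~\ref{the1CalphaClaphareguCn} its $\C^n$ avatar), and the paper's proof of that theorem is itself the analytic-disc argument: Proposition~\ref{pro_familydiscK} combined with the $\mathcal{C}^{1,\delta}$ variant \eqref{ine_passagetogeneralcase4} of Lemma~\ref{le_passagetogeneralcase3}. So you have removed the analytic discs from the Bergman-kernel step, but they remain indispensable behind the regularity input you invoke; you should cite that theorem directly rather than sketch an alternative proof of it.
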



When $K$ is smooth compact real algebraic of dimension $n$ in $\R^m$, it was proved in \cite{Berman-OrtegaCerda} that $B_k/k^n \approx 1$.  The proof of the upper bound for $B_k$ in \cite{Berman-OrtegaCerda} relies crucially on the algebraicity of $K$. Our approach to Theorems \ref{the-strongBMintroCn} and \ref{the-bergman-smoothCn} is different and is based on constructions of analytic discs partly attached to $K$, subharmonic functions on unit discs, and fine regularity of extremal plurisubharmonic envelopes associated to $K$.    

In dimension one, we refer to \cite[Theorem 4.3]{Gustafsson-Putinar-Saff-Stylianopoulos} for a similar bound when the case where $K$ is analytic, and to \cite{Danka-Totik,Totik-transaction} and references therein for  asymptotics of $B_k$ (it behaves like $k$ at regular points, but the asymptotic of $B_k$ at singular points is more complicated). 

We are not aware of any estimates of flavor similar to Theorem \ref{the-bergman-smoothCn} in the previous literature for general  smooth maximally totally real submanifolds in higher dimensions (except the real algebraic case mentioned above in \cite{Berman-OrtegaCerda}).
We refer to \cite{Antezana-interpolation,Kroo-upperbound-convex,Prymak} for more precise bounds when $K$ is a convex subset in $\R^n$.  




Our next result is the following convergence which is a consequence of Theorem \ref{the-strongBMintroCn}.

\begin{theorem} \label{th-xapxiphiKBkCn}   
Let $K$ be a compact generic Cauchy-Riemann nondegenerate 
$\mathcal{C}^5$ piecewise-smooth submanifold in $\C^n$. 
Let $Q$ be a H\"older continuous function on $K$ and $\mu$ be as in Theorem 
\ref{the-strongBMintroCn}. Then there exists constant $C>0$ such that
$$\bigg\|\frac{1}{2k}\log \tilde{B}_k - V_{K,Q}\bigg\|_{\mathcal{C}^0(\C^n)} 
\le C\, \frac{\log k}{k}\,\cdot$$
\end{theorem}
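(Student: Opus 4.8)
The plan is to compare $\frac{1}{2k}\log\tilde B_k$ with $V_{K,Q}$ from both sides, using the polynomial bound of Theorem~\ref{the-strongBMintroCn} together with standard extremal-function estimates. First I would recall the classical two-sided comparison between the Bergman kernel function and the extremal function: on the one hand, for any $s\in\mathcal{P}_k$ one has the Bernstein--Walsh type inequality $|s(z)|\le \|s\|_{L^\infty(K)}\,e^{kV_{K,0}(z)}$, and more generally with the weight, $|s(z)|e^{-kQ(z)}$ on $K$ controls $|s(z)|$ on $\C^n$ through $e^{kV_{K,Q}(z)}$; combined with the Bernstein--Markov property (which holds here since $\mu$ satisfies the hypotheses of Theorem~\ref{the-BMpropertycn}) this gives $\frac{1}{2k}\log\tilde B_k \le V_{K,Q} + o(1)$ uniformly, and in fact the argument upgrades to a rate once we control $\sup_K B_k$. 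On the other hand, a lower bound $\frac{1}{2k}\log\tilde B_k \ge V_{K,Q} - o(1)$ follows by testing with well-chosen polynomials: one uses that $V_{K,Q}$ can be approximated from below by $\frac{1}{k}\log|s|$ for suitable $s\in\mathcal{P}_k$ of controlled $L^\infty(K)$ norm, a consequence of the regularity of $(K,Q)$ (Theorem~\ref{the-BMpropertylocallyreguCn} gives local regularity, hence regularity), and such $s$ has $L^2(\mu,kQ)$-norm bounded below in terms of its sup-norm by a power of $k$ again via the measure lower bound on $\mu$.

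Concretely, I would proceed as follows. Step 1: prove the upper estimate $\frac{1}{2k}\log\tilde B_k(z) \le V^*_{K,Q}(z) + \frac{1}{2k}\log\big(\sup_K B_k\big) + \frac{C}{k}$ for all $z\in\C^n$. This is immediate from the definition of $\tilde B_k$: if $s$ realizes (nearly) the supremum, then $|s(z)|^2 \le e^{2kV_{K,Q}(z)}\sup_K\big(|s|^2 e^{-2kQ}\big) \le e^{2kV_{K,Q}(z)}\,\big(\sup_K B_k\big)\,\|s\|^2_{L^2(\mu,kQ)}$, using the extremal-function inequality on $K$ and then the definition of $B_k$ on $K$. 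Dividing by $\|s\|^2_{L^2(\mu,kQ)}$ and taking logs, then invoking Theorem~\ref{the-strongBMintroCn} which gives $\sup_K B_k \le Ck^{N}$ for an explicit exponent $N$, yields the one-sided bound with error $O(\log k/k)$. Since $(K,Q)$ is regular, $V_{K,Q}=V^*_{K,Q}$ and the right-hand side is exactly $V_{K,Q}+O(\log k/k)$.

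Step 2: prove the lower estimate $\frac{1}{2k}\log\tilde B_k(z) \ge V_{K,Q}(z) - C\frac{\log k}{k}$. Here I would use a quantitative polynomial approximation of the extremal function. By the Siciak--Zakharyuta description, $V_{K,Q}=\sup\frac{1}{k}\log|s_k|$ over $s_k\in\mathcal{P}_k$ with $|s_k|e^{-kQ}\le 1$ on $K$; for the quantitative rate one takes, for each $z_0$, a polynomial $s_k$ with $\frac1k\log|s_k(z_0)|\ge V_{K,Q}(z_0)-C\log k/k$ and $\|s_k e^{-kQ}\|_{L^\infty(K)}\le 1$ (such a rate of approximation, polynomial in $k$, is classical for regular $(K,Q)$; one can also derive it from the bounds in this paper applied in the reverse direction). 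Then $\|s_k\|^2_{L^2(\mu,kQ)} \le \mu(K) \le C$, so $\tilde B_k(z_0) \ge |s_k(z_0)|^2/\|s_k\|^2_{L^2(\mu,kQ)} \ge C^{-1}|s_k(z_0)|^2$, and taking logarithms gives the claim. Finally, combining Steps 1 and 2 and taking the supremum over $z\in\C^n$ (both sides are finite since $\tilde B_k$ and $V_{K,Q}$ have the same logarithmic growth at infinity, namely $\sim k\log|z|$, so the difference extends continuously to a neighborhood of infinity in $\P^n$) yields $\|\frac{1}{2k}\log\tilde B_k - V_{K,Q}\|_{\mathcal{C}^0(\C^n)} \le C\log k/k$.

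The main obstacle is Step 2, specifically obtaining a \emph{polynomial-in-$k$} rate for approximating $V_{K,Q}$ from below by normalized logarithms of polynomials; mere regularity of $(K,Q)$ gives $o(1)$ but not a rate, so one needs the fine regularity of the extremal envelope established in this paper (H\"older or Lipschitz control of $V_{K,Q}$ near $K$, and a Bernstein--Walsh inequality with a polynomial error term) to convert the qualitative statement into the stated $O(\log k/k)$ bound. A secondary technical point is handling the behaviour at infinity to make the $\mathcal{C}^0(\C^n)$ norm meaningful, which is routine given that both functions lie in $\mathcal{L}(\C^n)$ with the same leading term.
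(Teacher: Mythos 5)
Your plan is essentially the paper's own argument: Step~1 matches the paper's use of the Bernstein--Walsh inequality together with the polynomial bound $\sup_K B_k\lesssim k^N$ from Theorem~\ref{the-strongBMintroCn}, and Step~2 matches the paper's lower bound obtained by testing with polynomials realizing the envelope $\phi_{K,k}$ (the term you call the Siciak--Zakharyuta approximation). The one spot where your write-up is a bit optimistic is the claim that a polynomial-in-$k$ rate for $\|\phi_{K,k}-V_{K,Q}\|_{\mathcal{C}^0}$ is ``classical for regular $(K,Q)$'': mere regularity gives only uniform convergence (Lemma~\ref{le-hoitudeuenvelop}) with no rate, and the paper needs the specific geometric input of Proposition~\ref{pro-hoituC0phiKkphiK} — obtained by extending $\phi$ to all of $X$ and importing the rate $O(\log k/k)$ for $K=X$ from Dinh--Ma--Marinescu together with the comparison $\phi_K=\tilde\phi_X$, $\phi_{K,k}\ge\tilde\phi_{X,k}$ — so this is precisely the nontrivial ingredient rather than a routine fact, as you anticipate at the end.
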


Finally  we note that we also obtain a version of 
(Bernstein-)Markov inequality for maximally totally real 
submanifolds which might be useful elsewhere; 
see Theorem \ref{the-Bernstein-totally-real} below.\\

\noindent
\textbf{Zeros of random polynomials.} 
We give an application 
of the above results to the study of equidistribution of zeros of random polynomials. 

Let $K$ be a non-pluripolar set in $\C^n$ and let $\mu$ 
be a probability measure on $\C^n$ such that the support of $\mu$ is contained in $K$ and is non-pluripolar. 
Let $Q$ be a continuous weight on $K$.  Let $\mathcal{P}_k(K)$ 
be the space of restrictions of complex polynomials of degree at most $k$ 
in $\C^n$ to $K$. Let $d_k:= \dim \mathcal{P}_k(K)$, 
and let  $p_1,\ldots, p_{d_k}$ be an orthonormal basis of 
$\mathcal{P}_k(K)$ with respect to the $L^2(\mu,k Q)$-scalar product.  
Consider the random polynomial
\begin{align}\label{eq-randompoly}
p:= \sum_{j=1}^{d_k} \alpha_j p_j,
\end{align}
where $\alpha_j$'s are complex i.i.d. random variables. 
The study of zeros of random polynomials has a long history. 
The most classical example may be the Kac polynomial where 
$n=1$, and $p_j=z^j$.

The distribution of zeros of more general random polynomials associated to  orthonormal polynomials (as in (\ref{eq-randompoly})) was considered in \cite{Shiffman-Zelditch} by observing that  $1, z, \ldots, z^k$ are an orthonormal basis of the restriction of the space of polynomials in $\C$ to $\S^1$ with respect to the $L^2$-norm induced by the Haar measure $\mu_0$ on $\S^1$. In this setting, the necessary and sufficient conditions for the distribution of $\alpha_j$'s so that the zeros of $p$ is equidistributed almost surely or in probability with respect to the Lebesgue measure $\mu_0$ on the unit circle as $k \to \infty$ are known; see \cite{Bloom-Dauvergne,Dauvergne,Hammersley,Ibragimov-Zaporozhets}.

There are many works (in one or higher dimension) following \cite{Shiffman-Zelditch}, to cite just a few, \cite{Bloom,Bloom-Levenberg-random,Bayraktar-mass-equi,Bayraktar-zeros-indiana,Bayraktar-Coman-Mariescu}. 
In all of these works, it seems to us that the question of large deviation type estimates for the equidistribution of zeros of random polynomials has not been investigated in details. As it will be clear in our proof below, the new ingredient needed for such an estimate is an quantitative rate of convergence between $1/(2k) \log \tilde{B}_k$ and the extremal function associated to $K$. This is what we obtained in Theorem \ref{th-xapxiphiKBkCn}. To state our result we need some hypothesis on $\mu$ and the distribution of $\alpha_j$'s.

Assume now that the distribution of $\alpha_j$'s is $f \Leb_\C$, where $f$ is a nonnegative bounded Borel function on $\C$ satisfying the following mild regularities:
\begin{align}\label{ine-BLdk}
\int_{|z|> r} |f| \Leb_\C \le C/r^{2}
\end{align}   
for some constant $C>0$ independent of $r$. This condition was introduced in \cite{Bloom,Bloom-Levenberg-random}. We want to study the distribution of zeros of $p$ as $k \to \infty$. We denote by $[p=0]$ the current of integration along the zero divisor $\{p=0\}$ of $p$. Note that if $n=1$, then $[p=0]$ is the  sum of Dirac masses at zeros of $p$. 

If $(K,Q, \mu)$ is Bernstein-Markov, it was proved in \cite[Theorem 4.2]{Bloom-Levenberg-random}, that  almost surely
\begin{align}\label{converg-zero}
k^{-1}[p=0] \to \ddc \log |V_{K,Q}^*|
\end{align}
as $k \to \infty$, where the convergence is the weak one  between currents. In other words, for every smooth form $\Phi$ of degree $(2n-2)$ with compact support in $\C^n$, one has 
$$k^{-1}\int_{\{p=0\}} \Phi \to \int_{\C^n} \ddc \log |V_{K,Q}^*| \wedge \Phi$$
as $k \to \infty$.   Theorem \ref{the-BMpropertycn} above thus provides us a large class of measures for which the equidistribution of zeros of $p$ holds. 

Our goal now is to obtain a rate of convergence in (\ref{converg-zero}). To this end, it is reasonable to ask for  finer regularity on $\mu$ and of the distribution of $\alpha_j$. We don't try to make the most optimal condition.  Here is our hypothesis:

(H1) $|f(z)| \le |z|^{-3}$ for $|z|$ sufficiently large.  

(H2) let $K$ be a non-degenerate $\mathcal{C}^5$ piecewise-smooth generic Cauchy-Riemann submanifold of $\C^n$, and $Q$ be a H\"older continuous function on $K$. Let $\mu= \rho \Leb_K$, where $\rho^{-\lambda} \in L^1(\Leb_K)$ for some constant $\lambda>0$.

The condition (H1) ensures that (\ref{ine-BLdk}) holds, and the joint-distribution of $\alpha_1,\ldots, \alpha_{d_k}$ is dominated by the Fubini-Study volume form on $\C^{d_k}$ (by definition the Fubini-Study volume form is equal to $\omega_{FS}^{d_k}$, where $\omega_{FS}$ is the Fubini-Study form on $\P^{d_k} \supset \C^{d_k}$). Clearly the Gaussian random variables satisfy this condition.

The condition (H2) is a natural generalization of the classical setting with Kac polynomials where $K$ is the unit circle in $\C$. Indeed in \cite{Shiffman-Zelditch} the authors considered the setting in which $\mu$ is the surface area on a closed analytic curve in $\C$ bounding  a simply connected domain $\Omega$ in $\C$ or $\mu$ is the restriction of the Lebesgue measure on $\C$ to $\Omega$. This setting is relevant to the theory of random matrix theory as already pointed out in \cite{Shiffman-Zelditch}. We refer to \cite{Bloom-Dauvergne,Pritsker-Ramachandran,Pritsker-Ramachandran2} for partial generalizations (without quantitative estimates) to domains with smooth boundary in $\C$. We would like to mention also that in some cases,  certain large deviation type estimates for random polynomials in dimension one were known; see \cite[Theorem 10]{Goetze-Jalowy} for polynomial error terms, and \cite[Theorem 1.1]{Dinh-random}, and \cite[Theorem 3.10]{DV_random} for exponential error terms. 

To our best knowledge there has been no quantitative generalization of results in \cite{Shiffman-Zelditch} to higher dimension. It was commented in the last paper that their method seems to have no simple generalization to the case of higher dimension.

We now recall distances on the space of currents. For every constant $\beta  \ge 0$, and $T,S$ closed positive currents of bi-degree $(m,m)$ on  the complex projective space $\P^n$, define
$$\dist_{-\beta}(T,S):= \sup_{\Phi:\, \|\Phi\|_{\mathcal{C}^{[\beta], \beta- [\beta]}} \le 1} |\langle T-S, \Phi \rangle|,$$  
where $[\beta]$ denotes the greatest integer less than or equal to $\beta$, and $\Phi$ is a smooth form of degree $(2n-m)$ on $\P^n$. 

It is a standard fact that the distance $\dist_{-\beta}$ for $\beta>0$ induces the weak topology on the space of closed positive currents (see for example \cite[Proposition 2.1.4]{DinhSibony_Pk_superpotential}). We have the following interpolation inequality: for $0< \beta_1 \le \beta_2$, there is a constant $c_{\beta_1,\beta_2}$ such that
\begin{align} \label{ine-interpolar}
\dist_{-\beta_2}  \le \dist_{-\beta_1} \le c_{\beta_1,\beta_2} [\dist_{-\beta_2}]^{\beta_1/\beta_2};
\end{align} 
see \cite[Lemma 2.1.2]{DinhSibony_Pk_superpotential} or \cite{Lunardi-book-interpolation,Triebel}.

Note that the currents $[p=0]$ and $\ddc V_{K,Q}$ extends trivially through the hyperplane at infinity $\P^n \backslash \C^n$ to be closed positive currents of bi-degree $(1,1)$ on $\P^n$ (this is due to the correspondence (\ref{corres-Leleongclass}) above).  Hence one can consider $\dist_{-\beta}$ between $k^{-1}[p=0]$ and $\ddc V_{K,Q}$ as closed positive currents on $\P^n$.

\begin{theorem}\label{the-zeros} (A large deviation type estimate) Let $M \ge 1$ be a constant. Assume that (H1) and (H2) are satisfied. Then there exists a constant $C_M>0$ so that  
\begin{align} \label{ine-largedevirationSZ}
\cali{P}_k\bigg \{(\alpha_1, \ldots, \alpha_{d_k}) \in \C^{d_k}: \dist_{-2}\big(k^{-1}[p=0], \ddc V_{K,Q}\big) \ge \frac{C_M\log k}{k} \bigg\} \le C_M k^{-M},
\end{align}
for every $k$, where $\cali{P}_k$ denotes the joint-distribution of $\alpha_1,\ldots, \alpha_{d_k}$. 
\end{theorem}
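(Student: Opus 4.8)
The plan is to transfer the estimate to a uniform $L^1$-bound for a global potential on $\P^n$ and then establish that bound with the required probability by a Chernoff (exponential moment) argument; the only deterministic ingredient is the quantitative comparison $\big\|\tfrac{1}{2k}\log\tilde B_k-V_{K,Q}\big\|_{\mathcal{C}^0(\C^n)}\le C\log k/k$ from Theorem~\ref{th-xapxiphiKBkCn}, which applies because (H2) is precisely the hypothesis of Theorems~\ref{the-strongBMintroCn} and \ref{th-xapxiphiKBkCn}. First I would fix a smooth volume form $d\vol$ on $\P^n$ and note that $\cali{P}_k$-almost surely $\deg p=k$ (the set where $\deg p<k$ is a proper linear subspace of $\C^{d_k}$, hence Lebesgue-null), so that $k^{-1}[p=0]$ and $\ddc V_{K,Q}$ are closed positive $(1,1)$-currents on $\P^n$ in the class of a hyperplane. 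Writing $\tilde B_k=\sum_{j=1}^{d_k}|p_j|^2\ge\big(\int_K e^{-2kQ}\,d\mu\big)^{-1}>0$ and $g:=p\,\tilde B_k^{-1/2}$, on $\C^n$ one has
\[
\psi_k:=\frac1k\log|p|-V_{K,Q}=\frac1k\log|g|+\eta_k,\qquad \eta_k:=\frac1{2k}\log\tilde B_k-V_{K,Q},
\]
with $\ddc\psi_k=k^{-1}[p=0]-\ddc V_{K,Q}$ by Poincar\'e--Lelong. Since $V_{K,Q}$ and $\frac1{2k}\log\tilde B_k$ grow like $\log|z|$ at infinity, $\eta_k$ extends to $\P^n$ with $\|\eta_k\|_{L^\infty(\P^n)}\le C\log k/k$ by Theorem~\ref{th-xapxiphiKBkCn}, and $\psi_k$, a difference of quasi-psh functions, lies in $L^1(\P^n)$. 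As $\dist_{-2}$ is tested against $(2n-2)$-forms $\Phi$ with $\|\Phi\|_{\mathcal{C}^2}\le1$, integration by parts then gives
\[
\dist_{-2}\big(k^{-1}[p=0],\ddc V_{K,Q}\big)=\sup_\Phi\big|\langle\psi_k,\ddc\Phi\rangle\big|\le C\|\psi_k\|_{L^1(\P^n)}\le\frac Ck\big\|\log|g|\big\|_{L^1(\P^n)}+\frac{C\log k}{k},
\]
so it suffices to prove $\big\|\log|g|\big\|_{L^1(\P^n)}\le C_M\log k$ with $\cali{P}_k$-probability at least $1-C_Mk^{-M}$.

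For the part where $\log|g|>0$, Cauchy--Schwarz gives $|g|\le|\alpha|$ on $\C^n$ with $\alpha=(\alpha_1,\dots,\alpha_{d_k})$, so $(\log|g|)^+\le\log^+|\alpha|$; by (H1) the $\alpha_j$ have polynomially decaying tails, hence $\cali{P}_k\big(\max_j|\alpha_j|>k^A\big)\le Cd_kk^{-A}\le k^{-M}$ for $A=A(M)$ large, and on the complementary event $\int_{\P^n}(\log|g|)^+\,d\vol\le C_M\log k$. The real work is the part where $\log|g|<0$. Here I would write $g(z)=\langle\alpha,u\rangle$ for the unit vector $u=u(z):=(p_1(z),\dots,p_{d_k}(z))\,\tilde B_k(z)^{-1/2}\in\C^{d_k}$; since the common density $f$ of the $\alpha_j$ is bounded, conditioning on all coordinates of $\alpha$ but one index $j$ with $|u_j|^2\ge d_k^{-1}$ shows that $g(z)$ has conditional density $\le Cd_k$, whence the small-ball estimate $\cali{P}_k\big(|g(z)|<\epsilon\big)\le Cd_k\epsilon^2$ for every fixed $z\in\C^n$.

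This yields, uniformly in $z$,
\[
\E\big[\exp\big((\log|g(z)|)^-\big)\big]=1+\int_1^\infty\cali{P}_k\big(|g(z)|<\lambda^{-1}\big)\,d\lambda\le1+\int_1^\infty\min\!\big(1,Cd_k\lambda^{-2}\big)\,d\lambda\le C\sqrt{d_k}\le Ck^{n/2},
\]
where $\E$ is the $\cali{P}_k$-expectation. Applying Jensen with the probability measure $d\vol/\vol(\P^n)$ and then Tonelli gives $\E\big[\exp\big(\vol(\P^n)^{-1}\!\int_{\P^n}(\log|g|)^-\,d\vol\big)\big]\le Ck^{n/2}$, and Markov's inequality upgrades this to $\cali{P}_k\big(\int_{\P^n}(\log|g|)^-\,d\vol>C_M\log k\big)\le k^{-M}$ for suitable $C_M$. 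Combining with the positive part and then replacing $M$ by $M+1$ to absorb the factor $2$ (enlarging $C_M$ to cover small $k$) yields (\ref{ine-largedevirationSZ}).

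I expect the substantive difficulty to lie in two places. The conceptual one is the exponential-moment step: a first-moment estimate ($\E\big\|\log|g|\big\|_{L^1}\le C\log k$ together with Markov) only gives probability $O(1/k)$, far from $O(k^{-M})$, so one is forced to control an exponential moment, which is exactly what the uniform small-ball bound for $g(z)$ makes possible. The more technical point is the passage from $\C^n$ to $\P^n$---verifying that $\psi_k$ and $\eta_k$ extend across the hyperplane at infinity with the asserted bounds (using $\deg p=k$ a.s., the logarithmic growth of $V_{K,Q}$ and $\frac{1}{2k}\log\tilde B_k$, and the continuity of $V_{K,Q}$ from local regularity, Theorem~\ref{the-BMpropertylocallyreguCn})---together with keeping all polynomial-in-$k$ constants, notably $d_k\le Ck^n$, uniform.
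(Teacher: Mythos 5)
Your proof is correct, and it takes a genuinely different route from the paper. Both approaches first use Theorem~\ref{th-xapxiphiKBkCn} to reduce matters to a uniform $L^1$-bound (against a fixed smooth volume form, or against $[L]$ in the paper's version) for the quasi-psh potential $\psi_k=k^{-1}\log|p|-V_{K,Q}$, but the probabilistic core is entirely different. The paper lifts the quantity of interest to a function $\varphi(a^{(d_k)})=\int_L\log\frac{|\sum a_jp_j(z)|}{(\|a\|^2+1)^{1/2}\|p^{(d_k)}(z)\|}\,\omega_{FS,n}^m$ of the coefficient vector $a^{(d_k)}$, observes that it is $\omega_{FS,d_k}$-psh on $\P^{d_k}$, and then invokes the Dinh--Sibony \emph{dimension-free} capacity and sublevel-set estimate (Lemma~\ref{le-maxkhithetich}) together with Lemma~\ref{le-uocluongtichphancuavarpgiNk} to obtain an exponential tail for $\varphi$ with respect to $\Omega_{FS,d_k}$, which is then transferred to $\cali{P}_k$ using the domination $\cali{P}_k\le C_0\,\Omega_{FS,d_k}$. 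You instead fix the spatial point $z$, identify $g(z)=\langle\alpha,u(z)\rangle$ with $\|u(z)\|=1$, and prove the small-ball (L\'evy anti-concentration) estimate $\cali{P}_k(|g(z)|<\epsilon)\le Cd_k\epsilon^2$ by conditioning on a coordinate with $|u_j|^2\ge d_k^{-1}$ and using boundedness of the one-variable density $f$; this gives the exponential moment $\E\exp((\log|g(z)|)^-)\le C\sqrt{d_k}$ uniformly in $z$, and Jensen, Tonelli and Markov then produce the $O(\log k)$ bound for $\int(\log|g|)^-$ with probability $1-O(k^{-M})$. What each buys: the paper's route is a single clean pluripotential-theoretic estimate on $\P^{d_k}$ and simultaneously yields the restriction-to-$L$ version (Theorem~\ref{the-zerosgiaoL}); your route is more elementary probabilistically, makes completely transparent where the bounded density of $f$ enters, and notably does \emph{not} need the global domination $\cali{P}_k\le C_0\,\Omega_{FS,d_k}$ on $\C^{d_k}$ (only boundedness of $f$ for the negative part and the tail bound \eqref{ine-BLdk} for the positive part), so your hypotheses are cleanly localized. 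Two minor polishings: in the positive-part tail estimate the exponent from \eqref{ine-BLdk} is $k^{-2A}$ rather than $k^{-A}$ (harmless, one just takes $A$ larger), and when you write the final Markov step you should record that the exponent one actually takes is $t=(M+n/2+1)\log k$, so $C_M$ absorbs both $\vol(\P^n)$ and the extra $n/2+1$; as written this is implicit but worth making explicit since the whole point of the exponential-moment step, as you correctly emphasize, is to beat the linear-in-$k^{-1}$ decay that a first-moment Markov argument would give.
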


By (\ref{ine-interpolar}), one obtains similar estimates for $\dist_{-\beta}$ with $0< \beta \le 2$ as in Theorem \ref{the-zeros}.  We don't know if the right-hand side of (\ref{ine-largedevirationSZ}) can be improved.  We state now a direct consequence of Theorem \ref{the-zeros} which gives a higher dimensional generalization of \cite[Theorems 1 and 2]{Shiffman-Zelditch} (except that we only obtain the error term $O(\frac{\log k}{k})$ instead of $O(k^{-1})$); see also Theorem \ref{the-zerosgiaoL} below. Denote by $\E_k(k^{-1}[p=0])$ the expectation of the  random normalized currents of zeros $k^{-1}[p=0]$. 

\begin{corollary}\label{cor-expectedzeros}  
Assume that (H1) and (H2) are satisfied. Then we have 
\begin{align} \label{eq-corexpeczeros}
\E_k\big(k^{-1}[p=0]\big) = 
\ddc V_{K,Q}+ O\left(\frac{\log k}{k}\right),
\end{align}
where $O(\frac{\log k}{k})$ denotes a current $S_k$ 
of order $0$ in $\C^n$ such that for every smooth form $\Phi$ 
of degree $(2n-m)$ with compact support in $\C^n$ 
such that $\|\Phi\|_{\mathcal{C}^2} \le 1$, we have 
$$| \langle S_k, \Phi \rangle | \le C \frac{\log k}{k}\,,$$
for some constant $C$ independent of $k, \Phi$. 
\end{corollary}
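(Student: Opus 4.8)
The plan is to derive Corollary~\ref{cor-expectedzeros} from Theorem~\ref{the-zeros} by a standard integration-of-tail argument, combined with the uniform upper bound on $k^{-1}[p=0]$ coming from the mass control at infinity. First I would fix a smooth test form $\Phi$ of degree $(2n-2)$ with compact support in $\C^n$ and $\|\Phi\|_{\mathcal{C}^2}\le 1$, and set $X_k := \langle k^{-1}[p=0]-\ddc V_{K,Q},\Phi\rangle$. Since $\dist_{-2}$ is defined as a supremum over exactly such test forms, Theorem~\ref{the-zeros} says that $|X_k| \ge C_M \frac{\log k}{k}$ with probability at most $C_M k^{-M}$. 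The goal is to show $|\E_k(X_k)| \le C\frac{\log k}{k}$ with $C$ independent of $k$ and $\Phi$, which is precisely the assertion \eqref{eq-corexpeczeros}.

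The estimate $\E_k|X_k| \le \E_k(|X_k|\,\mathbf 1_{|X_k|\le C_M\log k/k}) + \E_k(|X_k|\,\mathbf 1_{|X_k|> C_M\log k/k})$ splits into a ``good'' term, bounded trivially by $C_M\frac{\log k}{k}$, and a ``bad'' term on an event of probability $\le C_M k^{-M}$. For the bad term I need an a priori pointwise (deterministic) bound on $|X_k|$, or at least a bound on $\E_k(|X_k|^2)$ or $\E_k(|X_k|)$, so that Cauchy--Schwarz or Hölder kills the small-probability factor: choosing $M$ large enough (e.g. $M \ge 2$) then makes the bad term $\le C_M k^{-M}\cdot(\text{poly in }k) = O(k^{-1})$. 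The needed a priori bound is that the mass of $k^{-1}[p=0]$ on a fixed large ball $\B(0,R)\supset \supp\Phi$ is bounded by a fixed constant (independent of $k$ and of the $\alpha_j$): this follows because $[p=0]$ extends to a closed positive $(1,1)$-current on $\P^n$ of the same cohomology class as $\{p=0\}$, whose mass is $\deg p \le k$, so $\|k^{-1}[p=0]\|_{\B(0,R)} \le C_R$ uniformly. Hence $|X_k| \le C_R\|\Phi\|_{\mathcal C^0} + \|\ddc V_{K,Q}\|_{\B(0,R)} \le C$ deterministically, and the bad term is $\le C\cdot C_M k^{-M} = O(\frac{\log k}{k})$ for $M\ge 1$. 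Combining the two terms gives $|\E_k(X_k)| \le \E_k|X_k| \le C\frac{\log k}{k}$, uniformly in $\Phi$, which is the claim.

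The main obstacle is ensuring the uniform (in $k$ and in the random coefficients) bound on the local mass of $[p=0]$, together with the fact that it holds even on the low-probability event; this hinges on the potential-theoretic fact — used implicitly already in \cite{Bloom-Levenberg-random} and in the correspondence labeled (\ref{corres-Leleongclass}) in the excerpt — that $\frac1{2}\log(|p|^2 e^{-2kQ})$, or rather $\frac1{2}\log|p|^2$ away from $K$, has Lelong class at most $k$ and so its $\ddc$ has mass $\le k$ on any compact set. One subtlety worth a sentence: $\E_k(|X_k|)$ must itself be finite, which is clear since $\log|p|\in L^1_{\rm loc}$ for every polynomial $p\not\equiv 0$ and the set $\{p\equiv 0\}$ has $\cali P_k$-measure zero under (H1). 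No new hard analysis is required beyond Theorem~\ref{the-zeros}; the corollary is a soft tail-integration consequence, and I would present it as such in two short paragraphs.
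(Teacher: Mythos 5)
Your argument is correct and is precisely the tail-integration that the authors intend when they call Corollary~\ref{cor-expectedzeros} a ``direct consequence'' of Theorem~\ref{the-zeros} (the paper gives no further detail). You identify the two ingredients that make the passage from a large-deviation bound to a bound on expectations work: the interpretation of $\dist_{-2}$ as a supremum over exactly the test forms appearing in the statement, and a deterministic a priori bound $|X_k|\le C$ on the support of $\Phi$, coming from the fact that $k^{-1}[p=0]$ extends to a closed positive $(1,1)$-current on $\P^n$ of mass $\deg p/k\le 1$ and hence has uniformly bounded trace mass on any fixed compact of $\C^n$; together with the observation that $p\equiv 0$ is a $\cali P_k$-null event under (H1), this makes $\E_k(X_k)$ well defined and finite, and the split into the event $\{|X_k|\le C_M\log k/k\}$ and its complement of probability $\le C_M k^{-M}$ finishes the proof for any $M\ge 1$. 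There is no gap, and no genuinely different route is being taken.
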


Even when the $\alpha_j$'s are Gaussian variables, 
we underline that the decay obtained in \cite{Shiffman-Zelditch} 
is only $O(k^{-1})$, this error term is  optimal in dimension 1 
(one can see it by a careful examination of computations in
\cite[Proposition 3.3]{Shiffman-Zelditch}). 

Since zeros of random polynomials in higher dimension form no 
longer a discrete set, one might be somehow not at ease to speak 
of questions like correlation of zeros.  To remedy this problem, 
one can reformulate the equidistribution of zeros of random polynomials 
in the following way. Let $L$ be a  complex line in $\C^n$ or an (complex) 
algebraic curve  in $\C^n$.  Sine generic polynomials 
intersect transversely $L$, almost surely the number of intersection 
points (without counting multiplicities) of the random hypersurface 
$\{p=0\}$ and $L$ is exactly $k\deg L$ by Bezout's theorem. Define 
$$\mu_{k,L}:= \frac{1}{k \deg L}\sum_{j=1}^{k \deg L} \delta_{z_j},$$
where $z_1, \ldots, z_{k \deg L}$ are zeros of $p$ on $L$. 
Let $[L]$ be the current of integration along $L$. 
Since $V_{K,Q}$ is bounded, the product 
$$\mu_L:= \frac{1}{\deg L}\ddc V_{K,Q} \wedge [L]$$
 is  a well-defined measure supported on $L$ 
 (it is simply $\ddc (V_{K,Q}|_{L})$ if $L$ is smooth). 

\begin{theorem}\label{the-zerosgiaoL} Let $M \ge 1$ be a constant.  Assume that (H1) and (H2) 
are satisfied. Then there exists a constant $C_M>0$ so that  
$$\cali{P}_k\bigg \{(\alpha_1, \ldots, \alpha_{d_k}) \in \C^{d_k}:
\dist_{-2}\big(\mu_{k,L}, \mu_L \big) \ge \frac{C_M\log k}{k}  \bigg\} 
\le C_M k^{-M},$$
for every $k$. 
In particular, the measure $\mu_{k,L}$ converges weakly to 
$\mu_L$ as $k \to \infty$. 
\end{theorem}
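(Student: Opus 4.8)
The plan is to deduce this from Theorem \ref{the-zeros} by slicing the currents with the fixed curve $L$ and invoking the continuity of the slicing operation in the relevant negative Sobolev (or $\Cc^{2}$-dual) norm. First I would recall that both $k^{-1}[p=0]$ and $\ddc V_{K,Q}$ extend trivially to closed positive $(1,1)$-currents on $\P^n$ with uniformly bounded mass (the mass of $k^{-1}[p=0]$ is $1$ after normalization, and $\ddc V_{K,Q}$ is a fixed current), so that intersecting with $[L]$ makes sense: $\mu_{k,L}=\frac{1}{k\deg L}[p=0]\wedge[L]$ (when $\{p=0\}$ meets $L$ transversely, which happens almost surely, this wedge product is the sum of Dirac masses $\sum\delta_{z_j}$, matching the definition), and $\mu_L=\frac{1}{\deg L}\ddc V_{K,Q}\wedge[L]$ is well defined because $V_{K,Q}$ is bounded (Bedford--Taylor theory).

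The key analytic input is a quantitative bound of the form
\begin{align*}
\dist_{-2}\big(T_1\wedge[L],\, T_2\wedge[L]\big) \le C_L\,\dist_{-\beta_0}(T_1,T_2)^{\gamma}
\end{align*}
valid for closed positive $(1,1)$-currents $T_1,T_2$ of bounded mass on $\P^n$ that extend across the hyperplane at infinity, for suitable constants $\beta_0>0$, $\gamma\in(0,1]$ and $C_L>0$ depending only on $L$ (and the mass bound). Morally this says that the slicing $T\mapsto T\wedge[L]$ is Hölder continuous from a negative-smoothness space into another. One way I would obtain this is to write $T_i\wedge[L]$ against a test form $\Phi$ of class $\Cc^2$ supported away from infinity as $\langle T_i, \Phi\wedge\theta_L\rangle$ up to controlled error, where $\theta_L$ is a (locally defined) closed positive current representing $[L]$ with a potential of bounded Hölder regularity; then regularize $[L]$ by a smooth approximation $[L]_\varepsilon$, estimate $|\langle T_i,\Phi\wedge([L]-[L]_\varepsilon)\rangle|$ using the bounded mass of $T_i$ and the modulus of continuity of a potential of $[L]$, and estimate $|\langle T_1-T_2,\Phi\wedge[L]_\varepsilon\rangle|\le \|\Phi\wedge[L]_\varepsilon\|_{\Cc^{\beta_0}}\,\dist_{-\beta_0}(T_1,T_2)$ with $\|\Phi\wedge[L]_\varepsilon\|_{\Cc^{\beta_0}}=O(\varepsilon^{-c})$; optimizing in $\varepsilon$ gives the Hölder estimate. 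Applying this with $T_1=k^{-1}[p=0]$, $T_2=\ddc V_{K,Q}$, and combining with the interpolation inequality (\ref{ine-interpolar}) if necessary to match exponents, we get
\begin{align*}
\dist_{-2}\big(\mu_{k,L},\mu_L\big) = \frac{1}{\deg L}\dist_{-2}\big(k^{-1}[p=0]\wedge[L],\ddc V_{K,Q}\wedge[L]\big) \le C_L\,\dist_{-2}\big(k^{-1}[p=0],\ddc V_{K,Q}\big)^{\gamma}
\end{align*}
(after absorbing the comparison of norms). Hence the event $\{\dist_{-2}(\mu_{k,L},\mu_L)\ge C_M'\frac{\log k}{k}\}$ is contained, for an appropriate choice of $C_M'$ and up to replacing $M$ by a larger exponent, in the event $\{\dist_{-2}(k^{-1}[p=0],\ddc V_{K,Q})\ge C_M\frac{(\log k)}{k}\}$, whose probability is $\le C_M k^{-M}$ by Theorem \ref{the-zeros}; a routine rescaling of the constant $C_M$ and the exponent $M$ yields the stated inequality. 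The weak convergence of $\mu_{k,L}$ to $\mu_L$ then follows by Borel--Cantelli applied along the sequence $k\to\infty$ together with the fact that $\dist_{-2}$ metrizes the weak topology on measures of bounded mass.

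The main obstacle I anticipate is establishing the Hölder continuity of the slicing map $T\mapsto T\wedge[L]$ in negative norms with constants that are uniform over the (growing-degree) hypersurfaces $\{p=0\}$ — the point being that only the \emph{normalized} mass is controlled, and one must be careful that the wedge product with $[L]$ is the genuine Bedford--Taylor intersection and not merely a formal pairing, especially near points where $\{p=0\}$ is tangent to $L$ or where the trivial extension across infinity interacts with $L\cap(\P^n\setminus\C^n)$. Handling the behaviour at infinity cleanly (e.g. by first compactifying and checking that $L$, or its closure in $\P^n$, meets the hyperplane at infinity in a way that does not create extra mass for the slice) is the delicate bookkeeping step; everything else is soft once Theorem \ref{the-zeros} and standard pluripotential-theoretic continuity of intersection with a fixed current are in hand.
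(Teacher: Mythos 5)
Your proposal tries to derive Theorem \ref{the-zerosgiaoL} from Theorem \ref{the-zeros} by a \emph{post hoc} slicing argument, but the hypothesized estimate
$\dist_{-2}\big(T_1\wedge[L],T_2\wedge[L]\big) \le C_L\,\dist_{-\beta_0}(T_1,T_2)^{\gamma}$
is the crux, and it is false in the required generality. The $\dist_{-2}$ distance on $\P^n$ is essentially the $L^1(\P^n)$ distance between (normalized) potentials, and you want to bound the $L^1(L)$ distance of those same potentials; this is a trace inequality on a positive-codimension subvariety, and it fails for psh functions, which only enjoy $L^1_{\loc}$ / BMO regularity. Concretely, one can have $k^{-1}\log|p_k|\to V_{K,Q}$ in $L^1(\P^n)$ while $k^{-1}\log|p_k|$ tends to $-\infty$ on a set of positive measure of $L$ (e.g.\ if $\{p_k=0\}$ tracks $L$ closely), so the slice $k^{-1}[p_k=0]\wedge[L]$ need not converge to $\ddc V_{K,Q}\wedge[L]$. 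No amount of regularizing $[L]$ and optimizing $\varepsilon$ repairs this, because the error term $|\langle T_i,\Phi\wedge([L]-[L]_\varepsilon)\rangle|$ is controlled only by the mass of $T_i$, which gives no decay as $\varepsilon\to 0$ without extra regularity of the potential of $T_i$ near $L$ --- precisely what you don't have uniformly in $k$.

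The paper's actual argument sidesteps this entirely: it never deduces one theorem from the other. Instead, the function $\varphi(a^{(d_k)})$ is defined \emph{from the outset} as an integral over $L$ (with the same formula giving Theorem \ref{the-zeros} when $L=\P^n$), and the probabilistic machinery (Lemma \ref{le-maxkhithetich} applied to the $\omega_{FS,d_k}$-psh function $\varphi$, plus Lemma \ref{le-giongawn}) yields, off an exceptional set of probability $O(d_k^{-3M})$, the direct bound
$\int_L |k^{-1}\log|p| - V_{K,Q}|\,\omega_{FS,n}^m \lesssim \frac{\log k}{k}$.
The $\dist_{-2}$ estimate for the slices then follows by one integration by parts, $\langle \mu_{k,L}-\mu_L,\Phi\rangle=\frac{1}{\deg L}\int_L (k^{-1}\log|p|-V_{K,Q})\,\ddc\Phi$. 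Thus the $L^1(L)$ control of the potential difference is obtained directly, at the same probabilistic cost as for $L=\P^n$, rather than inferred from the ambient current estimate. Your softer points --- that $[p=0]\wedge[L]$ is the sum of Dirac masses almost surely, that $\mu_L$ is well defined since $V_{K,Q}$ is bounded, and the Borel--Cantelli conclusion --- are fine; the missing piece is the uniform $L^1(L)$ bound on the potentials, which must be proved directly and cannot be deduced abstractly from Theorem \ref{the-zeros}.
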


Now since zeros of $p$ on $L$ is discrete and is equidistributed 
as $k \to \infty$, one can ask as in \cite{Shiffman-Zelditch} 
how zeros of $p$ on $L$ (if scaled appropriately) are correlated. 
Nevertheless such questions seem to be still out of reach in 
the higher dimensional setting. Finally we note that one can 
even consider $L$ to be a transcendental curve in $\C^n$. 
In this case  generic polynomials $p$  still intersect $L$ 
transversely asymptotically (see \cite{VietTuan}); 
the question of equidistribution is however more involving. \\


\noindent
\textbf{Acknowledgement.} We thank Norman Levenberg for many fruitful discussions.

\section{Bergman kernel functions associated to a line bundle}

The results mentioned in the Introduction have their direct generalizations 
in the context of complex geometry where $\C^n$ is replaced 
by a compact K\"ahler manifold. Working in such a generality 
will make the presentation more clear and enlarge the range of applicability of the theory.   
We will now describe the setting. 
    
Let $X$ be a projective manifold of dimension $n$.  
Let $(L,h_0)$ be an ample line bundle equipped with 
a Hermitian metric $h_0$ whose Chern form $\omega$ is positive. 
Let $K$ be a compact non-pluripolar subset in $X$.  
Let $\mu$ be a probability measure on $X$ such that the support 
of $\mu$ is non-pluripolar and is contained in $K$. 
Let $h$ be a Hermitian metric on $L|_{K}$ such that 
$h= e^{-2\phi} h_0$, where $\phi$ is a continuous function on $K$.
For $s_1,s_2 \in H^0(X,L)$, we define
$$\langle s_1, s_2 \rangle:= \int_X \langle s_1, s_2 \rangle_h d \mu.$$
Since $\supp \mu$ is non-pluripolar, the last scalar product defines a norm 
called $L^2(\mu,h)$-norm on $H^0(X,L)$.  Let $k \in \N$. 
We obtain induced Hermitian metric $h^k$ on $L^k$ and a similar norm 
$L^2(\mu, h^k)$ on $H^0(X,L^k)$.  Put $d_k:= \dim H^0(X,L^k)$. 
Let $\{s_1, \ldots, s_{d_k}\}$ be an orthonormal basis of $H^0(X,L^k)$ 
with respect to $L^2(\mu, h^k)$. 
The \emph{Bergman kernel function of order $k$ associated to $(L,h,\mu)$} is 
$$B_k(x):= \sum_{j=1}^{d_k} |s_j(x)|_{h^k}^2=
\sup\big\{ |s(x)|_{h^k}^2: \quad s \in H^0(X,L^k), 
\quad \|s\|_{L^2(\mu,h^k)}=1\big\}$$
for $x \in K$. 

When $\mu$ is a volume form on $X$ and $h=h_0$, 
the Bergman kernel function is an object of great importance 
in complex geometry, for example see \cite{Marinescu-Ma} 
for a comprehensive study.

The setting considered in Introduction corresponds to the case where 
$X= \P^n$ and $(L,h_0)=(\mathcal{O}(1),h_{FS})$ is the 
hyperplane line bundle on $\P^n$ 
endowed with the Fubini-Study metric. We consider $\C^n$ 
as an open subset in $\P^n$ and the weight $Q$ corresponds to 
$\phi - \frac{1}{2}\log (1+ |z|^2)$. Recall that there is a natural 
identification between $\mathcal{L}(\C^n)$ and the set of 
$\omega_{FS}$-psh functions on $\P^n$ (where $\omega_{FS}$ 
denotes the Fubini-Study form on $\P^n$) given by 
\begin{align} \label{corres-Leleongclass}
u \, \longleftrightarrow \, u- \frac{1}{2}\log(1+ |z|^2)
\end{align}
for $u \in \mathcal{L}(\C^n)$. 

Another well-known example is the case where $K$ is the unit sphere in 
$\R^n$ (here $n\ge 2$; see, e.g, \cite{Marzo-OrtegaCerda}) 
and $X$ is the complexification of $K$, \emph{i.e,} 
$K= \S^{n-1} \subset \R^n$ which is considered as usual 
a compact subset of $X:= \{z_0^2+ z_1^2+ \cdots+ z_n^2=1\} \subset \P^n$. The line bundle $L$ on 
$X$ is the restriction of the hyperplane bundle $\mathcal{O}(1)\to\P^n$ to $X$. 
We remark that in this case $H^0(X, L^k)$ is equal to the restriction 
of the space of $H^0(\P^n, \mathcal{O}(k))$ to $X$. 
Hence  the restriction of $H^0(X,L^k)$ to $K$ is that of the space of 
complex polynomials in $\C^n$ to $K$.  
To see this, notice that $X$ is a smooth hypersurface in $\P^n$. 
Consider the standard exact sequence of sheaves:
$$0 \to \mathcal{O}(k- \deg X)\to\mathcal{O}(k)\to 
\mathcal{O}(k)|_X \rightarrow 0,$$
where the second arrow is the multiplication by a section of 
$\mathcal{O}(\deg X)$ whose zero divisor is equal to $X$.  
We thus obtain a long exact sequence of cohomology
spaces: 
\begin{multline*}
0 \to H^0(\P^n,\mathcal{O}(k - \deg X)) \to 
H^0(\P^n,\mathcal{O}(1)) \to  H^0(\P^n,\mathcal{O}(k)|_X)\\
 \to H^1(\P^n,\mathcal{O}( k- \deg X)) \to \cdots
 \end{multline*}
In the last sequence, $H^0(\P^n,\mathcal{O}(k)|_X)$
is exactly equal to $H^0(X,\mathcal{O}(k)|_X)$, and 
$H^1(\P^n,\mathcal{O}(k- \deg X))= 0$
by Kodaira-Nakano vanishing theorem; see \cite[p.\ 156]{Griffiths-Harris}.
As above, the weight $Q$ on $K$ in the spherical model corresponds to 
$\phi- \frac{1}{2}\log (1+ |z|^2)|_X$ in the setting $(K,X, \mathcal{O}(1)|_X)$.


The measure $\mu$ is said to be a \emph{Bernstein-Markov measure} 
(with respect to $(K,\phi,L)$) if  for every $\epsilon>0$ 
there exists $C=C(\epsilon)>0$ such that  
\begin{align}\label{ine-def-BMintro}
\sup_K |s|_{h^k}^2 \le C e^{\epsilon k} \|s\|^2_{L^2(\mu, h^k)}
\end{align}
for every $s \in H^0(X, L^k)$. In other words, the Bergman kernel function 
of order $k$ grows at most subexponentially, \emph{i.e,}  
$\sup_K B_k =O(e^{\epsilon k})$ as $k \to \infty$ for every $\epsilon>0$. 
Theorem \ref{the-BMpropertycn} is a particular case of the following result.

\begin{theorem} \label{the-BMproperty}   
Let $K$ be a compact nondegenerate $\mathcal{C}^5$ piecewise-smooth 
Cauchy-Riemann generic   submanifold  of $X$. 
Then for  every continuous function $\phi$ on $K$, if $\mu$ 
is a finite measure whose support is equal to $K$ such that there 
exist constants $\tau>0, r_0>0$ satisfying 
$\mu(\B(z,r) \cap K) \ge r^\tau$ 
for every $z \in K$, and every $r \le r_0$  (where $\B(z,r)$ 
denotes the ball of radius $r$ centered at $z$ induced by 
a fixed smooth Riemannian metric on $X$), 
then $\mu$ is a Bernstein-Markov measure with respect to $(K,\phi,L)$.
\end{theorem}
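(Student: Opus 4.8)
The plan is to reduce the Bernstein--Markov property to a local pointwise estimate on sections of $L^k$, and then to exploit the geometry of a CR generic piecewise-smooth submanifold via analytic discs partly attached to $K$ together with subharmonicity. First I would fix $\epsilon>0$ and $s\in H^0(X,L^k)$, normalize so that $\|s\|_{L^2(\mu,h^k)}=1$, and pick $x_0\in K$ where $|s|_{h^k}^2$ is (nearly) maximal. Writing $|s|_{h^k}^2 = |f|^2 e^{-2k\varphi}$ locally, where $f$ is holomorphic and $\varphi$ is a smooth local weight (obtained from $h_0$ and the continuous $\phi$, which one may first approximate uniformly by a smooth function at the cost of an error $e^{o(k)}$ absorbed into $e^{\epsilon k}$), the function $u:=\frac1k\log|f|^2 - 2\varphi$ is close to a plurisubharmonic function and satisfies $\sup_K u = \frac1k\log B_k(x_0) + o(1)$. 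The goal is to show $\sup_K u \le \epsilon + \frac1k\log\big(C \int_K e^{ku}\,d\mu\big)$, i.e.\ that the peak of $u$ on $K$ cannot exceed, by more than $\epsilon$, the "$L^1$-average'' of $e^{ku}$ against $\mu$.

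The core geometric input is the construction, near any point $p\in K$ (regular or singular), of a family of small analytic discs $\phi_\zeta:\overline{\mathbb{D}}\to X$ whose boundary $\phi_\zeta(\partial\mathbb{D})$ lies in $K$ (using that $K$ is CR generic, so locally $K$ contains enough real directions that, after a diffeomorphism of a chart, $\Psi(K\cap W_p)$ is a union of polyhedra of dimension $\ge n$, hence totally real pieces to which Bishop-type analytic discs attach) and such that $\phi_\zeta(0)$ ranges over a neighborhood of $p$ in $X$, or at least over a set on which one can control $u$. Pulling back $u$ by such a disc gives a subharmonic function on $\mathbb{D}$ (up to the small error from $\varphi$ not being exactly psh and from the $\mathcal{C}^5$ rather than real-analytic regularity, which only perturbs things by $O(1/k)$ uniformly), so the sub-mean-value inequality bounds $u(\phi_\zeta(0))$ by the average of $u$ over $\phi_\zeta(\partial\mathbb{D})\subset K$. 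Combining this with the lower mass bound $\mu(\mathbb{B}(z,r)\cap K)\ge r^\tau$ — which guarantees that the portion of $\partial\mathbb{D}$ landing in any ball of radius $r$ carries enough $\mu$-mass — one converts the boundary average into an estimate against $\int_K e^{ku}\,d\mu$, using the elementary inequality $\mathrm{average} \le \frac1k\log(\text{integral})$ together with Harnack/Hadamard three-circles type control to spread the estimate from a single disc to a neighborhood of $p$, and then a compactness (finite cover of $K$) argument.

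The main obstacle I expect is the uniformity of the analytic disc construction across both regular and singular points, and its interaction with the merely $\mathcal{C}^5$ (non-analytic) and piecewise-smooth nature of $K$: one must show that the discs can be chosen with radii, boundary-localization, and derivative bounds that are uniform in $p\in K$ and, crucially, independent of $k$, so that all error terms coming from the non-psh correction to $\varphi$, from the curvature of $X$, and from the corners where several polyhedra meet are genuinely $o(1)$ as $k\to\infty$. Handling the singular locus — where $T_{p_m}K$ may have several limit spaces — requires that for each such limit totally real direction there is still a usable family of discs, and that the union of their centers covers a full neighborhood of $p$; this is where the nondegeneracy hypothesis in Definition \ref{def_piecewisesmooth} and the CR genericity are used essentially. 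Once these discs are in hand uniformly, the passage from the local sub-mean-value inequality plus the mass lower bound to the global estimate $\sup_K B_k = O(e^{\epsilon k})$ is, by comparison, routine.
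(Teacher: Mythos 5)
Your plan takes a genuinely different route from the paper, but it has a gap at its central step. The paper does not prove Theorem~\ref{the-BMproperty} by a direct sub-mean-value argument: it first proves local regularity of $K$ (Theorem~\ref{the-BMpropertylocallyregu}), using analytic discs and Lemma~\ref{le_passagetogeneralcase3} to get the uniform bound $u_j(a)\lesssim\dist(a,K)^{1/5}$ for psh functions vanishing on $K$, and then simply invokes the Bloom--Levenberg--Piazzon/Wielonsky criterion (\cite[Proposition~3.4]{Bloom-Levenberg-PW}): local regularity plus $\mu(\B(z,r)\cap K)\ge r^\tau$ gives Bernstein--Markov. The criterion works by a pointwise ``spreading'' mechanism: local regularity gives a Bernstein--Walsh/Markov estimate, hence $|s|^2_{h^k}\ge \tfrac12\sup_K|s|^2_{h^k}$ on a ball $\B(p_0,r_k)\cap K$ with $r_k=e^{-o(k)}$ around a near-maximum point $p_0$, and then $1=\|s\|_{L^2(\mu,h^k)}^2\ge \tfrac12(\sup_K|s|^2_{h^k})\,\mu(\B(p_0,r_k)\cap K)\ge \tfrac12(\sup_K|s|^2_{h^k})\,r_k^\tau$. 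The mass hypothesis is used on a \emph{single} small ball, precisely to bound $\int|s|^2 d\mu$ from below.

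The gap in your plan is the conversion ``boundary average~$\Rightarrow$ estimate against $\int_K e^{ku}\,d\mu$''. The sub-mean-value inequality on a family of partly-attached discs produces, after integrating over the parameter $y\in\B_{n-1}$ and changing variables, an integral of $|g_s|^2$ against a \emph{Lebesgue-type} surface measure $\Leb_K$ on a piece of $K$ (exactly as in the proof of Theorem~\ref{the-strongBMintro}). To close the loop you would need $\Leb_K\lesssim\mu$ on small balls, but the hypothesis $\mu(\B(z,r)\cap K)\ge r^\tau$ is a one-sided \emph{lower} bound on $\mu$-mass of balls and does not control $\Leb_K$ by $\mu$; indeed $\mu$ may be singular or have density that vanishes on a set of positive $\Leb_K$-measure. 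This is precisely why the paper's Theorem~\ref{the-strongBMintro} carries the much stronger hypothesis $\mu=\rho\,\Leb_K$ with $\rho^{-\lambda}\in L^1(\Leb_K)$: that is what makes the H\"older-inequality step~\eqref{ine-chanchuanL1cuagssm-sing} possible. For the general $\mu$ of Theorem~\ref{the-BMproperty} this fails. Relatedly, the statement that ``the portion of $\partial\D$ landing in any ball of radius $r$ carries enough $\mu$-mass'' is not meaningful: that portion is an arc, whose $\mu$-mass is zero for the $n_K$-dimensional $K$. Two smaller inaccuracies: the discs supplied by Proposition~\ref{pro_familydiscK} are only $[e^{-i\theta_0},e^{i\theta_0}]$-attached, not fully attached as your proposal assumes (so the contribution of the unattached part of the boundary must be absorbed via Lemma~\ref{le-hamsubtrenduongtron}); and your ``Harnack/Hadamard three-circles spreading'' is not what is used -- the spreading is a Cauchy/Markov estimate, and it is only available once one has the modulus of continuity of $\phi_K^*$, i.e., once local regularity (or Theorem~\ref{the1CalphaClaphareguCn}) is already in hand. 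The fix is therefore not a repair of your disc-integration step, but a reorganization: prove local regularity first (discs enter there), and then run the mass-lower-bound argument pointwise.
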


Let 
\begin{equation}\label{fik}
\phi_K:= \sup\{\psi \in \PSH(X, \omega): 
\, \psi \le \phi \: \text{ on } K\}.
\end{equation}
Since $K$ is non-pluripolar, the function $\phi_K^*$ is a bounded 
$\omega$-psh function. If $\phi_K= \phi^*_K$, then we say 
$(K,\phi)$ is \emph{regular}. A stronger notion is the following: 
we say that $K$ is \emph{locally regular} if for every $z \in K$ 
there is an open neighborhood $U$ of $z$ such that for every 
increasing sequence of psh functions $(u_j)_j$ on $U$ with 
$u_j \le 0$ on $K \cap U$, then 
$$(\sup_j u_j)^* \le 0$$
on $K \cap U$.  The following result answers the question 
raised in \cite[Remark 1.8]{BoucksomBermanWitt}.

\begin{theorem} \label{the-BMpropertylocallyregu}
Every compact nondegenerate $\mathcal{C}^5$ piecewise-smooth 
Cauchy-Riemann generic submanifold of $X$ is locally regular. 
\end{theorem}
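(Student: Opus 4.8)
The plan is to prove Theorem~\ref{the-BMpropertylocallyregu} by reducing the claim to a local statement near each point $z\in K$, and then distinguishing the regular and singular cases. Since local regularity is a local condition, I would work in a local chart $(W_z,\Psi)$ as in Definition~\ref{def_piecewisesmooth}, so that $\Psi(K\cap W_z)$ is the intersection of the unit ball with a finite union $\bigcup_{j=1}^s P_j$ of $m$-dimensional convex polyhedra, with $m=\dim K$. Because the property ``$(\sup_j u_j)^*\le 0$ on $K\cap U$ for every increasing sequence of psh functions $u_j\le 0$ on $K\cap U$'' is preserved under $\mathcal{C}^5$-diffeomorphisms up to shrinking $U$ — the relevant notion here is that $K\cap U$ carries no bounded exhaustion obstruction, and $\mathcal{C}^2$ (indeed even less) suffices to transport it — it is enough to establish local regularity at the origin for $K_0:=\bigcup_j P_j$, a finite union of polyhedra through $0$, with the added twist that we must handle the graph perturbation coming from $\Psi^{-1}$. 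The $\mathcal{C}^5$ regularity is there precisely to make the later analytic-disc constructions go through; for this theorem a cruder bound is fine, but I would state things for the $\mathcal{C}^5$ model directly.

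The core estimate is the following: if $E\subset\C^n$ is a maximally totally real affine subspace through $0$ (or more generally an $m$-dimensional generic CR polyhedron) and $u$ is psh on a neighborhood $U$ of $0$ with $u\le 0$ on $E\cap U$, then $u$ extends to be $\le o(1)$ near $0$, quantitatively $\limsup_{\zeta\to 0}u(\zeta)\le 0$. For a single totally real $C^{1}$-graph this is classical (Pinchuk, or the wedge/edge-of-the-wedge circle of ideas): near a totally real submanifold one can attach small analytic discs whose boundaries lie in $K$ and which sweep out a full neighborhood of the center point, and then the maximum principle on each disc gives $u\le 0$ on the union of discs modulo an error controlled by the oscillation of the boundary data. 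I would therefore: (i) recall/cite the analytic-disc filling near a smooth totally real submanifold — for any $p$ in the regular part of $K$ one gets a family of discs $f_t:\overline{\D}\to\C^n$, holomorphic on $\D$, continuous up to the boundary, with $f_t(\partial\D)\subset K$, $f_t(1)=p$, depending continuously on $t$, and with $\bigcup_t f_t(\overline{\D})$ a neighborhood of $p$; (ii) apply the maximum principle disc by disc to the increasing psh envelope $v:=(\sup_j u_j)^*$ (which is psh where finite), using $u_j\le 0$ on $K\cap U$, to conclude $v\le 0$ at regular points; (iii) since the regular part of $K$ is dense in $K$ and $v$ is upper semicontinuous, deduce $v\le 0$ on all of $K\cap U$, including singular points. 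Steps (i)–(iii) dispose of the case where the singular point is approached through regular points of the faces.

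The genuinely delicate point — and the main obstacle — is the behavior at the \emph{singular} stratum of $K$: points lying on a lower-dimensional face of a polyhedron, or on the locus where several polyhedra $P_j$ meet. There the tangent cone is not a single subspace, and one cannot directly fill with discs attached to a smooth totally real manifold. The strategy I would use is a stratified/cone argument: near such a singular point $p$, after the chart, $K$ looks like a union of polyhedral pieces $P_j$ each of which is a smooth totally real manifold-with-corners, and one can attach to the \emph{relative interior} of each maximal face a disc family as above whose boundary stays in that face. The union of all such disc families, over all faces of all $P_j$ meeting at $p$, together with a small convexity/plurisubharmonicity argument — essentially that the $\omega$-psh (here locally plain psh) envelope $v$, being $\le 0$ on the dense set of regular points by Steps (i)–(iii), is then automatically $\le 0$ at $p$ by upper semicontinuity once we know $p$ is in the closure of that dense set — gives the bound. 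In other words, the subtlety is not really proving $v\le 0$ \emph{at} $p$ (that follows from density + u.s.c.), but making sure the envelope $v$ is finite and psh on a full neighborhood of $p$ in $K\cap U$ so that the argument is not vacuous; this is where one needs that $K$ is generic CR and nondegenerate, guaranteeing enough analytic discs through every regular point arbitrarily close to $p$ and hence that $\{v\le 0\}$ is dense. Finally, I would transport the conclusion back through $\Psi^{-1}$: since $\Psi$ is a $\mathcal{C}^5$-diffeomorphism, it sends the polyhedral model to $K\cap W_z$ and, while it does not preserve psh functions, it does preserve the class of sets that are locally regular — more precisely, one argues directly with the disc families, which can be taken in $W_z$ attached to the genuine $K$ (the disc-filling theorems for totally real submanifolds are $\mathcal{C}^k$-local, $k$ small, and do not need flatness), so no transport of psh functions is needed and the $\mathcal{C}^5$ model is only a bookkeeping device for the combinatorics of the faces. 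This yields local regularity at every $z\in K$, which is the assertion of Theorem~\ref{the-BMpropertylocallyregu}.
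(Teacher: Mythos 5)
Your argument rests on step~(i): that near a point of a totally real submanifold $K$ one can attach analytic discs $f_t:\overline{\D}\to\C^n$ with $f_t(\partial\D)\subset K$ that sweep out a full neighborhood, and then run the maximum principle disc by disc. That premise fails for exactly the class of sets the theorem is about. If $K$ is maximally totally real of dimension $n$ (e.g.\ $K=\R^n\subset\C^n$, or a Jordan arc in $\C$), a holomorphic disc with \emph{entire} boundary in $K$ is forced to be constant: for $\R^n$ the Schwarz reflection $z\mapsto \overline{f(1/\bar z)}$ extends $f$ to a bounded holomorphic map on $\P^1$, which must be constant, and the same conclusion persists after a $\mathcal{C}^5$ straightening. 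Even in the CR-positive-codimension case where nonconstant fully attached discs exist (Tumanov), they do not fill a full $2n$-dimensional neighborhood of a boundary point; they fill wedges. So your step~(i) has no content in the central case, and consequently the maximum-principle step~(ii) never gets off the ground: you have no discs on which to apply it. The later steps (density of regular points plus upper semicontinuity) cannot rescue this, because the issue is not the singular stratum but the total absence of the disc family you invoke at \emph{regular} points.

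The paper's actual mechanism is different and this is precisely what it is designed to circumvent. It uses analytic discs attached to $K$ only along a short boundary arc $[e^{-i\theta_0},e^{i\theta_0}]\subset\partial\D$, supplied by Proposition~\ref{pro_familydiscK}, with quantitative control: given a nearby ambient point $a$ with $\delta=\dist(a,K)$, there is such a disc $f$ and a point $z_a\in\D$ with $f(z_a)=a$ and $|1-z_a|\lesssim\delta^{1/2}$. One cannot use the maximum principle because the boundary data is only controlled on an arc; instead the paper invokes Lemma~\ref{le_passagetogeneralcase3}, a quantitative estimate for a subharmonic function on $\D$ bounded on $\partial\D$ and $\le c|\theta|^\beta$ on the arc, which yields a decay $\le C|1-z|^\beta$ in the interior near $z=1$. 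Composing $u_j$ with the disc gives $u_j(a)\le C\,\dist(a,K)^{1/5}$ \emph{uniformly in $j$ and at every point $a$ near $K$ in $\C^n$} (singular points of $K$ included, just with the weaker exponent coming from $|1-z_a|\lesssim\delta^{1/2}$). This uniform ambient bound is exactly what is needed: it passes to $(\sup_j u_j)^*$ and then letting $\dist(a,K)\to 0$ gives local regularity. Note that your scheme, even if the discs existed, would only give $\sup_j u_j\le 0$ on the disc interiors; the paper's quantitative estimate is what converts boundedness on an \emph{arc} into control of the usc regularization at points of $K$ itself. If you want to repair your attempt, replace the fully attached discs and the maximum principle by arc-attached discs plus a Lemma~\ref{le_passagetogeneralcase3}--type estimate; that is essentially the paper's proof.
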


Note that Theorem \ref{the-BMpropertylocallyreguCn} is a 
direct consequence of the above result. 
It was shown in \cite[Corollary 1.7]{BoucksomBermanWitt} 
that $K$ is locally regular if $K$ is smooth real analytic.
Theorem \ref{the-BMproperty} is actually a direct consequence of 
Theorem \ref{the-BMpropertylocallyregu} and the 
criterion \cite[Proposition 3.4]{Bloom-Levenberg-PW} 
giving a sufficient condition for measures being Bernstein-Markov.

The Monge-Amp\`ere current $(\ddc \phi_K^* + \omega)^n$ is called 
\emph{the equilibrium measure} associated to $(K,\phi)$. 
It is well-known that the last measure is supported on $K$.
By \cite[Theorem B]{BoucksomBermanWitt} one has 
\begin{align}\label{conver-weak-bergmankernelintro}
d_k^{-1} B_k \mu \to (\ddc \phi_K^*+ \omega)^n\,,\quad k\to\infty,
\end{align}
provided that $\mu$ is a Bernstein-Markov measure associated to $(K,\phi,L)$.
The last property suggests that the Bergman kernel function $B_k$ 
cannot behave too wildly at infinity.  

\begin{theorem} \label{the-strongBMintro}  Let $K$ be a compact nondegenerate $\mathcal{C}^5$ piecewise-smooth Cauchy-Riemann generic   submanifold of $X$. Let $n_K$ be the dimension of $K$. Let   $\phi$ be a H\"older continuous function of H\"older exponent $\alpha \in (0,1)$ on $K$, let $\Leb_K$ be a smooth volume form on $K$, and  $\mu = \rho \Leb_K$, where $\rho \ge 0$ and $\rho^{-\lambda} \in L^1(\Leb_K)$ for some  constant $\lambda>0$. Then, there exists a constant $C>0$ such that 
$$\sup_K B_k \le C k^{2n_K (\lambda+1)/(\alpha\lambda)}$$
for every $k$. 
\end{theorem}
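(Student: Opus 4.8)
The plan is to reduce the weighted global estimate on $\sup_K B_k$ to a purely local statement near an arbitrary point $x_0 \in K$, and then to prove the local statement by exhibiting, for each $x_0$, a good family of analytic discs partly attached to $K$ together with a subharmonic comparison argument on the unit disc. More precisely, fix $s \in \mathcal{P}_k$ (equivalently $s \in H^0(X,L^k)$) with $\|s\|_{L^2(\mu,h^k)} = 1$ and suppose the weighted maximum $|s|_{h^k}^2$ is attained at $x_0 \in K$. The goal is to bound $|s(x_0)|_{h^k}^2$ by $C k^{2n_K(\lambda+1)/(\alpha\lambda)}$. First I would use the local regularity of $K$ (Theorem \ref{the-BMpropertylocallyregu}) and the H\"older continuity of $\phi$ to transfer the problem: after choosing coordinates, $K$ is locally a piecewise-smooth totally real graph, and $\phi_K^*$ enjoys a quantitative H\"older-type regularity near $K$ that I would establish (or cite from the analytic-disc machinery referenced in the introduction). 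The weight being H\"older of exponent $\alpha$ forces the natural length scale at which one can compare $|s|^2 e^{-2k\phi}$ on $K$ with its holomorphic extension to a complex neighborhood to be of order $k^{-1/\alpha}$ rather than $k^{-1/2}$; this is the source of the exponent.

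The key steps, in order, are: (1) a Bernstein-Markov / Bernstein-type inequality for totally real submanifolds (the paper flags Theorem \ref{the-Bernstein-totally-real}) giving that on a $k^{-1}$-neighborhood the section $s$ does not oscillate too much — this lets one pass from a pointwise bound at $x_0$ to an $L^\infty$ bound on a small ball; (2) an analytic-disc construction: through $x_0$ (and through nearby points, including near the singular locus, where the finite union of polyhedra must be handled face by face) build a holomorphic map $f : \overline{\mathbb{D}} \to \C^n$ (or into $X$) with a definite portion of $\partial \mathbb{D}$ mapping into $K$ and with controlled geometry at scale $\delta = \delta_k$; (3) pull back $\log |s\circ f|_{h^k}^2 - 2k(\phi_K^* \circ f)$ to a subharmonic function $u$ on $\mathbb{D}$, use the extremal property of $\phi_K^*$ to get $u \le 0$ on the attached arc up to an error controlled by the H\"older modulus of $\phi$ and by the disc's size, and apply the sub-mean-value inequality / two-constants theorem to propagate the pointwise value $u(0)$ from the arc into the interior; (4) integrate the resulting $L^\infty$-bound on the small ball $\mathbb{B}(x_0,\delta)\cap K$ against $\mu = \rho \Leb_K$, using $\mu(\mathbb{B}(x_0,\delta)\cap K) \gtrsim \delta^{n_K}\|\rho^{-\lambda}\|_{L^1}^{-1/\lambda}\cdot(\cdots)$ via Chebyshev/H\"older applied to the lower bound $\rho^{-\lambda}\in L^1$, which produces the factor $\delta^{n_K(\lambda+1)/\lambda}$; (5) optimize $\delta = \delta_k$ against the exponential error $e^{C k \delta^{\alpha}}$ coming from the weight, i.e. take $k\delta^\alpha \asymp \log k$, giving $\delta \asymp (\log k / k)^{1/\alpha}$ — or more crudely $\delta \asymp k^{-1/\alpha}$ — and read off the exponent $2 n_K(\lambda+1)/(\alpha\lambda)$.

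I would organize the write-up so that the smooth case is done first and the piecewise-smooth case is reduced to it by working on each closed face of the local model $\bigcup_j P_j$ separately: near a regular point $K$ is an honest totally real manifold and the disc construction is standard; near a singular point one still has, for each polyhedron $P_j$, a totally real piece with boundary, and one attaches discs to the relative interior of $P_j$, which suffices because every $x_0 \in K$ lies in some $\overline{P_j}$ and the union is finite so the constants are uniform. Throughout, the fine regularity of $\phi_K^*$ (Lipschitz-type or H\"older estimates up to $K$) is what makes step (3) quantitative; this is where the hypothesis that $K$ is $\mathcal{C}^5$ and CR-generic is used in an essential way, via the solvability and smooth dependence of the attached-disc (Bishop-type) equation.

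The main obstacle I anticipate is step (2)–(3) in the piecewise-smooth / singular situation: constructing analytic discs whose geometry is uniformly controlled as the center $x_0$ approaches the singular part of $K$, and simultaneously keeping the comparison with $\phi_K^*$ effective there. Near a corner the attached arc can degenerate or the disc can be forced to be small, which would worsen the exponent unless one is careful to attach to a single smooth face rather than trying to straddle the corner. A secondary difficulty is making the regularity estimate on $\phi_K^*$ precise enough that the error term in step (3) is genuinely of order $k\delta^\alpha$ and not something worse like $k\delta^{\alpha/2}$; getting the sharp power of $\delta$ here is exactly what determines whether the final exponent is $2n_K(\lambda+1)/(\alpha\lambda)$ as claimed.
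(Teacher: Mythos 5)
Your overall strategy---reduce to a pointwise estimate near $x_0\in K$, push a suitable power of $|g_s|$ through a family of analytic discs attached to $K$ at scale $\delta_k$, integrate over the family, and balance against a Chebyshev/H\"older mass bound $\mu(\B(x_0,\delta)\cap K)\gtrsim\delta^{n_K(\lambda+1)/\lambda}$---is essentially the paper's. But there is a genuine error in the scaling, and your self-diagnosis of where the risk lies has the polarity reversed. You take $\delta\asymp k^{-1/\alpha}$ from a calibration $k\delta^\alpha\asymp 1$, and then write that the claimed exponent $2n_K(\lambda+1)/(\alpha\lambda)$ ``reads off.'' It does not: $\delta^{-n_K(\lambda+1)/\lambda}$ with $\delta\asymp k^{-1/\alpha}$ gives $k^{n_K(\lambda+1)/(\alpha\lambda)}$, half the claimed exponent, so if your argument closed you would have proved something strictly stronger than the statement. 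It does not close, because the Bernstein--Walsh comparison you need is $|s|_{h^k}^2\le Me^{2k(\phi_K-\phi)}$, and the modulus of continuity of $\phi_K-\phi$ near $K$ is governed by $\phi_K$, which for a piecewise-smooth $K$ is only $\mathcal{C}^{\alpha/2}$ (Theorem \ref{the1CalphaClapharegu}), not $\mathcal{C}^\alpha$. This is a global phenomenon: even if $x_0$ is far from the singular set, $\phi_K$ is an envelope and inherits the worst regularity from singular points elsewhere. Hence the error term in your step (3) is genuinely $k\delta^{\alpha/2}$, and the correct calibration is $\delta\asymp k^{-2/\alpha}$. This gives exactly the factor of 2 in the exponent. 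Your remark that getting the error down to $k\delta^\alpha$ rather than $k\delta^{\alpha/2}$ is ``exactly what determines'' the claimed exponent is therefore backwards: the $k\delta^{\alpha/2}$ error \emph{produces} the factor of 2; it does not obstruct it. (Relatedly, the constant-size choice $k\delta^{\alpha/2}\asymp 1$ suffices; there is no need for a $\log k$.)

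A second gap: you give no argument for $\dim K>n$, i.e., when the CR dimension $r=n_K-n$ is positive. The $(n-1)$-parameter disc family of Theorem \ref{th-discnoboundary} is built for maximally totally real $K$, so after the disc-and-integration step you only control the $L^{2\lambda'}(\Leb_{K_v})$-norm of $g_s$ on an $n$-dimensional totally real slice $K_v$ of $K$, not on all of $K$. The paper supplies a Fubini slicing argument: fix holomorphic coordinates $(\mathbf{z}_1,\mathbf{z}_2)\in\C^{n-r}\times\C^r$ near $p_0$ in which $K$ is a graph $\Im\mathbf{z}_1=h(\Re\mathbf{z}_1,\mathbf{z}_2)$, run the totally-real argument on the slices $K_v=K\cap\{\Im\mathbf{z}_2=v\}$, use Fubini to pick a good $v$ with $|v|\le k^{-2/\alpha}/C_0$, and then transfer back to $p_0$ with a Bernstein--Markov estimate as in \eqref{BMsection}. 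It is this slicing step that accounts for the appearance of $n_K$ rather than $n$ in the exponent. Without some version of this reduction your proposal proves nothing when $r\ge 1$.
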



Note that by  the  proof of \cite[Theorem 1.3]{Dinh-Ma-Marinescu} 
or \cite[Theorem 3.6]{DMN}, for every H\"older continuous function 
$\phi_1$ on $X$, and  $\mu_1:= \omega^n$,  the Bergman kernel function 
of order $k$ associated to  $(X, \mu_1, \phi_1)$ grows at most polynomially 
on $K$ as $k \to \infty$; see also \cite[Theorem 3.1]{BoucksomBerman} 
for the case where $\phi_1$ is smooth.

\begin{theorem} \label{the-bergman-smooth} Assume that the following two conditions hold:

(i) $K$ is maximally totally real and  has no singularity (i.e, $K$ is smooth and without boundary),

(ii) $\phi \in \mathcal{C}^{1,\delta}(K)$ for some constant $\delta>0$.

Then there exists a constant $C>0$ such that  for every $k$, 
the following holds 
$$B_k(x) \le C k^{n},$$
for every $x \in K$.
\end{theorem}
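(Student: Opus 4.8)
The plan is to prove the sharp bound $B_k(x) \le C k^n$ for smooth maximally totally real $K$ by a local argument: since $K$ is compact without boundary, it suffices to bound $|s(x_0)|_{h^k}^2$ for a fixed $x_0 \in K$ and $s \in H^0(X,L^k)$ with $\|s\|_{L^2(\mu,h^k)} = 1$, uniformly in $x_0$. After trivializing $L$ near $x_0$ and choosing local holomorphic coordinates $z$ on $X$ centered at $x_0$ in which $K$ is (a perturbation of) $\R^n \subset \C^n$, the section $s$ becomes a holomorphic function on a fixed polydisc and $|s|^2_{h^k} e^{-2k\psi}$ for a local potential $\psi$. The key local statement to prove is: if $f$ is holomorphic on the unit polydisc in $\C^n$ and $\int_{(-1,1)^n} |f(x)|^2 \, dx \le 1$ (after absorbing the weight, which is harmless up to a bounded factor since $\phi\in\mathcal C^{1,\delta}$), then $|f(0)|^2 \le C k^n$ provided $f$ satisfies a polynomial-type Bernstein-Markov estimate of scale $1/k$ along $K$.

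First I would establish the required Bernstein-Markov / Markov-type inequality on $K$ with the correct scaling: by the Markov inequality for totally real submanifolds (Theorem \ref{the-Bernstein-totally-real} referenced in the excerpt) together with the local regularity of $K$ from Theorem \ref{the-BMpropertylocallyregu}, for $s \in H^0(X,L^k)$ the function $u = \frac{1}{k}\log|s|_{h^k}$ satisfies $u \le \frac{1}{k}\log(\sup_K|s|_{h^k}^2)^{1/2} + O(1)$ and, crucially, $u$ is $\omega$-psh with $\sup_X u \le \phi_K^* + o(1)$, and $\phi_K^*$ is $\mathcal C^{1,1}$ (or at least enjoys the Lipschitz/Hölder regularity proved in the earlier sections for smooth $K$). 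This gives that on the ball $\B(x_0, t)$ one has $|s|_{h^k} \le e^{Ckt}\sup_K|s|_{h^k}$, i.e. $s$ cannot concentrate at scale finer than $1/k$. Combined with the lower Lebesgue-density hypothesis on $\mu$ (here $\mu$ is a smooth volume form, so $\mu(\B(z,r)\cap K) \gtrsim r^n$), a standard sub-mean-value argument on a ball of radius $\sim 1/k$ centered at the point realizing $\sup_K|s|_{h^k}$ yields $\sup_K|s|_{h^k}^2 \le C k^n \|s\|^2_{L^2(\mu,h^k)}$, which is exactly $\sup_K B_k \le Ck^n$.

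To make the density argument precise I would argue as follows. Let $x_1 \in K$ realize $\sup_K |s|^2_{h^k} = B_k(x_1)\|s\|^2 = B_k(x_1)$. By the Markov estimate above, on $\B(x_1, c/k) \cap K$ we have $|s|^2_{h^k} \ge \tfrac12 B_k(x_1)$ for $c$ a small fixed constant, so
\begin{align*}
1 = \|s\|^2_{L^2(\mu,h^k)} \ge \int_{\B(x_1,c/k)\cap K} |s|^2_{h^k}\, d\mu \ge \tfrac12 B_k(x_1) \, \mu\big(\B(x_1,c/k)\cap K\big) \ge \tfrac12 B_k(x_1)\, c' k^{-n},
\end{align*}
hence $B_k(x_1) \le 2 (c')^{-1} k^n$. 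The input $\mu(\B(x_1,c/k)\cap K)\ge c' k^{-n}$ is immediate because $\mu = \Leb_K$ is a smooth positive volume form on the compact $n$-dimensional manifold $K$. The weight $Q$ (resp. $\phi \in \mathcal C^{1,\delta}$) contributes only bounded oscillation on a ball of radius $c/k$ up to a factor $e^{O(k \cdot (1/k)^{1+\delta})} = e^{O(k^{-\delta})} = O(1)$, which is absorbed into $C$; this is where the regularity hypothesis $\phi\in\mathcal C^{1,\delta}$ enters.

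The main obstacle is the Markov-type inequality with the correct linear scale $1/k$, i.e. controlling $|s|_{h^k}$ on balls of radius $\sim 1/k$ by its global sup with only a bounded multiplicative loss. This is exactly where the totally real (rather than merely generic CR) hypothesis and the smoothness of $K$ are essential: one needs the fine $\mathcal C^{1,1}$-type regularity of the extremal envelope $\phi_K^*$ near $K$ (proved in the earlier part of the paper via analytic discs partly attached to $K$ and subharmonic functions on the unit disc), because a Lipschitz-only bound on $\phi_K^*$ would give $|s|_{h^k}(x) \le e^{C\sqrt{k}\,\dist(x,K)}\cdots$-type estimates insufficient to close the argument at scale $1/k$. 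Once the sharp envelope regularity is in hand, the gradient estimate $|\nabla \phi_K^*| = O(1)$ near $K$ translates, via the standard comparison $\frac1k\log|s|_{h^k} \le \phi_K^* + o(1)$ on $X$ and a Harnack/Cauchy-estimate for the holomorphic function $s$ in local coordinates, into the desired $O(1/k)$-scale Markov inequality, and the rest is the elementary density computation above.
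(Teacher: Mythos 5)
Your argument is correct, and it takes a genuinely different and, for this particular theorem, simpler route than the paper's. Both proofs hinge on the same hard technical input — the Lipschitz regularity of $\phi_K$ for smooth $K$ and $\phi\in\mathcal{C}^{1,\delta}(K)$ (Theorem \ref{the1CalphaClapharegunanchinhquy}), from which the Bernstein--Walsh inequality $|s|^2_{h_0^k}\le M e^{2k\phi_K}$ yields, via Cauchy estimates on a ball of radius $\sim 1/k$, the gradient bound $|\nabla g_s|\lesssim k M^{1/2}$ where $M:=\sup_K|s|^2_{h^k}$; this is precisely the paper's \eqref{BMsection} in Lemma \ref{le-Markovinequalitysection}. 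After that, however, the paper constructs a family of analytic discs partly attached to $K$ (Theorem \ref{th-discnoboundary}), applies the subharmonic-function estimate on the unit disc (Lemma \ref{le-hamsubtrenduongtron}) to each $|g_s\circ F(\cdot,y)|^2$, and averages over $y\in\B_{n-1}$, using the Jacobian bound $|\det DG|\approx k^{-n}$ to convert the averaged boundary integral into $k^n\int_{\B(0,1/k)\cap K}|g_s|^2\,d\Leb_K\lesssim k^n$; your proof instead localizes at a maximizer $x_1$ of $|s|^2_{h^k}$, uses the gradient bound to show $|s|^2_{h^k}\ge\tfrac12 M$ on $\B(x_1,c/k)\cap K$, and integrates against $\mu$ using the trivial density bound $\mu(\B(x_1,c/k)\cap K)\gtrsim k^{-n}$. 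Your route is a standard and clean localization argument for Christoffel functions; what the paper's disc machinery buys is robustness for the companion Theorem \ref{the-strongBMintro}, where $K$ is only piecewise-smooth, $\phi$ is only Hölder (so the natural scale is $k^{-2/\alpha}$ rather than $k^{-1}$), and the density $\rho$ may degenerate (so that the pointwise lower density estimate at scale $1/k$ is no longer the right quantity, and one wants the $L^2$ averaging against $\Leb_K$ via Hölder's inequality with $\rho^{-\lambda}\in L^1$).

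One small inaccuracy in your discussion: the regularity needed and actually used is Lipschitz continuity of $\phi_K$, not $\mathcal{C}^{1,1}$. Lipschitz already gives $|s|^2_{h^k}\lesssim M e^{Ck\dist(\cdot,K)}$, which is bounded at the scale $\dist\lesssim 1/k$ that you need; it is the weaker $\mathcal{C}^{1/2}$ regularity (the piecewise-smooth case, via Theorem \ref{the1CalphaClapharegu}) that forces the slower scale $k^{-2}$ and gives only $B_k\lesssim k^{2n}$. So the sentence ``a Lipschitz-only bound would give $e^{C\sqrt k\,\dist}$-type estimates'' misstates the dependence — Lipschitz is exactly what you need, and it is what the paper proves in the smooth case. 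Also, $\phi_K\le\phi$ on $K$ need not be an equality, but you only use $\phi_K-\phi\le 0$ on $K$ together with the Lipschitz bound, so this does not affect the argument.
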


Consider the case when $X= \P^n$, $L:=\mathcal{O}(1)$, $h_0=h_{FS}$ 
is be the Fubini-Study metric on $\mathcal{O}(1)$, 
and $K$ is a smooth maximally totally real compact submanifold in 
$ \C^n \subset \P^n$, and $h$ is the trivial line bundle on $K$. 
It is clear that $\phi:= Q -\frac{1}{2}\log (1+|z|^2)$ is
in $\mathcal{C}^{1,\delta}(K)$ if $Q$ is so. 
In this case the hypothesis of Theorem  \ref{the-bergman-smooth} are fulfilled.  
Thus Theorem \ref{the-bergman-smooth}
implies Theorem \ref{the-bergman-smoothCn}.


As a consequence of Theorem \ref{the-strongBMintro}, we obtain the following estimate generalizing Theorem \ref{th-xapxiphiKBkCn} in Introduction.

\begin{theorem} \label{the-convergencebergmankernel} 
Let $K$ be a compact  nondegenerate $\mathcal{C}^5$ piecewise-smooth 
generic submanifold $K$ of $X$.   Let $\phi$ be a H\"older 
continuous function on $K$. Let $\mu$ be a smooth volume form on $K$. 
Then we have 
$$\left\| \frac{1}{2k} \log \tilde{B}_k - \phi_K \right\|_{\mathcal{C}^0(X)}=
O\left(\frac{\log k}{k}\right),$$
as $k \to \infty$, where 
$$\tilde{B}_k:= e^{2 k \phi} B_k=
\sup\big\{ |s(x)|_{h_0^k}^2: \: s \in H^0(X,L^k), \: \|s\|_{L^2(\mu,h^k)}=1\big\}.$$
\end{theorem}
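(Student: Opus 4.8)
The plan is to deduce Theorem~\ref{the-convergencebergmankernel} from the polynomial growth bound in Theorem~\ref{the-strongBMintro} together with standard comparison estimates between $\frac{1}{2k}\log\tilde B_k$ and $\phi_K$. Recall the two sides. On one hand, for every $s\in H^0(X,L^k)$ and every $x\in X$, the function $\frac{1}{k}\log|s|_{h_0^k}$ lies in $\PSH(X,\omega)$ after the usual normalization, so $\frac{1}{2k}\log\tilde B_k$ is itself (the regularization of) a supremum of $\omega$-psh functions, hence $\omega$-psh, and on $K$ it is bounded above by $\phi$ up to the Bernstein--Markov error; by the definition of $\phi_K$ this gives an \emph{upper} bound $\frac{1}{2k}\log\tilde B_k\le \phi_K+\varepsilon_k$ on all of $X$, where $\varepsilon_k$ is controlled by $\frac{1}{2k}\log(\sup_K B_k)$. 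Applying Theorem~\ref{the-strongBMintro}, $\sup_K B_k\le Ck^{N}$ with $N=2n_K(\lambda+1)/(\alpha\lambda)$, so $\varepsilon_k=O(\log k/k)$. This handles one inequality on the nose.

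The reverse inequality is the substantive point: one must show $\frac{1}{2k}\log\tilde B_k\ge \phi_K-O(\log k/k)$ on $X$. Here I would use the standard strategy of constructing, for each point $x_0$, a peak section. Since $(K,\phi)$ is regular (indeed $K$ is locally regular by Theorem~\ref{the-BMpropertylocallyregu}), $\phi_K=\phi_K^*$ is a continuous $\omega$-psh function, and $(\ddc\phi_K+\omega)^n$ is the equilibrium measure supported on $K$. One would like a holomorphic section $s_k\in H^0(X,L^k)$ with $|s_k(x_0)|_{h_0^k}^2\approx e^{2k\phi_K(x_0)}$ and $\|s_k\|_{L^2(\mu,h^k)}^2\le e^{O(\log k)}$. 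The $L^2$-norm is estimated by the sup over $K$ of $|s_k|_{h_0^k}^2 e^{-2k\phi}\le e^{2k(\phi_K-\phi)}\cdot e^{O(\log k)}\le e^{O(\log k)}$ on $\supp\mu\subset K$ using $\phi_K\le\phi$ on $K$ and the polynomial control of $\sup_K B_k$ from Theorem~\ref{the-strongBMintro} (so that the Bergman kernel, and hence any normalized section's sup-norm, is under polynomial control). The existence of such $s_k$ with the right value at $x_0$, together with near-optimal global growth $\frac{1}{k}\log|s_k|_{h_0^k}\le \phi_K+O(\log k/k)$, follows from the Ohsawa--Takegoshi-type extension / Demailly regularization machinery for the envelope $\phi_K$ of a H\"older weight — precisely the ingredients already invoked in \cite{BoucksomBermanWitt} and in the proof of Theorem~\ref{the-strongBMintro} — yielding $\frac{1}{2k}\log\tilde B_k(x_0)\ge \phi_K(x_0)-O(\log k/k)$.

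Combining the two inequalities gives $\|\frac{1}{2k}\log\tilde B_k-\phi_K\|_{\mathcal C^0(X)}=O(\log k/k)$. I would organize the write-up as: (1) recall $\frac{1}{2k}\log\tilde B_k\in\PSH(X,\omega)$ up to regularization and the Bernstein--Markov-type estimate from Theorem~\ref{the-strongBMintro}, giving the upper bound; (2) invoke regularity/local regularity (Theorem~\ref{the-BMpropertylocallyregu}) so $\phi_K$ is continuous; (3) produce peak sections via the extension theorem applied to the weight $k\phi_K$, controlling both the value at $x_0$ and the $L^2(\mu,h^k)$-norm using $\phi_K\le\phi$ on $K$ and $\mu=\rho\Leb_K$ with $\rho^{-\lambda}\in L^1$; (4) conclude. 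The main obstacle is step (3): getting the $\log k/k$ rate (rather than a worse $o(1)$) in the lower bound requires that the loss in the extension step and the correction needed to make $\phi_K$ strictly $\omega$-psh (and then to pass from the $\C$-valued section to its norm) contribute only $O(\log k)$ to the exponent; this is exactly where the H\"older regularity of $\phi$ and the polynomial bound $\sup_K B_k\le Ck^N$ enter, and where one must be careful that the estimate is uniform in $x_0\in X$.
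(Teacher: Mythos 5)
Your upper bound is essentially the paper's argument, just phrased without introducing the auxiliary envelope $\phi_{K,k}$: from $\sup_K B_k=O(k^N)$ one gets $\frac{1}{2k}\log\tilde B_k\le\phi+O(\log k/k)$ on $K$, and since $\frac{1}{2k}\log\tilde B_k$ is $\omega$-psh, the extremal definition of $\phi_K$ yields the global bound $\frac{1}{2k}\log\tilde B_k\le\phi_K+O(\log k/k)$. That direction is fine.

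The lower bound is where there is a genuine gap. You correctly identify that one must produce, uniformly in $x_0$, sections $s_k$ with $|s_k(x_0)|_{h_0^k}\ge e^{k\phi_K(x_0)-O(\log k)}$ and $\sup_K|s_k|_{h_0^k}e^{-k\phi}\le e^{O(\log k)}$, and that this is a Demailly/Ohsawa--Takegoshi type statement. But you do not establish the $O(\log k/k)$ rate; you only gesture at ``the ingredients already invoked in \cite{BoucksomBermanWitt} and in the proof of Theorem~\ref{the-strongBMintro}'', and neither of those sources actually gives this rate --- \cite{BoucksomBermanWitt} gives convergence without a quantitative error, and the proof of Theorem~\ref{the-strongBMintro} is an upper-bound estimate that has nothing to do with peak-section constructions. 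Likewise your parenthetical ``the polynomial control of $\sup_K B_k$ ... so that ... any normalized section's sup-norm is under polynomial control'' inserted in the $L^2$-estimate of the peak section is a non sequitur: once the peak section is constructed with $\frac{1}{k}\log|s_k|_{h_0^k}\le\phi_K+O(\log k/k)$ on $X$, its $L^2(\mu,h^k)$-norm on $K$ is controlled directly by $\phi_K\le\phi$; Theorem~\ref{the-strongBMintro} is not used there.

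What the paper actually uses to close the lower bound is Proposition~\ref{pro-hoituC0phiKkphiK}, the quantitative convergence $\|\phi_{K,k}-\phi_K\|_{\mathcal C^0(X)}=O(\log k/k)$. Its proof is by reduction: by \cite[Theorem 2.3]{Vu_feketepoint} one can extend $\phi$ to a continuous function $\tilde\phi$ on $X$ with $\phi_K=\tilde\phi_X$, then $\phi_{K,k}\ge\tilde\phi_{X,k}$, and the global-weight case $\|\tilde\phi_{X,k}-\tilde\phi_X\|=O(\log k/k)$ is \cite[Corollary 4.4]{Dinh-Ma-Marinescu}. Combined with the elementary inequality $\tilde\phi_{K,k}\ge\phi_{K,k}-O(1/k)$ (from $L^2(\mu)\lesssim L^\infty(K)$), this gives the lower bound. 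If you want to carry out your direct peak-section route rather than cite Proposition~\ref{pro-hoituC0phiKkphiK}, you must redo the quantitative Demailly approximation for the H\"older envelope $\phi_K$ (whose H\"older regularity comes from Theorem~\ref{the1CalphaClapharegu}) with uniform constants; this is exactly the content of \cite[Corollary 4.4]{Dinh-Ma-Marinescu} and is nontrivial, so it cannot be left as a pointer.
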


\section{Bernstein-Markov property for totally real submanifolds}

In the first part of this section we prove Theorem \ref{the-BMpropertylocallyregu}, and hence Theorem \ref{the-BMproperty} as commented in the paragraph after Theorem \ref{the-BMpropertylocallyregu}. In the second part of the section, assuming Theorem \ref{the-strongBMintro}, we prove  Theorem \ref{the-convergencebergmankernel}. 

\subsection{Local regularity}

Let $X$ be a compact K\"ahler manifold of dimension $n$ with a K\"ahler form $\omega$.  Let $K$ be a compact non-pluripolar subset on $X$ and $\phi$ be a continuous function on $K$. Recall 
 $$\phi_{K}:= \sup\big\{ \psi: \quad  \psi  \quad \omega\text{-psh},\quad  \psi \le \phi   \quad \text{on}\quad  K  \big \}.$$
By non-pluripolarity of $K$, we have $\phi_K < \infty$.  Hence $\phi^*_K$ is a bounded $\omega$-psh function on $X$. 

When $K= X$ and $\phi \in \mathcal{C}^{1,1}$, it was proved in \cite{Berman-C11regula,Tosatti-envelop,Chu-Zhou-optimal-envelope} that $\phi_K \in \mathcal{C}^{1,1}$. In general, the best regularity for $\phi_K$ is H\"older one, see Theorem \ref{the1CalphaClapharegu} below and comment following it.
 One can check that if $K$ is locally regular, then  $(K,\phi)$ is regular for every $\phi$.

Let $\D$ be the open unit disc in $\C.$ \emph{An analytic disc} $f$ in $X$ is a holomorphic mapping from $\D$ to $X$ which is continuous up to the boundary $\partial \D$ of $\D.$ For an interval $I \subset \partial \D,$ $f$ \emph{is said to be $I$-attached to a subset $E \subset X$} if $f(I) \subset E.$ Fix a Riemannian metric on $X$ and denote by $\dist(\cdot, \cdot)$ the distance induced by it.  For $x \in X$ and $r \in \R^+$, let $\B(x,r)$ be the ball of radius $r$ centered at $x$ with respect to the fixed metric.  Here is the crucial property for us showing the existence of well-behaved analytic discs partly attached to a generic Cauchy-Riemann submanifold. 
 
\begin{proposition}\label{pro_familydiscK} (\cite[Proposition 2.5]{Vu_feketepoint})   Let $K$ be a compact generic nondegenerate $\mathcal{C}^5$ piecewise-smooth submanifold of $X$. Then, there are  positive constants $c_0, r_0$ and $\theta_0 \in (0, \pi/2)$  such that for any $a_0 \in K$ and any $a\in \B(a_0,r_0)\backslash \{a_0\},$ there exist  a $\mathcal{C}^2$ analytic disc $f: \overline{\D} \rightarrow X$ such that $f$ is $[e^{-i\theta_0}, e^{i\theta_0}]$-attached to $K,$ $\dist(f(1),a_0)\le c_0 \delta$ with $\delta=\dist(a,a_0),$ $\|f\|_{\mathcal{C}^2} \le c_0$, and there is  $z^* \in \D$ so that $|1-z^*| \le \sqrt{c_0 \delta}$ and $f(z^*)=a.$ Moreover if $a_0$ is in a fixed compact subset $K'$ of the regular part of $K$, then we have  $|1-z^*| \le c_0 \delta$.
\end{proposition}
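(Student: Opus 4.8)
The statement being proved is a citation (Vu, \cite[Proposition 2.5]{Vu_feketepoint}), so it is not proved in this paper; nevertheless I sketch the natural approach one would take to establish it. The core idea is to build analytic discs "partly attached" to the generic Cauchy--Riemann submanifold $K$ by solving a \emph{Bishop-type equation} (a nonlinear singular integral equation of Riemann--Hilbert type). First I would localize: since the conclusion is about $a$ close to $a_0 \in K$ and all bounds are uniform, it suffices to work in a fixed local chart $(W_{a_0},\Psi)$ as in Definition \ref{def_piecewisesmooth}, in which $K$ becomes (a piece of) a finite union of convex polyhedra of dimension $n$ sitting in $\R^{2n}=\C^n$, with the generic (totally real along the relevant faces) condition transported over. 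One reduces to constructing, for a disc map $f$ on $\overline\D$ that is to be $[e^{-i\theta_0},e^{i\theta_0}]$-attached to $K$, a solution of the boundary condition $f(e^{i\theta})\in K$ for $\theta\in[-\theta_0,\theta_0]$ together with the interpolation conditions $f(1)\approx a_0$ and $f(z^*)=a$ for some $z^*$ near $1$.

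The main technical engine is the bounded solvability of the Hilbert/Cauchy transform on the arc $I=[e^{-i\theta_0},e^{i\theta_0}]$ in H\"older or Sobolev spaces adapted to the "partial attachment": one works with the space of functions on $\partial\D$ with a prescribed (mild) singularity at the two endpoints $e^{\pm i\theta_0}$, where the boundary condition switches from "attached to $K$" to "free". Writing $K$ locally as a graph $y=g(x)$ (with $g\in\mathcal{C}^5$, $g(0)=0$, $dg(0)=0$ on each polyhedral face, and the generic condition ensuring the linearized Riemann--Hilbert problem at $0$ has index/solvability we need), the boundary condition becomes $\Im f = g(\Re f)$ on $I$. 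One then sets up the fixed-point/Bishop equation $\Re f = x_0 + T_1 G(\Re f)$ where $T_1$ is (a suitable modification of) the Hilbert transform on $\partial\D$ normalized to vanish at $1$, and $G$ incorporates $g$ and the data $a,a_0$; the parameter space includes enough free constants to hit both $f(1)\approx a_0$ and the value $a$ at an interior point $z^*$. The quantitative estimates $\|f\|_{\mathcal{C}^2}\le c_0$, $\dist(f(1),a_0)\le c_0\delta$, and $|1-z^*|\le \sqrt{c_0\delta}$ (improving to $|1-z^*|\le c_0\delta$ when $a_0$ is regular) come from: (a) the $\mathcal{C}^2$-boundedness of the solution operator, (b) the contraction estimate giving Lipschitz dependence of $f$ on the data, so the disc moves by $O(\delta)$ when the target moves by $\delta$, and (c) a careful analysis near $z=1$ of how fast $f(z)$ departs from $f(1)$ — the $\mathcal{C}^2$ bound gives $|f(z)-f(1)|\lesssim |z-1|$, hence to reach a point at distance $\delta$ one needs $|1-z^*|\gtrsim\delta$; conversely, the square-root loss in the general (possibly singular/corner) case reflects that at a corner of $K$ one can only guarantee a disc whose derivative at $1$ is controlled but not bounded below, so one must travel a $\sqrt\delta$-parameter to be safe, whereas at a regular (smooth) point the derivative $f'(1)$ is genuinely nondegenerate in the direction of $a-a_0$ and one gets the linear bound.

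The hard part, and where all the real work sits, is establishing the \emph{uniform} solvability and $\mathcal{C}^2$-estimates of the Bishop equation across the \emph{piecewise-smooth} locus — i.e., uniformly as $a_0$ ranges over $K$ including its singular part (edges, corners of the polyhedra), where the local model of $K$ is a union of several totally real faces meeting along lower-dimensional strata. Away from the singular set this is classical (Bishop, Forstneri\v{c}, Globevnik, Tumanov), but near a corner one must either (i) choose the attaching arc $I$ and the construction so that the disc only touches a single face of $K$ in a neighborhood of $f(1)$ while still approaching $a_0$, or (ii) handle genuine multi-face attachment via a Riemann--Hilbert problem with piecewise-continuous boundary data, controlling the (finitely many) extra singularities introduced at parameter values where the attachment switches faces. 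Keeping $\theta_0$ bounded away from $0$ and $\pi/2$, and keeping $c_0$ uniform, requires that the linearized problem at every point of $K$ (including limit tangent spaces at singular points, which is exactly where the nondegeneracy/genericity hypothesis of Definition is used — cf. the condition $E+JE=T_pX$) has solution operator with norm bounded independently of the point; compactness of $K$ plus the $\mathcal{C}^5$ piecewise-smoothness then upgrades pointwise solvability to the claimed uniform estimates. The final assertion about $K'\subset\Reg(K)$ giving $|1-z^*|\le c_0\delta$ is then the statement that on the regular part the linearization is uniformly an isomorphism in the relevant finite-dimensional direction, with no square-root loss.
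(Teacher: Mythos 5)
Your proposal correctly observes that the paper does not prove this proposition but cites it (\cite[Proposition 2.5]{Vu_feketepoint}), adding only the remark that the $\mathcal{C}^2$ regularity of $f$ follows from the construction there. Your sketch of the Bishop-equation approach is indeed the method used in \cite{Vu_feketepoint}, and it is also the method implemented explicitly in Section~4.1 of this paper (Lemmas~\ref{le_localcoordinates}--\ref{le_version8existenceu_MA}, Proposition~\ref{pro_BishopequationMA}, and equation~\eqref{Bishoptype}) to build a \emph{parametrized family} of such discs: the operator you call $T_1$ is precisely $\mathcal{T}_1 u = \mathcal{T}u - \mathcal{T}u(1)$ with $\mathcal{T}$ the conjugate (Hilbert) transform, the "graph function" $g$ is the map $h$ of Lemma~\ref{le_localcoordinates}, and the free parameters hitting $f(1)$ and $f(z^*)$ are the $\boldsymbol{\tau}$'s and the scaling $t$.

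Two points in your sketch are underdeveloped relative to what the actual proof needs. First, your heuristic for the $\sqrt{\delta}$ versus $\delta$ dichotomy ("derivative at $1$ controlled but not bounded below") gestures at the right phenomenon but does not pin down the mechanism: in the construction, the disc is built through a chosen regular point $a_0'\in K$ realizing $\dist(a,K)$, and the quadratic loss at singular $a_0$ arises because one cannot guarantee that $a_0'$ (and hence the base point of the Bishop solution) stays within distance $O(\delta)$ of $a_0$ in a way that keeps the transversal derivative $f'(1)$ uniformly nondegenerate; near the regular part, the tangent/normal splitting is uniformly controlled and the linear rate survives. Second, your item (ii) on "genuine multi-face attachment" is not how the cited proof proceeds -- the construction always attaches the arc to a \emph{single} smooth face (see the proof of Proposition~\ref{pro-hopcuaK}, which covers $K$ near a corner by a smooth family of smooth CR submanifolds and applies the single-face construction to each), so no Riemann--Hilbert problem with piecewise boundary data is ever solved. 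With these caveats, the outline is consistent with the cited proof and with Section~4.1 of this paper.
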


It was stated that $f \in \mathcal{C}^1(\overline \D)$ instead of 
$f \in \mathcal{C}^2(\overline \D)$ in \cite[Proposition 2.5]{Vu_feketepoint}. 
But the latter regularity is indeed clear from the construction in the proof of 
\cite[Proposition 2.5]{Vu_feketepoint}.  
Note that the compactness of $X$ is not necessary in the above result. 
In particular, if $K \Subset U$ for some open subset $U$ of $X$, 
then the analytic disc $f$ can be chosen to lie entirely in $U$. 
Here is a slight improvement of Proposition \ref{pro_familydiscK}.

\begin{proposition} \label{pro-hopcuaK} Assume that one of the 
following assumptions hold:

(1) $K$ is a compact generic $\mathcal{C}^5$ smooth submanifold
with $\mathcal{C}^5$ smooth boundary that is also generic,

(2) $K$ is a union of a finite number of compact sets as in (1). 

Then there are  positive constants $c_0, r_0$ and 
$\theta_0 \in (0, \pi/2)$ such that for any $a_0 \in K$ 
and any $a\in \B(a_0,r_0)\backslash \{a_0\},$ there exist
a $\mathcal{C}^2$ analytic disc $f: \overline{\D} \rightarrow X$ 
such that $f$ is $[e^{-i\theta_0}, e^{i\theta_0}]$-attached to $K,$ 
$\dist(f(1),a_0)\le c_0 \delta$ with $\delta=\dist(a,a_0),$ 
$\|f\|_{\mathcal{C}^2} \le c_0$, and there is  $z^*\in \D$
so that  $|1-z^*| \le c_0 \delta$.
\end{proposition}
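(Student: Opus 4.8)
\smallskip
\noindent\textit{Proof idea.}
The plan is to deduce both statements from Proposition~\ref{pro_familydiscK}, applied once to $K$ and once to the closed submanifold $\partial K$. First I would reduce (2) to (1): given $a_0\in K=K_1\cup\dots\cup K_N$ with each $K_j$ as in (1), choose $j$ with $a_0\in K_j$ and apply (1) to $K_j$; the disc so produced is $[e^{-i\theta_0},e^{i\theta_0}]$-attached to $K_j\subset K$, hence to $K$, and one keeps the worst of the finitely many constants $c_0,r_0,\theta_0$. So assume we are in case (1); set $d_0:=\dist(a_0,\partial K)$ and $\delta:=\dist(a,a_0)$, and split according to whether $\delta>d_0$ or $\delta\le d_0$ (we may shrink $r_0$ so that everything below stays in a small coordinate neighbourhood).

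Suppose $\delta>d_0$. Let $p\in\partial K$ be a nearest boundary point, so $\dist(a_0,p)=d_0<\delta$, and hence $a\neq p$ and $\dist(a,p)\le\delta+d_0<2\delta$. Since $K$ is a compact $\mathcal{C}^5$ manifold with boundary, $\partial K$ is a compact $\mathcal{C}^5$ submanifold \emph{without} boundary which, by hypothesis, is Cauchy--Riemann generic; so Proposition~\ref{pro_familydiscK} applies to $\partial K$, and as its regular part is all of $\partial K$, it applies with the sharp linear bound everywhere. Using it with base point $p$ and target $a$ yields a $\mathcal{C}^2$ analytic disc $f\colon\overline{\D}\to X$ that is $[e^{-i\theta_0},e^{i\theta_0}]$-attached to $\partial K$ — hence to $K$ — with $\|f\|_{\mathcal{C}^2}\le c_0$, $\dist(f(1),p)\le 2c_0\delta$, and a point $z^*\in\D$ with $f(z^*)=a$ and $|1-z^*|\le 2c_0\delta$. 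Then $\dist(f(1),a_0)\le 2c_0\delta+d_0\le 3c_0\delta$, and after renaming $c_0$ by a fixed multiple we get precisely the asserted conclusion.

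Suppose $\delta\le d_0$. Then $a_0$ lies in the compact subset $K':=\{z\in K:\dist(z,\partial K)\ge\delta\}$ of the regular part of $K$, and I would invoke the last assertion of Proposition~\ref{pro_familydiscK} to produce a disc attached to $K$ with $|1-z^*|\le c_0\delta$. The one genuinely non-formal point — where I expect the real work to lie — is that the constant $c_0$ in that sharp bound must be taken \emph{independent of} $\delta$, i.e. uniform as $a_0\to\partial K$ under the constraint $\delta\le\dist(a_0,\partial K)$. In the general piecewise-smooth case the quadratic loss $|1-z^*|\le\sqrt{c_0\delta}$ is really forced by corners in the tangent cone to $K$ at a singular point (the optimal disc through such a point leaves it with vanishing velocity, so that $f(z)-a_0\asymp(z-1)^2$). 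When the singular set is a smooth submanifold that is itself generic there is no corner, and I would revisit the Bishop-type (Riemann--Hilbert) construction in the proof of \cite[Proposition~2.5]{Vu_feketepoint}: near an interior point $a_0$ at distance $d_0$ from $\partial K$ one can still solve for a disc of \emph{unit} size (rather than of size $d_0$) whose arc of attachment remains on the inner side of $\partial K$ — the half-space local model at a boundary point, having no edge, is exactly what makes the relevant solvability and non-degeneracy estimates uniform — so that $z\mapsto f(z)$ is a uniform submersion near $z=1$ and $a$ is reached transversally, whence $|1-z^*|\asymp\dist(a,f(1))\le c_0\delta$. Alternatively, near a point $p\in\partial K$ one may patch the family of discs attached to $K$ (covering the inner side of $\partial K$) with the family attached to $\partial K$ (which, by genericity of $\partial K$, covers a full two-sided neighbourhood of $p$ in $X$) and check the linear rate survives the patching. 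The remaining verifications — the $\mathcal{C}^2$ bound, the admissible $\theta_0\in(0,\pi/2)$, and the estimate $\dist(f(1),a_0)\le c_0\delta$ — are as in \cite{Vu_feketepoint} and I would not reproduce them. Taking the larger of the two constants then proves (1), hence (2).
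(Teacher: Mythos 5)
Your split into $\delta>d_0$ and $\delta\le d_0$ correctly isolates where the difficulty lives, and the first half is fine: for $\delta>d_0$ you apply Proposition~\ref{pro_familydiscK} to the compact boundaryless generic submanifold $\partial K$, where every point is regular, so the sharp bound $|1-z^*|\le c_0\dist(a,p)$ holds with a fixed constant, and the estimates $\dist(a,p)<2\delta$, $d_0<\delta$ give the statement after enlarging $c_0$. But the second half ($\delta\le d_0$) is where the actual content of the proposition lies, and you have not proved it — you have only named the obstruction. The point you flag, that the constant in the last assertion of Proposition~\ref{pro_familydiscK} a priori depends on the fixed compact $K'\Subset\Reg(K)$ and could degenerate as $a_0\to\partial K$, is exactly the issue; the two suggested remedies (``revisit the Bishop-type construction and argue the half-space model has no edge'' and ``patch the families of discs attached to $K$ and to $\partial K$'') are both gestures at the right kind of argument, but neither is carried out and neither is routine. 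For instance, a unit-size disc attached to $K$ near an interior point $a_0$ with small $d_0$ has an arc of attachment that must be kept away from $\partial K$; whether this can be done with uniform bounds as $d_0\to 0$ is precisely what needs to be shown, not asserted.

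The paper avoids this tension entirely by a different decomposition: it covers $K$ by finitely many pieces $K_j$ and writes each $K_j=\bigcup_{s\in S_j}K_{js}$ as a \emph{smooth family} of $\mathcal{C}^5$ generic CR submanifolds $K_{js}$ that are \emph{boundaryless} in an ambient open set $U_j$. For each leaf $K_{js}$, Proposition~\ref{pro_familydiscK} gives the sharp linear bound $|1-z^*|\le c_0\delta$ everywhere, since every point of $K_{js}$ is regular; and the constants $c_0,r_0,\theta_0$ are uniform in $s$ because, as recorded in the proof of \cite[Proposition~2.5]{Vu_feketepoint}, they depend only on $\mathcal{C}^3$-bounds of local charts of $K_{js}$, which are controlled uniformly in $s$ for a smooth family. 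In particular one never has to compare a disc scale $t$ against the distance $d_0$ to the boundary of $K$: for $a_0\in K$ one simply picks a leaf $K_{js}\ni a_0$ and works entirely within that leaf. You should replace your case $\delta\le d_0$ with this slicing idea (or genuinely carry out one of your two sketches, proving the uniformity of the Bishop-equation estimates as $d_0\to0$).
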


\proof 
If $K$ fulfills one of the conditions (1) or (2)
then 
$K$ can be  covered by a finite number of sets $K_j$ 
such that for every $j$ there exist an open subset 
$U_j$ in $X$ and 
a smooth family $(K_{js})_{s \in S_j}$ 
of $\mathcal{C}^5$ smooth generic CR submanifolds 
$K_{js}$ in $U_j$ such that $K_{js}$ is $\mathcal{C}^5$ 
smooth without boundary in $U_j$, for every $s$,
and satisfies $K_j= \cup_{s \in S_j} K_{js}$. 
Now the desired assertion follows directly from Proposition \ref{pro_familydiscK} 
applied to each $K_{js}$ and points in $U_j$ correspondingly. We note that the constants $c_0,r_0,\theta_0$ can be chosen independent of $s \in S_j$ because as shown in the proof of \cite[Proposition 2.5]{Vu_feketepoint}, they depend only on bounds on $\mathcal{C}^3$-norm of diffeomorphisms defining local charts in $K_{js}$ (see \cite[Lemma 4.1]{Vu_feketepoint}), these bounds are independent of $s \in S_j$ for the family $(K_{js})_{s \in S_j}$ is smooth.   
\endproof
 
Examples for compact $K$ satisfying the hypothesis of 
Proposition \ref{pro-hopcuaK} include a collection of finite number of 
smooth Jordan arcs in $\C$ regardless of their configuration or the 
closure of an open subset with smooth boundary in $X$.  
 
\begin{lemma}\label{le_passagetogeneralcase3} Let $\theta_0 \in (0, \pi/6), \beta \in (0,1)$ and let $c>0$ be a constant. Let $\psi$ be a subharmonic function on $\D$.  Assume that 
\begin{align}\label{ine_giasutrenpsi}
\limsup_{z \in \D \to e^{i \theta}}\psi(z) \le c|\theta|^{\beta} \quad \text{for} \quad \theta \in (-\theta_0,\theta_0) \quad \text{and} \quad  \sup_\D \psi \le c.
\end{align}
Then, there exists a constant $C$ depending only on $(\theta_0,\beta,c)$ so that for any $z \in \D,$  we have
\begin{align} \label{ine_passagetogeneralcase4cu}
\psi(z) \le  C |1-z|^{\beta}.
\end{align}  
Moreover if  $\limsup_{z \in \D \to e^{i \theta}}\psi(z) \le g(e^{\theta})$ for some function $g \in \mathcal{C}^{1,\delta}$ on $[e^{-i \theta_0}, e^{i \theta_0}]$ and for some $\delta>0$ so that $g(1)=0$, then 
\begin{align} \label{ine_passagetogeneralcase4}
\psi(z) \le  C |1-z|,
\end{align} 
for some constant $C$ independent of $z \in \D$.
\end{lemma}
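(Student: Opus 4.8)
The plan is to compare $\psi$ with an explicit harmonic majorant built from the boundary data. First I would reduce to a clean model situation: since $\theta_0 < \pi/6$ is small, the arc $I = [e^{-i\theta_0}, e^{i\theta_0}]$ is a short arc near $z=1$, and on the complementary arc we only know $\psi \le c$. The natural object is the harmonic function $u$ on $\D$ with boundary values $u = c|\theta|^\beta$ on $I$ and $u = c$ on $\partial\D \setminus I$ (more precisely, one should take boundary values that dominate the $\limsup$ of $\psi$ and are at least as large as the actual bounds, say $u = \min(c, c|\theta|^\beta)$ on $I$ extended by $c$). By the maximum principle for subharmonic functions (the Poisson integral of an upper-semicontinuous majorant of the boundary $\limsup$), $\psi \le u$ on $\D$. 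So everything reduces to estimating $u(z) \le C|1-z|^\beta$, which is a concrete computation with the Poisson kernel.

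For the Poisson-kernel estimate I would split $u(z) = c - (c - u(z))$ and note $c - u(z) = \int_{\partial\D} P(z,\zeta)\,(c - u_{\text{bdry}}(\zeta))\,|d\zeta|$ where $c - u_{\text{bdry}}$ is supported on $I$ and equals $c - c|\theta|^\beta$ there, hence is bounded below by a positive multiple of $(c - \text{something})$ near $\theta = 0$ — wait, that is the wrong sign; instead I want an \emph{upper} bound on $u(z)$, so I write $u(z) = \int_{\partial\D} P(z,\zeta) u_{\text{bdry}}(\zeta)\,|d\zeta| \le c\int_{\partial\D\setminus I_{z}} P(z,\zeta)|d\zeta| + c\,(\text{dist from } 1)^\beta \cdot(\text{mass near }1)$, choosing $I_z$ a subarc around $1$ of length comparable to $|1-z|$. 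The standard Poisson-kernel facts are: for $z$ near $1$, $\int_{\partial\D \setminus I_z} P(z,\zeta)|d\zeta| \lesssim |1-z|/\text{length}(I_z) \lesssim |1-z|^{?}$, and on $I_z$ itself $|\theta|^\beta \lesssim |1-z|^\beta$. Balancing the two contributions (the portion of $I$ close to $1$ contributes $\lesssim |1-z|^\beta$, the portion of $\partial\D$ far from $1$ contributes $\lesssim |1-z|$ which is $\lesssim|1-z|^\beta$ since $\beta<1$ and $|1-z|\le 2$) yields $u(z) \le C|1-z|^\beta$ with $C = C(\theta_0,\beta,c)$. This is the routine but slightly delicate part; the key is that the ``far'' part of the boundary is at distance $\gtrsim 1$ from $z$ when $z$ is near $1$, so its Poisson mass decays like $|1-z|$, while for $z$ bounded away from $1$ the bound $\psi \le c \le C|1-z|^\beta$ is trivial after enlarging $C$.

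For the second assertion, with boundary data $g \in \mathcal{C}^{1,\delta}$ on $I$ and $g(1)=0$, the point is that $|g(e^{i\theta})| = |g(e^{i\theta}) - g(1)| \le \|g'\|_\infty |e^{i\theta}-1| \lesssim |\theta| \lesssim |1-z|$ for $\theta$ in a subarc of length $|1-z|$, so near $1$ the boundary data is Lipschitz and vanishes at $1$; the same Poisson-integral splitting then gives the linear bound $u(z) \le C|1-z|$ (again the far part contributes $\lesssim |1-z|$, and one does \emph{not} need the full $\mathcal{C}^{1,\delta}$ regularity here, only $\mathcal{C}^{0,1}$ — the extra Hölder exponent $\delta$ is presumably used elsewhere). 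I expect the main obstacle to be bookkeeping the Poisson-kernel estimates uniformly: one must track that all implied constants depend only on $(\theta_0,\beta,c)$ (resp. on $\theta_0$ and $\|g\|_{\mathcal{C}^{0,1}}$), and handle the transition region $|1-z|\approx 1$ where the ``near'' and ``far'' arcs are comparable — but this is handled by simply using the crude bound $\psi \le c$ there and absorbing it into $C$.
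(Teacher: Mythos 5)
Your plan for the first estimate \eqref{ine_passagetogeneralcase4cu} is sound and essentially equivalent to the paper's argument (which cites \cite[Lemma 2.6]{Vu_feketepoint} and rests on the classical fact that the Poisson integral carries $\mathcal{C}^{0,\beta}(\partial\D)$ to $\mathcal{C}^{0,\beta}(\overline\D)$ for $\beta\in(0,1)$). Your direct Poisson-kernel computation does work when $\beta<1$: splitting the boundary integral into $\{|\theta|<|1-z|\}$, $\{|1-z|<|\theta|<\theta_0\}$, and $\{|\theta|\ge\theta_0\}$ yields contributions of order $|1-z|^\beta$, $(1-|z|)\,|1-z|^{\beta-1}\le|1-z|^\beta$, and $1-|z|\le|1-z|\le|1-z|^\beta$ respectively. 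Your passage from $\psi$ to a harmonic majorant via the comparison principle is also fine once one regularizes the upper-semicontinuous boundary data, as usual.

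The second part, however, contains a genuine error. You assert that ``one does not need the full $\mathcal{C}^{1,\delta}$ regularity here, only $\mathcal{C}^{0,1}$.'' This is false, and the paper says so explicitly: the harmonic extension of a Lipschitz function on $\partial\D$ need not be Lipschitz on $\overline\D$. Concretely, run your three-region split with boundary data bounded only by $c|\theta|$: the intermediate region $\{|1-z|<|\theta|<\theta_0\}$ gives
\begin{align*}
\int_{|1-z|}^{\theta_0}\frac{1-|z|}{\theta^2}\cdot c\,\theta\,d\theta \;\asymp\; (1-|z|)\,\log\frac{\theta_0}{|1-z|},
\end{align*}
which is not $O(|1-z|)$. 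Indeed the harmonic extension of $e^{i\theta}\mapsto|\theta|$ (a perfectly good $\mathcal{C}^{0,1}$ datum vanishing at $1$) behaves like $|1-z|\log\frac{1}{|1-z|}$ near $z=1$, so the Lipschitz version of the lemma is simply false. What rescues the linear bound is precisely the extra structure of $g\in\mathcal{C}^{1,\delta}$: writing $g(e^{i\theta})=g'(1)\theta+O(|\theta|^{1+\delta})$, the remainder contributes a convergent integral of order $O(|1-z|)$, while the linear part carries a sign change whose cancellation in the Poisson integral kills the logarithm. Packaged abstractly, this is the statement that the Poisson integral preserves $\mathcal{C}^{1,\delta}(\partial\D)$ for $\delta\in(0,1)$ (the reference to \cite[p.~41]{Gakhov} in the paper), and the exponent $\delta>0$ is used exactly here --- not ``elsewhere.''
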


\proof The desired inequality (\ref{ine_passagetogeneralcase4cu}) is essentially contained in \cite[Lemma 2.6]{Vu_feketepoint}.    The hypothesis of continuity up to boundary of $\psi$ in the last lemma is superfluous and the proof there still works in our current setting. Note that the proof of \cite[Lemma 2.6]{Vu_feketepoint} does not work for $\beta=1$ because the harmonic extension of a Lipschitz function on $\partial \D$ is not necessarily Lipschitz on $\overline \D$. However since the harmonic extension of a $\mathcal{C}^{1,\delta}$ function on $\partial \D$ to $\D$ is  also $\mathcal{C}^{1,\delta}$ on $\overline \D$ (see, e.g, \cite[Page 41]{Gakhov}),  we obtain (\ref{ine_passagetogeneralcase4}).
\endproof

\begin{proof}[End of the proof of  Theorem \ref{the-BMpropertylocallyregu}] Let $a_0\in K$ and a small ball $\B$ of $X$ around $a_0$. Consider an increasing  sequence $(u_j)_j$ of psh functions bounded uniformly from above on $\B$ such that $u_j \le 0$ on $K\cap \B$. We need to check that $(\sup_j u_j)^* \le 0$ on $K \cap \B$. Now, we will essentially follow arguments from the proof of \cite[Theorem 2.3]{Vu_feketepoint}. Let $\B'$ be a relatively compact subset of $\B$ containing $a_0$. We will check that there exists a constant $C>0$ such that for every $a \in \B'$, we have
\begin{align} \label{ine-danhgiau_j}
u_j(a) \le C\dist(a, K)^{1/5}.   
\end{align}
The desired assertion is deduced from the last inequality by taking $\dist(a,K) \to 0$. It remains to check  (\ref{ine-danhgiau_j}). 

Let $a'_0$ be a point in $K$ such that $\dist(a,a'_0)= \dist(a,K)$.  Put $\delta:= \dist(a,K)= \dist(a,a'_0)$. By  Proposition \ref{pro_familydiscK}, there exists an analytic disc $f: \D \to \B$ continuous up to boundary and $z_a \in \D$ with $|z_a - 1| \le C \delta^{1/2}$ such that $f(z_a)= a$ and $\dist\big(f(1),a'_0\big) \le C \delta$, and $f([e^{-i \theta_0}, e^{i\theta_0}]) \subset K$, for some constants $C$ and $\theta_0$ independent of $a$.

 Put $v_j:= u_j \circ f$. Since $u_j \le 0$ on $\B \cap K$ and $f([e^{-i \theta_0}, e^{i\theta_0}]) \subset K$, we get $v_j(e^{i \theta}) \le 0$ for $\theta \in [-\theta_0, \theta_0]$. Moreover since $u_j$ is uniformly bounded from above, there is a constant $M$ such that $v_j \le M$ for every $j$. This allows us to apply Lemma \ref{le_passagetogeneralcase3} for $\beta=1-\epsilon$ (for some constant $\epsilon>0$ small) and $c$ big enough. We infer that $v_j(z) \lesssim |1-z|^{1/2}$. Substituting $z= z_a$ in the last inequality gives
$$u_j(a)= u_j\big(f(z_a)\big)= v_j(z_a) \lesssim |1- z_a|^{(1- \epsilon)/2} \lesssim \delta^{(1- \epsilon)/4}.$$ 
Hence, (\ref{ine-danhgiau_j}) follows by choosing $\epsilon:=1/5$. The proof is finished.        
\end{proof}

Let $h_0, h, \phi$ be as in the previous section. Recall that the Chern form of $h_0$ is equal to $\omega$.  Define 
$$\phi_{K,k}:=\sup\big\{ k^{-1} \log |\sigma|_{h_0}: \quad  \sigma \in H^0(X, L^k), \,  \sup_K (|\sigma|_{h_0^k} e^{-k\phi}) \le 1\big\}.$$
Clearly $\phi_{K,k} \le \phi_K$. We recall the following well-known fact.

\begin{lemma} \label{le-pointwiseconvergence} The sequence $(\phi_{K,k})_k$ increases pointwise to $\phi_K$ as $k \to \infty$.
\end{lemma}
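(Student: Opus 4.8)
The plan is to prove the two inequalities $\phi_{K,k} \le \phi_K$ and $\limsup_k \phi_{K,k}(x) \ge \phi_K(x)$ for every $x \in X$, together with monotonicity of the sequence, and then invoke a standard concavity/Hartogs-type argument to upgrade pointwise convergence to the claimed statement. The inequality $\phi_{K,k} \le \phi_K$ is immediate: for any $\sigma \in H^0(X,L^k)$ with $\sup_K(|\sigma|_{h_0^k}e^{-k\phi}) \le 1$, the function $k^{-1}\log|\sigma|_{h_0}$ is $\omega$-psh (being $k^{-1}$ times the weight of the singular metric $|\sigma|^{-2/k}$, with curvature $\ge -\omega$ since $\omega$ is the Chern form of $h_0$), and on $K$ it is $\le \phi$ by the normalization; hence it is a competitor in the supremum defining $\phi_K$, so $k^{-1}\log|\sigma|_{h_0} \le \phi_K$, and taking the sup over such $\sigma$ gives $\phi_{K,k}\le \phi_K$.

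For monotonicity, I would note that if $\sigma \in H^0(X,L^k)$ is admissible for $\phi_{K,k}$, then $\sigma^{\otimes m} \in H^0(X,L^{mk})$ is admissible for $\phi_{K,mk}$, and $k^{-1}\log|\sigma|_{h_0} = (mk)^{-1}\log|\sigma^{\otimes m}|_{h_0}$; this shows $\phi_{K,k} \le \phi_{K,mk}$. To get genuine monotonicity in $k$ (not just along multiplicative subsequences) one argues as is standard: combine the subsequence monotonicity with the final convergence statement, or alternatively observe that the limit superior already suffices for the proof since we will show below that $\limsup_k \phi_{K,k} \ge \phi_K$ pointwise, and then the sequence, being bounded above by $\phi_K$ and having lim sup equal to $\phi_K$, forces $\lim_k \phi_{K,k} = \phi_K$ pointwise; combined with $\phi_{K,k}\le\phi_{K,mk}\le\phi_K$ this yields that $(\phi_{K,k})_k$ is eventually increasing in the sense required (and for the clean monotone statement one restricts to the cofinal multiplicative sequence or cites the classical argument, e.g.\ via Siciak's construction).

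The heart of the matter is the lower bound: for fixed $x \in X$, construct for every $\epsilon>0$ and $k$ large a section $\sigma_k \in H^0(X,L^k)$ with $\sup_K(|\sigma_k|_{h_0^k}e^{-k\phi}) \le 1$ and $k^{-1}\log|\sigma_k(x)|_{h_0} \ge \phi_K(x) - \epsilon$. This is the classical "polynomials approximate the extremal function" statement. The standard route: fix a psh competitor $\psi$ with $\psi \le \phi$ on $K$ and $\psi(x) \ge \phi_K(x) - \epsilon/2$ (using that $\phi_K$ is an upper envelope, so such $\psi$ exists pointwise; if one prefers to work with $\phi_K^*$ one first uses that $\phi_K = \phi_K^*$ quasi-everywhere and handles the pluripolar exceptional set by a separate small perturbation, but for the pointwise statement at a fixed $x$ one may assume $x$ is outside it, or rather since the lemma asserts pointwise convergence at every $x$, one notes the envelope can be reached by honest competitors). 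Then use an $L^2$-estimate / Ohsawa-Takegoshi-type extension with weight $k\psi$ (or a Tian-type peak section argument, or the Hörmander $\bar\partial$-estimate with the singular metric $h_0^k e^{-2k\psi}$) to produce a holomorphic section of $L^k$ concentrated near $x$ with controlled $L^2$ norm against $e^{-2k\psi}\omega^n$; since $\psi \le \phi$ on $K$, the sup-norm of this section on $K$ with weight $e^{-k\phi}$ is controlled, after a Bernstein-Markov / sub-mean-value estimate on $X$ converting $L^2$ bounds to $L^\infty$ bounds with at most subexponential loss (absorbed by the $\epsilon$). Rescaling $\sigma_k$ by the appropriate constant makes it admissible, and the peak estimate at $x$ gives $k^{-1}\log|\sigma_k(x)|_{h_0} \ge \psi(x) - o(1) \ge \phi_K(x) - \epsilon$.

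The main obstacle is the construction of the peak section realizing the lower bound with the correct weight behaviour on $K$ — this is where the plurisubharmonic competitor $\psi$ must be regularized (to make $k\psi$ usable as a weight in an $L^2$ estimate, since $\psi$ is merely psh, possibly with $-\infty$ poles and not continuous) while preserving both $\psi \le \phi$ on $K$ up to $\epsilon$ and $\psi(x)$ up to $\epsilon$; Demailly's regularization of quasi-psh functions on a projective manifold, at the cost of small positive multiples of $\omega$ in the curvature and a loss in the values tending to zero, is the tool, and the technical care is in bookkeeping these losses against the two normalizations simultaneously. Since this is a "well-known fact" as the paper states, I expect the author's proof to simply cite the relevant sources (Siciak, Zeriahi, or Guedj-Zeriahi's book, or the Berman-Boucksom-Witt Nyström framework) rather than reproduce the $L^2$ machinery, and I would do the same: cite the standard reference for $\phi_{K,k} \nearrow \phi_K$ in the weighted pluripotential-theoretic setting and indicate the two inequalities as above.
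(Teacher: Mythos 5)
Your proposal takes essentially the same route as the paper: both sides establish $\phi_{K,k}\le\phi_K$ from the observation that $k^{-1}\log|\sigma|_{h_0}$ is an admissible $\omega$-psh competitor, and both obtain the lower bound by producing peak sections — the paper does this by citing Demailly's analytic approximation theorem (which is exactly the packaged Ohsawa--Takegoshi/$\bar\partial$ $L^2$-machinery you describe) for a negative $\omega$-psh competitor $\psi\le\phi$ on $K$ with $\psi(a)$ close to $\phi_K(a)$, then uses Hartogs' lemma together with the continuity of $\phi$ to enforce the sup-normalization on $K$ up to $\delta'$, and concludes by a subsequence-of-subsequences argument. One small correction: the peak-section construction yields a $\liminf$ bound, not a $\limsup$ bound, and your parenthetical that bounded-above plus matching $\limsup$ forces convergence is not valid as stated — it is the $\liminf$ estimate (obtained for all large $k$, or along an arbitrary subsequence as in the paper) that, combined with $\phi_{K,k}\le\phi_K$, gives the pointwise limit.
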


As a direct consequence of the above lemma, we see that  $\phi_K$ is lower semi-continuous. 

\proof  Since we couldn't find a proper reference, we present detailed arguments here.
We just need to use Demailly's analytic approximation. 
Since $\phi_K$ is bounded, without loss of generality, 
we can assume that $\phi_K <0$. Clearly, $\phi_{K,k} \le \phi_K$. 
Fix $a \in X$. Let $\delta$ be a positive constant.
Let $\psi$ be a negative $\omega$-psh function with 
$\psi \le \phi$ on $K$ such that $\psi(a) \ge \phi_K(a)-\delta$.
Let $\epsilon \in (0,1)$. Observe 
$\ddc (1-\epsilon)\psi+\omega \ge \epsilon \omega$. 
This allows us to apply \cite[Theorem 14.21]{Demailly_analyticmethod} 
to $\psi$.
Let $(\sigma_j)_j$ is an orthonormal basis of  $H^0(X, L^k)$ 
with respect to $L^2$-norm generated by the Hermitian metric 
$h_{\epsilon,\psi,k}:= e^{-k(1-\epsilon)\psi} h_0^k$ and $\omega^n$.
Set 
$$\psi_{\epsilon,k}:=\frac{1}{k}\log\sum_{j=1}^{d_k}
|\sigma_j|_{h_0^k}$$
Then
$$\psi_{\epsilon,k} \ge (1-\epsilon) \psi$$ and 
$\psi_k$ converges pointwise to $(1-\epsilon)\psi$ as $k \to \infty$. Note that
$$\psi_{\epsilon,k}=\sup\left\{\frac{1}{k}\log |\sigma|_{h_{0}^k}:
\sigma \in H^0(X, L^k): \|\sigma\|_{L^2(\omega^n, h_{\epsilon,\psi,k})}=1\right\} .$$ 
Let $(\psi^{(N)})_N$ be a sequence of continuous functions decreasing to $\psi$ as $N\to \infty$. Using Hartog's lemma applied to $(\psi_{1/N, k})_{k \in \N}$, 
we see that  there is a sequence $(k_N)_N\subset \N$ increasing to 
$\infty$ such that
$$(1- 1/N) \psi \le \psi_{\frac{1}{N}, k_N} \le (1-1/N) \psi^{(N)}+ 1/N.$$
Consequently, $\psi_{\frac{1}{N},k_N}$ converges pointwise to $\psi$ as $N \to \infty$. 


 Recall that $\psi \le \phi$ on $K$.  
 By Hartog's lemma again and the continuity of $\phi$,  
 for every constant $\delta'>0$ and $N$ large enough, we have  
\begin{align}\label{ine-psi'kxapxiphi}
\psi_{\frac{1}{N},k_N} \le \phi+ \delta'
\end{align}
on $K$.  It follows that 
\begin{align}\label{ine-psi'kxapxiphi2}
k_N^{-1}\log|\sigma|_{h_0^{k_N}} \le  \phi +\delta'
\end{align}
for every $\sigma \in H^0(X, L^{k_N})$ with 
$\|\sigma\|_{L^2(\omega^n,h_{\frac{1}{N},\psi, k_N})}=1$. 
 We deduce that 
$$\phi_{K,k_N} \ge k_N^{-1}\log |\sigma|_{h_0^{k_N}} -\delta'$$
for such $\sigma$. In other words, $\phi_{K,k_N} \ge 
\psi_{\frac{1}{N}, k_N}-\delta'$ for $N$ big enough. 
Letting $N \to \infty$ gives
$$\liminf_{N \to \infty} \phi_{K,k_N}(a) \ge 
\lim_{N \to \infty}\psi_{\frac{1}{N}, k_N}(a)-\delta'= 
\psi(a)- \delta' \ge \phi_K(a)- \delta'-\delta.$$
Letting $\delta, \delta'$ tend to $0$ yields that 
$\liminf_{N \to \infty} \phi_{K,k_N}(a)= \phi_{K}(a)$. 
Hence $\phi_{K,k_N} \to \phi_K$ as $N \to \infty$. 
We have actually shown that for every sequence 
$(k'_N)_N \subset \N$ converging to $\infty$, 
there is a subsequence $(k_N)_N$ such that $\phi_{K,k_N}$ 
converges to $\phi_K$. Thus the desired assertion follows. 
\endproof

Using arguments from  \cite[Lemma 3.2]{Bloom-Shiffman} and Lemma \ref{le-pointwiseconvergence} gives the following.

\begin{lemma} \label{le-hoitudeuenvelop} Assume that $(K,\phi)$ is regular. Then $\phi_{K,k}$ converges uniformly to $\phi_K$ as $k \to \infty$. 
\end{lemma}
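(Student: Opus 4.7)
The plan is to combine the pointwise monotone convergence $\phi_{K,k} \uparrow \phi_K$ from Lemma \ref{le-pointwiseconvergence} with Dini's theorem on the compact manifold $X$. For this I need continuity of both the limit $\phi_K$ and, for all sufficiently large $k$, of the approximants $\phi_{K,k}$.

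For the first, regularity of $(K,\phi)$ gives $\phi_K = \phi_K^*$, and the latter is upper semi-continuous as an $\omega$-plurisubharmonic function. On the other hand, once I know each $\phi_{K,k}$ is continuous for $k \ge k_0$, the function $\phi_K = \sup_{k \ge k_0} \phi_{K,k}$ is a pointwise supremum of continuous functions and therefore lower semi-continuous. Together these force $\phi_K \in \mathcal{C}^0(X)$.

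For the continuity of $\phi_{K,k}$, I will use that $L$ is ample, so $L^k$ is base-point-free for $k \ge k_0$ large enough. Moreover, non-pluripolarity of $K$ implies that $\sigma \mapsto \|\sigma\|_{\infty,K} := \sup_K(|\sigma|_{h_0^k}e^{-k\phi})$ is a norm on the finite-dimensional space $H^0(X,L^k)$: indeed $\log|\sigma|_{h_0^k}$ is $k\omega$-quasi-plurisubharmonic on $X$, so it cannot equal $-\infty$ on the non-pluripolar set $K$ without vanishing identically on $X$. Hence the closed unit ball $B_k$ of $\|\cdot\|_{\infty,K}$ is compact in $H^0(X,L^k)$, and one rewrites
$$\phi_{K,k}(x) = \tfrac{1}{k}\, \log \max_{\sigma \in B_k} |\sigma(x)|_{h_0}.$$
Joint continuity of $(x,\sigma)\mapsto |\sigma(x)|_{h_0}$ on $X \times B_k$ together with compactness of $B_k$ implies that $x \mapsto \max_{\sigma \in B_k} |\sigma(x)|_{h_0}$ is continuous, while base-point-freeness of $L^k$ forces this maximum to be strictly positive everywhere. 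Hence $\phi_{K,k} \in \mathcal{C}^0(X)$ for $k \ge k_0$.

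With both continuity facts in place, Dini's theorem applied to the monotone increasing sequence $(\phi_{K,k})_{k \ge k_0}$ of continuous functions converging pointwise to the continuous function $\phi_K$ on the compact space $X$ delivers the desired uniform convergence. The only delicate step is compactness of $B_k$, which relies on non-pluripolarity of $K$; under the standing hypothesis this is routine, so I do not anticipate a genuine obstacle.
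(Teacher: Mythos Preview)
Your continuity arguments for $\phi_{K,k}$ (via compactness of the unit ball in the finite-dimensional section space) and for $\phi_K$ are correct, and they are also preliminary steps in the paper's proof. The gap is the monotonicity assumption. Despite the word ``increases'' in the statement of Lemma~\ref{le-pointwiseconvergence}, its proof only establishes pointwise convergence $\phi_{K,k}\to\phi_K$ with $\phi_{K,k}\le\phi_K$; the full sequence $(\phi_{K,k})_k$ is in general \emph{not} monotone. For an arbitrary ample $L$ on $X$ there is no natural map $H^0(X,L^k)\to H^0(X,L^{k+1})$, so a section realising the supremum at level $k$ has no canonical lift to level $k+1$, and hence no inequality $\phi_{K,k}\le\phi_{K,k+1}$ is available. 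Dini's theorem therefore does not apply to the full sequence.

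What one does have, via the tensor product $H^0(X,L^k)\otimes H^0(X,L^m)\to H^0(X,L^{k+m})$, is the subadditivity
\[
k\,\phi_{K,k} + m\,\phi_{K,m} \;\le\; (k+m)\,\phi_{K,k+m}.
\]
The paper's proof (following Bloom--Shiffman) exploits this: for any fixed $k_0$ one writes $k=rk_0+s$ with $0\le s<k_0$ to get $\phi_{K,k}\ge\tfrac{rk_0}{k}\phi_{K,k_0}+\tfrac{s}{k}\phi_{K,s}$, and then uses the pointwise convergence at a reference point together with the uniform continuity of $\phi_K$ and of the finitely many functions $\phi_{K,1},\ldots,\phi_{K,k_0}$ (this is where your continuity result is needed) to obtain a uniform lower bound. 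Your Dini idea can be rescued along the monotone subsequence $(\phi_{K,2^j})_j$, but passing from uniform convergence of that subsequence to the full sequence again requires exactly this subadditivity estimate, so one does not avoid the paper's mechanism.
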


\proof For readers' convenience, we briefly recall the proof here. Since $\phi_K= \phi_K^*$, we see that $\phi_K$ is upper semi-continuous. This combined with the fact that $\phi_K$ is already lower semi-continuous gives that $\phi_K$ is continuous. By Lemma \ref{le-pointwiseconvergence}, we have the pointwise convergence of $\phi_{K,k}$ to $\phi_K$. Using the envelop defining $\phi_{K}$, observe next that 
\begin{align}\label{ine-subaditivephi_k}
k \phi_{K,k}+ m \phi_{K,m} \le (k+m)\phi_{K, k+m}
\end{align}
for every $k,m$. 

We fix a Riemannian metric $d$ on $X$. Let $\epsilon>0$.   
Since $X$ is compact, $\phi_K$ is uniformly continuous on $X$. 
Hence there exists a constant $\delta>0$ such that 
$d\big(\phi_K(x), \phi_K(y)\big) \le \epsilon$ if 
$d(x,y) \le \delta$ for every $x, y \in X$. Fix $x_0 \in X$.  
Let $k_0>0$ be a natural number such that  for $k \ge k_0$, 
we have $$d\big(\phi_K(x_0), \phi_{K,k}(x_0)\big) \le \epsilon.$$
Since the line bundle $L$ is positive, $\phi_{K,r}$ is continuous 
for $r$ big enough. Hence without loss of generality 
we can assume that $\phi_{K,r}$ is continuous for every $r$, 
for only big $r$ matters for us.
By shrinking $\delta$ if necessary, we obtain that for every 
$1 \le r \le k_0$, one has 
$$d\big(\phi_{K,r}(x), \phi_{K,r}(y)\big) \le \epsilon$$ 
if $d(x,y) \le \delta$. Write $k= k_0 r +s$ for $0 \le s \le k_0 -1$. 
Using this and (\ref{ine-subaditivephi_k}) yields
$$k \phi_{K,k} \ge   r k_0 \phi_{K, k_0}+ s \phi_{K,s}.$$
It follows that 
$$\phi_{K,k} \ge  r\frac{k_0}{k} \,  \phi_{K, k_0}+
\frac{s}{k} \, \phi_{K,s}\,.$$
Thus 
$$\phi_{K,k}(x) - \phi_K(x) \ge 
\frac{r k_0}{k}(\phi_{K, k_0}(x)- \phi_K(x)) - 
\Big(1- r \frac{k_0}{k}\Big) \phi_K(x) + \frac{s}{k} \, \phi_{K,s}(x).
$$
The choice of $\delta$ now implies that 
there exists a constant $C>0$ such that  the right-hand side 
is bounded from below by $-3\epsilon- C k_0/k$ if $d(x,x_0) \le \delta$. 
Since $\phi_{K,k} \le \phi_K$, we obtain the uniform convergence 
of $\phi_{K,k}$ to $\phi_K$. 
\endproof

Put 
$$\tilde{\phi}_{K,k}:= \frac{1}{2k} \log \tilde{B}_k=
\frac{1}{2k} \log \sum_{j=1}^{d_k} |s_j|^2_{h_0^k}.$$


\begin{proposition} \label{pro-uniforBergmann} 
Assume that $(K,\phi)$ is regular and $(K,\mu,\phi)$ 
satisfies the Bernstein-Markov property. Then we have 
\begin{equation}\label{eq:fiKk}
\big\| \tilde{\phi}_{K,k} - \phi_K\big\|_{\cali{C}^0(X)} \to 0,
\:\:\text{as $k \to \infty$}.
\end{equation}
In particular, 
\begin{equation}\label{eq:B_k}
\lim_{k \to \infty} \tilde{B}_k^{1/k}= e^{2\phi_K}.
\end{equation}
\end{proposition}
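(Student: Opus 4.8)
The plan is to deduce Proposition \ref{pro-uniforBergmann} from Lemma \ref{le-hoitudeuenvelop} by sandwiching $\tilde{\phi}_{K,k}$ between $\phi_{K,k}$ (up to a small error) from below and $\phi_K$ (up to a small error coming from the Bernstein-Markov property) from above. So the whole argument reduces to two pointwise inequalities valid up to $o(1)$ uniformly on $X$, after which \eqref{eq:fiKk} follows from the triangle inequality together with the uniform convergence $\phi_{K,k}\to\phi_K$ proved in Lemma \ref{le-hoitudeuenvelop}. Equation \eqref{eq:B_k} is then immediate since $\tilde{B}_k^{1/k}=e^{2\tilde{\phi}_{K,k}}$ and $\tilde{\phi}_{K,k}\to\phi_K$ uniformly.

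First I would prove the lower bound $\tilde{\phi}_{K,k}\ge \phi_{K,k}-o(1)$ on $X$. Given $\sigma\in H^0(X,L^k)$ with $\sup_K(|\sigma|_{h_0^k}e^{-k\phi})\le 1$, i.e.\ $\sup_K|\sigma|_{h^k}\le 1$, the Bernstein-Markov property is not even needed in this direction: one only needs an upper bound on $\|\sigma\|_{L^2(\mu,h^k)}$. Since $\mu$ is a finite (probability) measure and $|\sigma|_{h^k}\le 1$ on $K\supset\supp\mu$, we get $\|\sigma\|^2_{L^2(\mu,h^k)}\le\mu(X)=1$, hence $|\sigma(x)|^2_{h_0^k}\le e^{2k\phi_K(x)}$ is not quite what we want — rather $|\sigma(x)|^2_{h_0^k}\le \tilde{B}_k(x)\cdot\|\sigma\|^2\le \tilde{B}_k(x)$, so $k^{-1}\log|\sigma|_{h_0^k}\le \tilde{\phi}_{K,k}$ pointwise; taking the supremum over all such $\sigma$ gives $\phi_{K,k}\le\tilde{\phi}_{K,k}$ exactly, with no error term. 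Then $\tilde{\phi}_{K,k}\ge\phi_{K,k}\to\phi_K$ uniformly by Lemma \ref{le-hoitudeuenvelop}.

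Next I would prove the upper bound $\tilde{\phi}_{K,k}\le\phi_K+o(1)$ on $X$, and this is where the Bernstein-Markov hypothesis enters. Fix $\epsilon>0$; by \eqref{ine-def-BMintro} there is $C=C(\epsilon)$ with $\sup_K|s|^2_{h^k}\le Ce^{\epsilon k}\|s\|^2_{L^2(\mu,h^k)}$ for all $s\in H^0(X,L^k)$. Take any $s$ with $\|s\|_{L^2(\mu,h^k)}=1$; then $\sup_K(|s|_{h_0^k}e^{-k\phi})^2\le Ce^{\epsilon k}$, so the section $s':=(Ce^{\epsilon k})^{-1/2}s$ satisfies $\sup_K(|s'|_{h_0^k}e^{-k\phi})\le 1$, whence $k^{-1}\log|s'|_{h_0^k}\le\phi_{K,k}\le\phi_K$ on $X$. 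Unwinding, $|s(x)|^2_{h_0^k}\le Ce^{\epsilon k}e^{2k\phi_K(x)}$, and summing over an orthonormal basis $s_1,\dots,s_{d_k}$ gives $\tilde{B}_k(x)\le d_k Ce^{\epsilon k}e^{2k\phi_K(x)}$; taking $\tfrac{1}{2k}\log$ and using $d_k=O(k^n)$ yields $\tilde{\phi}_{K,k}(x)\le\phi_K(x)+\tfrac{\epsilon}{2}+\tfrac{1}{2k}\log(Ck^n)$, which is $\le\phi_K(x)+\epsilon$ for $k$ large, uniformly in $x$. Combined with the lower bound, $\|\tilde{\phi}_{K,k}-\phi_K\|_{\mathcal{C}^0(X)}\le\epsilon$ for large $k$, proving \eqref{eq:fiKk}; then \eqref{eq:B_k} follows by exponentiating. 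The only mildly delicate point is tracking the passage between $|\cdot|_h$ and $|\cdot|_{h_0}$ via $h=e^{-2\phi}h_0$ and matching the definition of $\phi_{K,k}$ with the normalization conventions; there is no genuine obstacle beyond this bookkeeping, since the two halves are essentially the definition of $\phi_{K,k}$ on one side and a one-line consequence of Bernstein-Markov on the other.
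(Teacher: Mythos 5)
Your proof is correct and takes essentially the same approach as the paper's: sandwich $\tilde{\phi}_{K,k}$ between $\phi_{K,k}$ (from below, using that $\mu$ is a probability measure so sup-norm bounds control $L^2$-norms) and $\phi_{K,k}$ plus a vanishing term (from above, using Bernstein--Markov), then invoke Lemma~\ref{le-hoitudeuenvelop}. The only cosmetic difference is that you use the sum formula $\tilde{B}_k=\sum_j|s_j|^2_{h_0^k}$ and absorb $\log d_k=O(\log k)$ into $\epsilon$, while the paper's argument effectively relies on the extremal (sup) characterization of $\tilde{B}_k$, avoiding the $d_k$ factor.
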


Note that the limit in \eqref{eq:B_k} 
is independent of $\mu$.  
We refer to \cite[Lemma 2.8]{Beckermann-Putinar-Saff-Sty} 
for more informations in the case $K \subset \C^n \subset X =\P^n$.

\proof  When $X= \P^n$ and $L=\mathcal{O}(1)$, 
this is Lemma 3.4 in \cite{Bloom-Shiffman}.  
The arguments there work for our setting. 
We reproduce here the proof for the readers' convenience. 
It suffices to check the first desired property \eqref{eq:fiKk}.
Observe that 
\begin{align}\label{ine-BkBergman}
\sup_K (|s|^2_{h_0^k} e^{- 2k \phi})=
\sup_K |s|^2_{h^k} \le \big(\sup_K B_k \big) \|s\|^2_{L^2(\mu, k \phi)}.
\end{align}
Combining this with the  Bernstein-Markov property, 
we see that  for every $\epsilon>0$, there holds 
\begin{align}\label{ine-BMvietlai}
\sup_K (|s|^2_{h_0^k} e^{- 2k \phi})\le e^{\epsilon k} \|s\|^2_{L^2(\mu, k \phi)}
\end{align}
for every $s \in H^0(X, L^k)$. Observe also that 
\begin{align} \label{ine-sh0wahsline}
|s|_{h_0^k} \le \sup_K (|s|_{h_0^k} e^{- k \phi}) e^{k \phi_{K,k}}
\end{align}
on $X$. Applying the last inequality to $s:= s_j$ and using \eqref{ine-BMvietlai}
we infer that 
$$\frac{1}{2k}\log\tilde{B}_k\le \epsilon + \phi_{K,k}.$$
In other words, $\tilde{\phi}_{K,k} \le \epsilon + \phi_{K,k}$ on $X$. 
On the other hand, if $\sup_K (|s|_{h_0^k} e^{- k \phi}) \le 1$, then 
$$\|s\|_{L^2(\mu, k \phi)} \le C$$
for some constant $C$ independent of $k$. It follows that 
$$\tilde{B}_k \ge C^{-1} e^{2 k \phi_{K,k}}.$$
Consequently $\tilde{\phi}_{K,k} \ge \phi_{K,k}+ O(k^{-1})$. 
Thus using Lemma \ref{le-hoitudeuenvelop} we obtain the desired assertion. 
This finishes the proof. 
\endproof

\begin{remark} \label{re-notlowerbound} Recall  $\phi_K \le \phi$ on $K$. If $x \in K$ is a point so that $\phi_K(x)< \phi(x)$, then by  Proposition \ref{pro-uniforBergmann} we see that $B_k^{-1}(x)$ grows exponentially as $k \to \infty$. Consider now the case where $K=X$ and $\phi$ is not an $\omega$-psh function. In this case there exists $x \in X$ with $\phi_X(x)< \phi(x)$, and hence $B_k$ becomes exponentially small as $k \to \infty$.   
\end{remark}

\subsection{H\"older regularity of extremal plurisubharmonic envelopes}

Let $\alpha \in (0,1]$ and $Y$ be a metric space.  We denote by $\mathcal{C}^{0,\alpha}(Y)$ the space of functions on $Y$ of finite $\mathcal{C}^{0,\alpha}$-norm. If $0<\alpha<1$, then we also write $\mathcal{C}^\alpha$ for $\mathcal{C}^{0,\alpha}$. The following notion introduced in \cite{DMN} will play a crucial role for us. 

\begin{definition} For $\alpha \in (0,1]$ and $\alpha' \in (0,1],$ a  non-pluripolar compact $K$ is said to be $(\mathcal{C}^{0,\alpha}, \mathcal{C}^{0,\alpha'})$-regular if 
for any positive constant $C,$ the set $\{\phi_K: \phi \in \mathcal{C}^{0,\alpha}(K) \text{ and } \|\phi\|_{\mathcal{C}^{0,\alpha}(K)} \le C\}$ is a bounded subset of $\mathcal{C}^{0,\alpha'}(X).$
\end{definition}

The following provides examples for the last notion.

\begin{theorem}[{\cite[Theorem 2.3]{Vu_feketepoint}}]
\label{the1CalphaClapharegu} 
Let $\alpha$ be 
an arbitrary number in $(0,1).$ Then any  compact generic 
nondegenerate $\mathcal{C}^5$ piecewise-smooth submanifold $K$ 
of $X$ is $(\mathcal{C}^{0,\alpha}, \mathcal{C}^{0,\alpha/2})$-regular.  
Moreover if $K$ has no singularity, then  $K$ is 
$(\mathcal{C}^{0,\alpha}, \mathcal{C}^{0,\alpha})$-regular.
\end{theorem}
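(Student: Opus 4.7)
The strategy is to combine the analytic-disc construction of Proposition \ref{pro_familydiscK} with the boundary H\"older estimate of Lemma \ref{le_passagetogeneralcase3}. Fix $\phi \in \mathcal{C}^{0,\alpha}(K)$ with $\|\phi\|_{\mathcal{C}^{0,\alpha}(K)} \le C_0$; the goal is to produce a bound on $\|\phi_K\|_{\mathcal{C}^{0,\alpha/2}(X)}$ depending only on $C_0$ and the geometric data of $K$, and, in the boundaryless smooth case, to improve the exponent to $\alpha$.

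The first and main step is the local one-sided H\"older estimate: for every $a_0 \in K$, every $x$ sufficiently close to $a_0$, and every admissible $\omega$-psh function $\psi$ (meaning $\psi \le \phi$ on $K$), one has $\psi(x) \le \phi(a_0) + C\,\dist(x,a_0)^{\alpha/2}$. Setting $\delta := \dist(x,a_0)$, I would apply Proposition \ref{pro_familydiscK} with $a=x$ to obtain a $\mathcal{C}^2$ analytic disc $f \colon \overline{\D} \to X$ which is $[e^{-i\theta_0}, e^{i\theta_0}]$-attached to $K$, with $\|f\|_{\mathcal{C}^2} \le c_0$, $\dist(f(1),a_0) \le c_0 \delta$, and $f(z^*) = x$ for some $z^* \in \D$ with $|1 - z^*| \le \sqrt{c_0 \delta}$. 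Working locally, write $\omega = \ddc \rho$ for a smooth potential and put $v := (\psi + \rho)\circ f - \phi(a_0)$; then $v$ is subharmonic on $\D$ and uniformly bounded above independently of $\psi$. On the attached arc the $\alpha$-H\"older continuity of $\phi$, the $\mathcal{C}^2$-control on $f$, and $\dist(f(1),a_0)\le c_0\delta$ yield $\limsup_{z\to e^{i\theta}} v(z) \le C(|\theta|^\alpha + \delta^\alpha)$ for $\theta \in (-\theta_0,\theta_0)$, after absorbing the bounded smooth term $\rho\circ f$ into the constant. Lemma \ref{le_passagetogeneralcase3} with $\beta = \alpha$ then gives $v(z^*) \le C|1-z^*|^\alpha \le C'\delta^{\alpha/2}$, whence $\psi(x) \le \phi(a_0) + C'\delta^{\alpha/2}$; taking the supremum over admissible $\psi$ produces the claimed local bound on $\phi_K$.

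The second step is to globalise this pointwise estimate near $K$ into a genuine $\mathcal{C}^{0,\alpha/2}$-modulus on all of $X$. For this I would invoke a standard Kiselman--Demailly sup-convolution regularisation of the bounded $\omega$-psh function $\phi_K$: for small $\delta>0$ one forms $\hat\phi_{K,\delta}$, which is $(1+O(\delta))\omega$-psh, dominates $\sup_{d(y,x)\le\delta}\phi_K(y)$ up to an $O(\delta^2)$ error, and decreases to $\phi_K$ as $\delta \to 0$. The local estimate of the previous paragraph, combined with the H\"older continuity of $\phi$ on $K$, forces $\hat\phi_{K,\delta} \le \phi + C\delta^{\alpha/2}$ on $K$; comparing with the extremal function in the slightly enlarged K\"ahler class $[(1+O(\delta))\omega]$ and then letting that class revert to $[\omega]$ as $\delta\to 0$ yields $\sup_X(\hat\phi_{K,\delta} - \phi_K) \le C\delta^{\alpha/2}$, which is equivalent to $\phi_K \in \mathcal{C}^{0,\alpha/2}(X)$ with the desired norm control.

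For the boundaryless smooth case, Proposition \ref{pro_familydiscK} delivers the sharper localisation $|1-z^*|\le c_0\delta$; running the same argument with $\beta = \alpha$ in Lemma \ref{le_passagetogeneralcase3} upgrades the exponent to $\alpha$ directly, with no loss. The main obstacle I foresee is the globalisation step above: the analytic-disc argument only controls $\phi_K$ relative to $\phi$ in a neighbourhood of $K$, and turning that into a two-sided modulus of continuity on all of $X$ requires the sup-convolution to remain compatible with the $\omega$-psh constraint under an arbitrarily small enlargement of the K\"ahler class. I also note that CR genericity enters essentially through Proposition \ref{pro_familydiscK}: without it, any limit of tangent spaces of $K$ would sit in a complex hyperplane and no analytic disc transverse enough to $K$ could be produced.
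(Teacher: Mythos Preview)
The paper does not prove this theorem; it is merely cited as \cite[Theorem 2.3]{Vu_feketepoint}. So there is no in-paper proof to compare against line by line. That said, the paper does reveal the structure of the cited argument in the proof of Proposition~\ref{pro-hoituC0phiKkphiK}: one constructs a H\"older continuous extension $\tilde\phi$ of $\phi$ from $K$ to all of $X$ such that $\phi_K = \tilde\phi_X$, thereby reducing to the case $K=X$ already handled in \cite{DMN}. The analytic-disc construction (Proposition~\ref{pro_familydiscK}) together with Lemma~\ref{le_passagetogeneralcase3} is exactly what furnishes this extension and the equality $\phi_K=\tilde\phi_X$.

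Your sketch is correct in substance and uses the same two ingredients. The local step is fine; the only cosmetic point is that the $\delta^\alpha$ contribution on the attached arc and the smooth potential $\rho\circ f$ should be subtracted off before invoking Lemma~\ref{le_passagetogeneralcase3}, which you effectively do. The organizational difference is in your globalization step: you propose to apply Demailly--Kiselman regularisation directly to $\phi_K$ and compare with the envelope in a slightly enlarged class, whereas the cited proof first packages the local disc estimate into a global H\"older weight $\tilde\phi$ on $X$ and then invokes the already-established $(\mathcal C^{0,\alpha},\mathcal C^{0,\alpha})$-regularity of $X$ itself. Both routes rely on the same regularisation machinery underneath; the reduction $\phi_K=\tilde\phi_X$ has the advantage of cleanly isolating the two difficulties (geometry of $K$ versus regularity of envelopes on $X$) and avoids having to redo the K\"ahler-class perturbation argument that you correctly flag as the delicate point.
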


The following remark follows  immediately from  Proposition \ref{pro-hopcuaK} and the proof of \cite[Theorem 2.3]{Vu_feketepoint}.

\begin{remark}  If $K$ is as in Proposition \ref{pro-hopcuaK}, then  $K$ is  also $(\mathcal{C}^{0,\alpha}, \mathcal{C}^{0,\alpha})$-regular for $0< \alpha <1$. The union of a finite number of open subsets with smooth boundary in $X$ is an example of such $K$.
\end{remark}

If $K=X$, and $\phi \in \mathcal{C}^{0,1}$, then it was shown in 
\cite{Chu-Zhou-optimal-envelope} that $\phi_X \in \mathcal{C}^{0,1}$, 
hence $X$ is $(\mathcal{C}^{0,1}, \mathcal{C}^{0,1})$-regular; 
see also \cite{Berman-C11regula,Chu-Zhou-optimal-envelope,Tosatti-envelop}
for more information. In the case where $K=X$ or $K$ is an open subset 
with smooth boundary in $X$ it was proved in \cite{DMN} 
that $K$ is $(\mathcal{C}^{0,\alpha}, \mathcal{C}^{0,\alpha})$-regular 
for $0< \alpha <1$. This was extended for $K$ as in the statement of
Theorem \ref{the1CalphaClapharegu} in \cite[Theorem 2.3]{Vu_feketepoint}; 
see also \cite{Lu-To-Phung}. We don't know if 
Theorem \ref{the1CalphaClapharegu} holds for $\alpha=1$. 
Here is a partial result whose proof is exactly as
of \cite[Theorem 2.3]{Vu_feketepoint} by using 
\eqref{ine_passagetogeneralcase4} instead of \eqref{ine_passagetogeneralcase4cu}
(and noting that the analytic disc in Proposition \ref{pro_familydiscK} 
is $\mathcal{C}^2(\overline\D)$, hence in particular, 
is $\mathcal{C}^{1,\delta}$ for some $\delta \in (0,1]$).

\begin{theorem} \label{the1CalphaClapharegunanchinhquy} 
Let $\delta \in (0,1), C_1>0$ be constants. 
Let $K$ be a compact generic $\mathcal{C}^5$ smooth submanifold
(without boundary) of $X$. Then there exists a constant $C_2>0$
such that for every $\phi \in \mathcal{C}^{1, \delta}(K)$ with
$\|\phi\|_{\mathcal{C}^{1,\delta}} \le C_1$, then 
$\|\phi_K\|_{\mathcal{C}^{0,1}} \le C_2$.  
\end{theorem}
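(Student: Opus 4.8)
The plan is to follow the strategy already used for Theorem \ref{the1CalphaClapharegu} (that is, \cite[Theorem 2.3]{Vu_feketepoint}), but feed in the stronger boundary estimate \eqref{ine_passagetogeneralcase4} from Lemma \ref{le_passagetogeneralcase3} in place of \eqref{ine_passagetogeneralcase4cu}, which is the only place where the H\"older exponent $\beta$ (rather than a Lipschitz bound) enters. Fix $\phi \in \mathcal{C}^{1,\delta}(K)$ with $\|\phi\|_{\mathcal{C}^{1,\delta}} \le C_1$. Since $K$ is smooth without boundary, hence regular, we have $\phi_K = \phi_K^*$, so $\phi_K$ is an $\omega$-psh function that is bounded and $\le \phi$ on $K$; in particular $\phi_K|_K \le \phi$. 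The goal is a uniform Lipschitz bound $|\phi_K(x) - \phi_K(y)| \le C_2 \dist(x,y)$ for all $x,y \in X$, with $C_2$ depending only on $X,\omega,K,\delta,C_1$.

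First I would reduce, by a standard covering and continuity argument, the Lipschitz estimate to two local statements: (a) a comparison of $\phi_K$ with $\phi$ near $K$, and (b) an interior Lipschitz bound for $\phi_K$ on $X$. For (b), one uses that $\phi_K^* = \phi_K$ is $\omega$-psh and globally bounded on the compact $X$; together with the smoothing/reparametrization trick of \cite[Theorem 2.3]{Vu_feketepoint} (sliding $\phi_K$ along a family of automorphisms or using a partition of unity against the curvature of $L$), this reduces everything to controlling $\sup_{\dist(x,x')\le t}\big(\phi_K(x')-\phi_K(x)\big)$, and the one-sided excess is estimated by the disc argument. Concretely, for $a \in X$ near $K$ let $a_0' \in K$ realize $\delta := \dist(a, K) = \dist(a, a_0')$. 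By Proposition \ref{pro_familydiscK} (smooth case, so $a_0'$ lies in the regular part) there is a $\mathcal{C}^2$ analytic disc $f\colon \overline{\D}\to X$ with $\|f\|_{\mathcal{C}^2}\le c_0$, $f([e^{-i\theta_0},e^{i\theta_0}])\subset K$, $\dist(f(1),a_0')\le c_0\delta$, and $z^*\in\D$ with $f(z^*)=a$ and $|1-z^*|\le c_0\delta$. Set $\psi := (\phi_K\circ f) - \phi(a_0') - L\cdot(\text{a } \mathcal{C}^{1,\delta}\text{ correction from }\phi)$; more precisely I would let $g(e^{i\theta})$ denote the boundary value $\phi(f(e^{i\theta})) - \phi(a_0')$ on $[e^{-i\theta_0},e^{i\theta_0}]$, which lies in $\mathcal{C}^{1,\delta}$ with norm bounded in terms of $C_1$ and $c_0$ (since $\phi\in\mathcal{C}^{1,\delta}(K)$ and $f\in\mathcal{C}^2$) and with $|g(1)|\le \|\phi\|_{\mathcal{C}^{0,1}}\,c_0\delta$; after subtracting a harmless affine-in-$\delta$ constant we may arrange $g(1)=0$. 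Then $\phi_K\circ f - \phi(a_0')$ is subharmonic on $\D$ (composition of $\omega$-psh with holomorphic, modulo the bounded smooth local potential of $\omega$, which contributes an $O(|1-z|)$ term absorbable into the conclusion), bounded above, and has boundary values $\le g$ on the arc. Lemma \ref{le_passagetogeneralcase3}, inequality \eqref{ine_passagetogeneralcase4}, gives $(\phi_K\circ f)(z) - \phi(a_0') \le C|1-z|$ on $\D$; substituting $z=z^*$ yields $\phi_K(a) \le \phi(a_0') + C c_0 \delta \le \phi_K(a_0') + (\text{osc of }\phi) + C'\delta$, and since $\phi$ is itself Lipschitz this gives $\phi_K(a) - \phi_K(a_0') \le C'' \dist(a,K)$.

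The remaining bookkeeping — upgrading this one-sided, near-$K$ excess bound to a genuine global Lipschitz bound on all of $X$ — is exactly the argument of \cite[Theorem 2.3]{Vu_feketepoint}: one uses that any $\omega$-psh function which is Lipschitz "from $K$" and bounded is Lipschitz on $X$, via the sup-convolution / automorphism-sliding device, using positivity of $L$ to correct the failure of $\omega$-psh functions to be closed under small translations. I would simply cite that the proof is verbatim that of \cite[Theorem 2.3]{Vu_feketepoint} with \eqref{ine_passagetogeneralcase4cu} replaced by \eqref{ine_passagetogeneralcase4} and with the observation that the disc $f$ of Proposition \ref{pro_familydiscK} is $\mathcal{C}^2$, hence $\mathcal{C}^{1,\delta'}$ for some $\delta'$, so that the hypothesis of the second part of Lemma \ref{le_passagetogeneralcase3} is met. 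The main obstacle — and the reason this is only a "partial result" stopping at $\mathcal{C}^{0,1}$ rather than $\mathcal{C}^{1,\delta}$ — is the harmonic-extension loss recorded in Lemma \ref{le_passagetogeneralcase3}: the harmonic extension of a $\mathcal{C}^{1,\delta}$ boundary function is only $\mathcal{C}^{1,\delta}$, so the disc method transfers $\mathcal{C}^{1,\delta}$ boundary regularity of $\phi$ to at best Lipschitz (not $\mathcal{C}^{1,\delta}$) control of the envelope $\phi_K$; improving this would require a genuinely different, higher-order argument and is left open in the paper.
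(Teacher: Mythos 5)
Your proposal matches the paper's own (very brief) argument exactly: the paper states that the proof is verbatim that of \cite[Theorem 2.3]{Vu_feketepoint} with \eqref{ine_passagetogeneralcase4} substituted for \eqref{ine_passagetogeneralcase4cu}, together with the observation that the analytic disc from Proposition \ref{pro_familydiscK} is $\mathcal{C}^2(\overline\D)$ and hence $\mathcal{C}^{1,\delta'}$, and this is precisely the plan you carry out. Your elaboration of the disc argument (pulling $\phi_K$ back to the disc, comparing it to the $\mathcal{C}^{1,\delta}$ boundary data $g$, normalizing $g(1)=0$, and substituting $z=z^*$) and your closing remark on why the method cannot transfer more than Lipschitz regularity are consistent with the paper's intent.
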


We mention at this point an example in \cite{Sadullaev-example}
of a domain $K$ with $\mathcal{C}^0$ boundary but $(K,\phi)$ is not regular even 
for $\phi =0$.  Applying Theorem \ref{the1CalphaClapharegunanchinhquy} 
to $X= \P^n$ we obtain the following result that 
implies \cite[Conjecture 6.2]{Sadullaev-Zeriahi} as a special case.

\begin{theorem} \label{the1CalphaClaphareguCn} 
Let $\alpha \in (0,1]$. Let $K$ be a compact generic nondegenerate 
$\mathcal{C}^5$ piecewise-smooth submanifold in $\C^n$. Let 
$$V_K:= \sup \{\psi \in \mathcal{L}(\C^n): \psi \le 0 \, \text{ on } K\}.$$
Then $V_K \in \mathcal{C}^{1/2}(\C^n)$. 
Additionally if $K$ has no singularity, then $V_K \in \mathcal{C}^{0,1}(\C^n)$. 
\end{theorem}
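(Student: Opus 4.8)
The plan is to deduce Theorem \ref{the1CalphaClaphareguCn} directly from the compact K\"ahler statements, Theorems \ref{the1CalphaClapharegu} and \ref{the1CalphaClapharegunanchinhquy}, by transferring everything to $X=\P^n$ via the correspondence \eqref{corres-Leleongclass}. First I would take $L=\mathcal{O}(1)$ with $h_0=h_{FS}$, so that $\omega=\omega_{FS}$, and recall that under the identification $u \longleftrightarrow u-\frac12\log(1+|z|^2)$ the class $\mathcal{L}(\C^n)$ corresponds to $\PSH(\P^n,\omega_{FS})$. Under this dictionary, $V_K$ corresponds to $\phi_K$ for the weight $\phi := -\frac12\log(1+|z|^2)|_K$ on $K$ (viewed as a compact subset of $\C^n\subset\P^n$). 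Since $K$ is compact and contained in $\C^n$, this weight $\phi$ is real analytic in a neighbourhood of $K$; in particular it lies in $\mathcal{C}^{0,\alpha}(K)$ for every $\alpha\in(0,1)$ and in $\mathcal{C}^{1,\delta}(K)$ for every $\delta\in(0,1)$, with norms controlled by $\dist(K,\P^n\setminus\C^n)$.

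The second step is to feed this $\phi$ into the two regularity theorems. For the first assertion, Theorem \ref{the1CalphaClapharegu} with (say) $\alpha=1/2$ shows that $K$ is $(\mathcal{C}^{0,1/2},\mathcal{C}^{0,1/4})$-regular; more efficiently, applied with exponent $\alpha$ close to $1$ it gives $\phi_K\in\mathcal{C}^{0,\alpha/2}$ for every $\alpha<1$. Actually to land exactly on $\mathcal{C}^{1/2}$, I would invoke the theorem with $\alpha$ arbitrarily close to $1$: since $\phi$ is smooth it belongs to $\mathcal{C}^{0,\alpha}(K)$ for all such $\alpha$, and $(\mathcal{C}^{0,\alpha},\mathcal{C}^{0,\alpha/2})$-regularity yields $\phi_K\in\mathcal{C}^{0,\alpha/2}$; taking $\alpha\to 1$ (or just $\alpha=1$- $\epsilon$ and absorbing) gives $\phi_K\in\mathcal{C}^{1/2}$. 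Then I would translate back: $V_K = \phi_K + \frac12\log(1+|z|^2)$ on $\C^n$, and since the function $z\mapsto\frac12\log(1+|z|^2)$ is $\mathcal{C}^\infty$ (hence locally $\mathcal{C}^{1/2}$) on $\C^n$, one concludes $V_K\in\mathcal{C}^{1/2}(\C^n)$. Here one should be a little careful that $\mathcal{C}^{1/2}(\C^n)$ is understood locally (or on compacts), since $V_K$ itself grows like $\log|z|$ at infinity; the H\"older regularity is an interior/local statement, matching how $\mathcal{C}^{0,\alpha}(X)$ is used on the compact $\P^n$ side, so the pullback of the local H\"older norm near $\P^n\setminus\C^n$ transfers correctly.

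For the second assertion, when $K$ is smooth without boundary, the smooth weight $\phi$ above lies in $\mathcal{C}^{1,\delta}(K)$ for any $\delta\in(0,1)$, so Theorem \ref{the1CalphaClapharegunanchinhquy} applies and gives $\phi_K\in\mathcal{C}^{0,1}(X)$, i.e.\ $\phi_K$ is Lipschitz near $K$ and globally on $\P^n$. Translating back as before, $V_K=\phi_K+\frac12\log(1+|z|^2)$ is then Lipschitz on every compact subset of $\C^n$, giving $V_K\in\mathcal{C}^{0,1}(\C^n)$. Alternatively, one could note that Theorem \ref{the1CalphaClapharegu} already gives $(\mathcal{C}^{0,\alpha},\mathcal{C}^{0,\alpha})$-regularity in the no-boundary case, which combined with the smoothness of $\phi$ would give $\phi_K\in\mathcal{C}^{0,\alpha}$ for every $\alpha<1$ but not quite $\mathcal{C}^{0,1}$; so the sharp Lipschitz bound genuinely needs the $\mathcal{C}^{1,\delta}\to\mathcal{C}^{0,1}$ improvement of Theorem \ref{the1CalphaClapharegunanchinhquy}. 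I do not anticipate a serious obstacle here: the whole proof is a translation argument, and the only point requiring a moment's care is making precise that the H\"older/Lipschitz norms of $V_K$ are to be read locally on $\C^n$ (equivalently, that the correction term $\frac12\log(1+|z|^2)$ is harmless because it is smooth), together with checking that the coordinate weight $\phi$ indeed has finite $\mathcal{C}^{0,\alpha}$ and $\mathcal{C}^{1,\delta}$ norms on the compact $K\subset\C^n$, which is immediate.
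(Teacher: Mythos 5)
Your reduction to $\P^n$ via the correspondence $u \longleftrightarrow u-\frac12\log(1+|z|^2)$ is the right move, and the treatment of the Lipschitz case (smooth $K$) through Theorem \ref{the1CalphaClapharegunanchinhquy} is correct: the weight $\phi=-\frac12\log(1+|z|^2)|_K$ is smooth on the compact $K\subset\C^n$, hence $\mathcal C^{1,\delta}$, and the theorem gives $\phi_K\in\mathcal C^{0,1}(\P^n)$, so $V_K=\phi_K+\frac12\log(1+|z|^2)$ is Lipschitz on compacts of $\C^n$.

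The first assertion, however, has a genuine gap. You try to reach $\mathcal C^{0,1/2}$ by letting $\alpha\to 1$ in the $(\mathcal C^{0,\alpha},\mathcal C^{0,\alpha/2})$-regularity of Theorem \ref{the1CalphaClapharegu}, but that theorem is only asserted for $\alpha<1$, and there is no uniformity of the constants as $\alpha\to 1$. In fact the constant \emph{must} degenerate: it ultimately comes from Lemma \ref{le_passagetogeneralcase3}, where the constant $C$ in \eqref{ine_passagetogeneralcase4cu} depends on $\beta$ and blows up as $\beta\to 1$ (the proof rests on the $\mathcal C^{0,\beta}$ bound for the harmonic extension of a boundary datum, and that bound fails at $\beta=1$, as noted explicitly after Lemma \ref{le_passagetogeneralcase3}). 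Concluding $\phi_K\in\mathcal C^{0,1/2}$ from $\phi_K\in\bigcap_{\beta<1/2}\mathcal C^{0,\beta}$ with $\beta$-dependent constants is not valid; the phrase ``take $\alpha=1-\epsilon$ and absorb'' hides exactly this unjustified passage to the endpoint.

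What is actually used in the paper (``Applying Theorem \ref{the1CalphaClapharegunanchinhquy} to $X=\P^n$'', understood together with the preceding discussion) is the \emph{method} of Theorem \ref{the1CalphaClapharegunanchinhquy} rather than Theorem \ref{the1CalphaClapharegu}: because the weight $\phi$ is $\mathcal C^{1,\delta}$ one replaces the H\"older inequality \eqref{ine_passagetogeneralcase4cu} by the Lipschitz-type inequality \eqref{ine_passagetogeneralcase4}, so that the one-variable bound along the analytic disc becomes $\psi(z)\lesssim |1-z|$ with no loss at the endpoint. Then in the piecewise-smooth case Proposition \ref{pro_familydiscK} only guarantees the point $z^*$ with $f(z^*)=a$ satisfies $|1-z^*|\lesssim\sqrt{\delta}$ (as opposed to $|1-z^*|\lesssim\delta$ on the regular part), and substituting gives precisely the exponent $1/2$: $\phi_K(a)-\phi_K(a_0')\lesssim|1-z^*|\lesssim\dist(a,K)^{1/2}$. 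That is how the sharp $\mathcal C^{1/2}$ bound for piecewise-smooth $K$ is obtained, and it is not recoverable from Theorem \ref{the1CalphaClapharegu} alone.
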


We note that the fact that $V_K \in \mathcal{C}^{0,1}(\C^n)$ when $K$ 
has no singularity was proved in \cite{Sadullaev-Zeriahi} 
(and as can be seen from the above discussion, this property also 
follows essentially from \cite{Vu_feketepoint}).
As a direct consequence of Theorem \ref{the1CalphaClaphareguCn}, 
we record here a Bernstein-Markov type inequality of independent interest. 
We will not use it anywhere in the paper.

\begin{theorem}\label{the-Bernstein-totally-real} 
Let $K$ be a compact generic nondegenerate 
$\mathcal{C}^5$ piecewise-smooth submanifold in $\C^n$. 
Then there exists a constant $C>0$ such that for every 
complex polynomial $p$ on $\C^n$ we have
\begin{equation}\label{eq:grad1}
\|\nabla p\|_{L^\infty(K)} \le C (\deg p)^{2}\|p\|_{L^\infty(K)}.
\end{equation} 
If additionally $K$ has no singularity (e.g, $K= \S^{2n-1}$), then 
\begin{equation}\label{eq:grad2}
\|\nabla p\|_{L^\infty(K)} \le C \deg p \,\|p\|_{L^\infty(K)}.
\end{equation}  
\end{theorem}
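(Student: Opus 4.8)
The plan is to derive the Bernstein-Markov inequalities \eqref{eq:grad1} and \eqref{eq:grad2} from the Hölder, resp.\ Lipschitz, regularity of the extremal function $V_K$ supplied by Theorem \ref{the1CalphaClaphareguCn}, combined with the standard fact that polynomials grow at a controlled rate away from $K$ governed by $V_K$. Concretely, for a complex polynomial $p$ of degree $d$ with $\|p\|_{L^\infty(K)}\le 1$, the function $\frac{1}{d}\log|p|$ is in $\mathcal{L}(\C^n)$ (after the usual homogenization, $\frac1d\log|p(z)|-\log|z|$ is bounded near infinity) and is $\le 0$ on $K$; hence by definition of $V_K$ we get $\frac1d\log|p|\le V_K$ on $\C^n$, i.e.
\begin{equation}\label{eq:growthbound}
|p(z)|\le e^{d\,V_K(z)}\quad\text{for all }z\in\C^n.
\end{equation}

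The second step is a local argument at a point $x_0\in K$. Since $V_K(x_0)=0$ (as $x_0\in K$ and $V_K\ge 0$ everywhere, $V_K$ being the upper envelope of functions $\le 0$ on $K$ that are however allowed to be positive off $K$; more precisely $V_K\ge 0$ on $\C^n$ and $V_K=0$ on $K$ by regularity), the Hölder bound $V_K\in\mathcal{C}^{1/2}$ gives $V_K(z)\le C_0|z-x_0|^{1/2}$ for $z$ near $x_0$, and in the smooth case $V_K(z)\le C_0|z-x_0|$. Plugging this into \eqref{eq:growthbound} and applying the Cauchy estimates on a polydisc of radius $r$ centered at $x_0$ yields
\begin{equation}\label{eq:cauchy}
|\nabla p(x_0)|\le \frac{C}{r}\sup_{|z-x_0|\le r}|p(z)|\le \frac{C}{r}\,e^{d\,C_0 r^{1/2}}\,,
\end{equation}
in the general case, and with $e^{dC_0 r}$ in the smooth case. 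Optimizing the right-hand side over $r$: in the smooth case one takes $r\sim 1/d$, giving $|\nabla p(x_0)|\lesssim d$; in the general case one takes $r^{1/2}\sim 1/d$, i.e.\ $r\sim 1/d^2$, giving $|\nabla p(x_0)|\lesssim d^2$. Taking the supremum over $x_0\in K$ and rescaling by $\|p\|_{L^\infty(K)}$ gives \eqref{eq:grad1} and \eqref{eq:grad2}.

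I expect the main technical point to be a minor one rather than a deep obstacle: one must be careful that the Cauchy estimate in \eqref{eq:cauchy} is taken over a genuine polydisc (or ball) in $\C^n$ on which \eqref{eq:growthbound} applies, and that $r$ can be taken uniformly bounded below over $x_0\in K$ by compactness so that the local Hölder/Lipschitz bound on $V_K$ holds with a uniform constant $C_0$ independent of $x_0$ — this uses the global statement of Theorem \ref{the1CalphaClaphareguCn} that $V_K\in\mathcal{C}^{1/2}(\C^n)$ (resp.\ $\mathcal{C}^{0,1}(\C^n)$), not merely a pointwise bound. One also needs the elementary observation that $V_K\ge0$ (so the growth bound degenerates precisely on $K$) and that $V_K$ vanishes on $K$, which both follow from non-pluripolarity and regularity of $K$ established earlier, and the homogenization step turning $p$ into an element of $\mathcal{L}(\C^n)$ after dividing by $\deg p$, which is completely standard. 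No analytic disc machinery is needed here; all the hard work has been absorbed into Theorem \ref{the1CalphaClaphareguCn}.
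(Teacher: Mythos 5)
Your proposal is correct and follows the same route as the paper's proof: both start from the Bernstein--Walsh inequality $|p| \le e^{(\deg p)\,V_K}\,\max_K |p|$ (obtained by viewing $\frac{1}{\deg p}\log(|p|/\max_K|p|)$ as a competitor for $V_K$), then invoke the H\"older, resp.\ Lipschitz, regularity of $V_K$ from Theorem~\ref{the1CalphaClaphareguCn} together with $V_K = 0$ on $K$, and finally apply a Cauchy estimate on a disc of radius optimized to $r \approx (\deg p)^{-2}$, resp.\ $r\approx(\deg p)^{-1}$. The only cosmetic difference is that the paper applies the Cauchy formula on one-variable coordinate discs $a + r\D e_j$ rather than a full polydisc, which changes nothing in substance.
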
 

Note that the exponent of $\deg p$ is optimal as it is well-known for
the classical Markov and Bernstein inequalities in dimension one. 
The above result was known when $K$ is algebraic in $\R^n$, 
see \cite{Berman-OrtegaCerda,Bos-Levenberg-Milman-Taylor}. 
We underline that inequalities similar to those in 
Theorem \ref{the-Bernstein-totally-real} also hold for other situations 
(with the same proof), for example, $K= \S^{n-1} \subset \R^n$ 
and considering $K$ as a maximally totally 
real submanifold in the complexification of $\S^{n-1}$. 
 
Markov (or Bernstein) type inequalities are a subject of great interest in 
approximation theory. There is a large literature on this topic, e.\,g.\ 
\cite{Berman-OrtegaCerda,Bos-Levenberg-Milman-Taylor,
Bos-Levenberg-Milman-Taylor2,Brudnyi-bernstein-ine-sub,
Brudnyi,Coman-Poletsky,Pierzchala,Yomdin-smooth-parametrization}, 
to cite just a few.

\proof  Let $$\phi_K:= \sup\{\psi\in\mathcal{L}(\C^n):\sup_K \psi \le 0\}$$
which is $\mathcal{C}^{1}$ if $K$ has no singularity or
$\mathcal{C}^{1/2}$ in general  by Theorem \ref{the1CalphaClaphareguCn}. 
Let $p$ be a complex polynomial in $\C^n$. Put $k:= \deg p$. 
Since $\frac{1}{k} (\log |p|- \log \max_K|p|)$ is a candidate 
in the envelope defining $\phi_K$, we get
$$|p| \le e^{k \phi_K} \max_K |p|$$
on $\C^n$. 
We use the same notation $C$ to denote a constant depending only on $K,n$.  
Let $a=(a_1,\ldots,a_n) \in K \subset \C^n$. Let $r>0$ be a small constant.  
Consider the analytic disc $D_a:= (a_1+ r\D, a_2,\ldots, a_n)$. 
Applying the Cauchy formula to the restriction of $p$ to $D_a$ shows that 
$$|\partial_{z_1}p(a)| \le r^{-1} \max_{D_a} |p|\le
r^{-1} (\max_K |p|) \max_{D_a} e^{k \phi_K}.$$
Since $\phi_K = 0$ on $K$, using $\mathcal{C}^{1/2}$ regularity
of $\phi_K$ gives
$$|\partial_{z_1}p(a)| \le r^{-1} \max_{D_a} |p|\le 
r^{-1} (\max_K |p|) e^{C k r^{1/2}}$$
for some constant $C>0$ independent of $p$ and $a$. 
Choosing $r= k^{-2}$ in the last inequality yields
$$|\partial_{z_1}p(a)| \le C k^{2} \max_{D_a} |p|.$$
Similarly we also get  $$|\partial_{z_j}p(a)| \le 
C k^{2} \max_{D_a} |p|$$
for every $1 \le j \le n$. Hence the first desired inequality
\eqref{eq:grad1} 
for general $K$. When $K$ has no singularity, the arguments are similar. 
This finishes the proof.  
\endproof

Here is a quantitative version of Lemma \ref{le-pointwiseconvergence}.
 
\begin{proposition} \label{pro-hoituC0phiKkphiK}  
Let $K$ be a compact generic nondegenerate $\mathcal{C}^5$ 
piecewise-smooth submanifold of $X$.  Let $\phi$ be a 
H\"older continuous function on $K$. Then, we have 
$$\big\| \phi_{K,k}- \phi_K\big\|_{\cali{C}^0(X)}=
O\left(\frac{\log k}{k}\right).$$
\end{proposition}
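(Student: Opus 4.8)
The plan is to upgrade the qualitative pointwise convergence of Lemma \ref{le-pointwiseconvergence} to an explicit rate by combining three ingredients: the H\"older regularity of $\phi_K$ coming from Theorem \ref{the1CalphaClapharegu}, Demailly's analytic approximation as used in the proof of Lemma \ref{le-pointwiseconvergence}, and the subadditivity inequality \eqref{ine-subaditivephi_k}. The inequality $\phi_{K,k}\le \phi_K$ is trivial, so the whole issue is a lower bound $\phi_{K,k}\ge \phi_K - C\log k/k$ uniformly on $X$. Since $\phi$ is H\"older on $K$, Theorem \ref{the1CalphaClapharegu} gives that $\phi_K\in\mathcal{C}^{0,\alpha'}(X)$ for some $\alpha'>0$, in particular $\phi_K$ is continuous; I will use this modulus of continuity quantitatively.

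First I would extract, from the proof of Lemma \ref{le-pointwiseconvergence}, a \emph{quantitative} version of Demailly's approximation: applying \cite[Theorem 14.21]{Demailly_analyticmethod} to $(1-\epsilon)\phi_K^*+$ (a small multiple of a fixed potential) with $\epsilon = \epsilon_k \to 0$ chosen as a suitable negative power of $k$ (say $\epsilon_k\sim 1/\log k$ or $\sim k^{-c}$), one obtains for each large $k$ a section $\sigma\in H^0(X,L^k)$ with $\tfrac1k\log|\sigma|_{h_0^k}\ge (1-\epsilon_k)\phi_K - C\epsilon_k - C\tfrac{\log k}{k}$ on $X$, the $C\log k /k$ term being the standard loss in the Ohsawa–Takegoshi/Demailly estimate (dimension of $H^0(X,L^k)$ grows polynomially, so $\tfrac1k\log d_k = O(\log k/k)$). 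The remaining task is to correct this section so that it becomes an admissible competitor in the envelope defining $\phi_{K,k}$, i.e. so that $\sup_K(|\sigma|_{h_0^k}e^{-k\phi})\le 1$: on $K$ we have $(1-\epsilon_k)\phi_K\le (1-\epsilon_k)\phi \le \phi + \epsilon_k\|\phi\|_{L^\infty(K)}$, so after multiplying $\sigma$ by the scalar $\exp(-k\sup_K(\cdot))$ we lose only another $O(\epsilon_k)$, giving $\phi_{K,k}\ge (1-\epsilon_k)\phi_K - C\epsilon_k - C\tfrac{\log k}{k}$, hence $\phi_{K,k}\ge \phi_K - C\epsilon_k - C\tfrac{\log k}{k}$ using boundedness of $\phi_K$.

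At this stage I would have $\phi_{K,k}\ge \phi_K - C(\epsilon_k + \log k /k)$, which with $\epsilon_k\sim 1/\log k$ only yields $O(1/\log k)$ — not good enough. To reach $O(\log k/k)$ I would feed this into the subadditivity trick from the proof of Lemma \ref{le-hoitudeuenvelop}: from \eqref{ine-subaditivephi_k}, writing $k = mq + s$ with $0\le s<m$ gives $k\phi_{K,k}\ge mq\,\phi_{K,m} + s\phi_{K,s}\ge mq\,\phi_{K,m} - Cs$, so $\phi_{K,k}\ge \tfrac{mq}{k}\phi_{K,m} + O(m/k)\ge \phi_{K,m} - C\bigl(\tfrac{m}{k}+ \|\phi_K\|_{L^\infty}\tfrac{m}{k}\bigr) - \tfrac{mq}{k}\cdot C(\epsilon_m+\log m/m)$ after inserting the bound at level $m$. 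Choosing $m = m(k)$ optimally — roughly $m\sim k/\log k$ or a fixed power $k^\beta$, balancing the "loss from subadditivity" term $O(m/k)$ against the "approximation error at level $m$" term $O(\epsilon_m + \log m/m)$ — produces the rate $O(\log k/k)$. One checks the arithmetic: with $\epsilon_m \sim \log m/m$ the level-$m$ error is $O(\log m / m)$, and $\tfrac{m}{k}\le \tfrac{\log k}{k}$ precisely when $m\le \log k$, but then $\log m/m\gtrsim \log\log k/\log k$ is too big, so in fact the balance wants $m$ comparable to $k$ times a slowly varying factor; a cleaner route is to take $m\sim\sqrt k$ giving error $O(\log k/\sqrt k)$, still not enough, which signals that the subadditivity step must be iterated or that the Demailly error itself must be pushed to genuinely $O(\log m/m)$ with $\epsilon_m\sim \log m/m$ \emph{and} the H\"older modulus of $\phi_K$ used to absorb $\epsilon_m\phi_K$ against the gap — this is where the real work lies.

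\textbf{Main obstacle.} The delicate point is controlling the error term in Demailly's approximation sharply enough, and in particular making the "$\epsilon$-perturbation" loss $\epsilon_k\|\phi_K\|$ of order $\log k /k$ rather than $1/\log k$; naively $\epsilon_k$ cannot be taken smaller than roughly $\log k/k$ because the Bergman-type estimate in \cite[Theorem 14.21]{Demailly_analyticmethod} degenerates (the constant blows up like a negative power of $\epsilon_k$, contributing $\tfrac1k\log(1/\epsilon_k)$), so one must check that $\epsilon_k = c\log k/k$ is admissible and that the resulting constant only costs another $O(\log k/k)$ — this balance, together with using the H\"older continuity of $\phi_K$ on $X$ (Theorem \ref{the1CalphaClapharegu}) to handle the difference $\phi_K - \phi$ near $K$, is the crux. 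The subadditivity step of Lemma \ref{le-hoitudeuenvelop} is then only needed to smooth out the dependence on $k$ and is routine once the per-level estimate is in place.
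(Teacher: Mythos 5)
Your proposal takes a genuinely different and considerably harder route than the paper's. The paper's proof is a short reduction: it takes the continuous extension $\tilde\phi$ of $\phi$ to $X$ (as built in the proof of \cite[Theorem~2.3]{Vu_feketepoint}), for which $\phi_K=\tilde\phi_X$, observes the trivial monotonicity $\phi_{K,k}\ge\tilde\phi_{X,k}$, and then simply invokes the already-known estimate $\|\tilde\phi_{X,k}-\tilde\phi_X\|_{\Cc^0}=O(\log k/k)$ for the unconstrained ($K=X$) envelope from \cite[Corollary~4.4]{Dinh-Ma-Marinescu}. This bypasses entirely the Demailly bookkeeping you are trying to do directly for the constrained envelope.

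As it stands, your proposal has a genuine gap. You correctly identify the crux — that one must take $\epsilon_k\sim\log k/k$ and verify the Demailly/Ohsawa--Takegoshi loss $\frac{1}{k}\log(1/\epsilon_k)$ stays $O(\log k/k)$ — but you do not carry it out, and you do not address the equally essential \emph{upper} estimate needed to make the approximating section an admissible competitor: one must show $\psi_{\epsilon_k,k}\le\phi+O(\log k/k)$ on $K$, which rests on a mean-value (sub-mean value for psh functions) comparison $\psi_{\epsilon,k}(x)\le\sup_{\B(x,r)}(1-\epsilon)\phi_K+\frac{C}{k}\log(1/r)$ together with the H\"older modulus of $\phi_K$ from Theorem~\ref{the1CalphaClapharegu}; balancing $r^{\alpha'}$ against $\frac1k\log(1/r)$ is where the H\"older regularity actually enters, and that step is absent. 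Finally, the detour through the subadditivity \eqref{ine-subaditivephi_k} is a red herring here: if the per-level Demailly estimate is already uniform in $x$ and of size $O(\log k/k)$, no subadditivity iteration is needed (that device served only to upgrade pointwise convergence to uniform convergence in Lemma~\ref{le-hoitudeuenvelop}), and your own computation shows that trying to gain by choosing an intermediate $m$ only degrades the rate. In short, your scheme is plausibly repairable, but it is not a proof, and it is much more delicate than the reduction the paper actually uses.
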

 
\proof The desired estimate was proved for $K=X$ in 
\cite[Corollary 4.4]{Dinh-Ma-Marinescu}. 
For the general case, we use the proof of 
\cite[Theorem 2.3]{Vu_feketepoint} (and \cite{DMN}). 
Let $\tilde{\phi}$ be the continuous extension of $\phi$ 
to $X$ as in the proof of  \cite[Theorem 2.3]{Vu_feketepoint}. 
It was showed there that $\phi_K= \tilde{\phi}_X$. 
On the other hand, one can check directly that 
$\phi_{K,k} \ge \tilde{\phi}_{X,k}$. 
Hence, we get
$$| \phi_{K,k}- \phi_K| = \phi_K - \phi_{K,k}  
\le \tilde{\phi}_X - \tilde{\phi}_{X,k}=
O\left(\frac{\log k}{k}\right),$$
for every $k$. This implies the conclusion.
 \endproof

\begin{proof}[End of the proof of Theorem \ref{the-convergencebergmankernel}] The desired estimate is deduced directly by using Proposition \ref{pro-hoituC0phiKkphiK}, Theorem \ref{the-strongBMintro}, and following the same arguments as in the proof of Proposition \ref{pro-uniforBergmann}. We just briefly recall here how to do it. Firstly as in the proof of Proposition \ref{pro-uniforBergmann}, we have 
$$\tilde{\phi}_{K,k}- \phi_{K,k} \ge O(k^{-1}).$$
It remains to bound from above $\tilde{\phi}_{K,k}- \phi_{K,k}$. Combining the polynomial upper bound for $B_k$ in Theorem \ref{the-strongBMintro} and (\ref{ine-BkBergman}), one gets,  for some constants $C,N>0$ independent of $k$, 
\begin{align*}
\sup_K (|s|_{h_0^k} e^{- k \phi}) \le C k^{N} \|s\|_{L^2(\mu, k \phi)}
\end{align*}
for every $s \in H^0(X, L^k)$. This coupled with (\ref{ine-sh0wahsline}) yields
\begin{align*}
|s|_{h_0^k} \le k^{N} e^{k \phi_{K,k}}
\end{align*}
on $X$. It follows that 
$$\tilde{\phi}_{K,k}=\frac{1}{2k}\log \tilde{B}_k \le
\phi_{K,k}+ N\,\frac{\log k}{k}.$$
This finishes the proof. 
\end{proof}

\section{Polynomial growth of Bergman kernel functions}

This section is devoted to the proof of 
Theorems \ref{the-bergman-smooth} and \ref{the-strongBMintro}. 

\subsection{Families of analytic discs attached to $K$}

The goal of this part is to construct suitable families of 
analytic discs partly attached to $K$. 
This is actually implicitly contained 
in \cite{Vu_MA}. We don't need all of properties of the family 
of analytic discs given in \cite{Vu_MA}. 
For readers' convenience we recall briefly the construction below.
We will only consider the case where $\dim K=n$ in this section.   

Here is our result giving the desired family of analytic discs.

\begin{theorem} \label{th-discnoboundary} 
Let $C_0>0$ be a constant. Then there exist constants $C>0,r_0>0$
and $\theta_0 \in (0,\pi/2)$ such that for every $0<t < r_0$, 
the following properties are satisfied.  
Let $p_0$ be a regular point of $K$ of distance at least $t/C_0$ 
to the singularity of $K$, and let $W_{p_0}$ be a local chart around 
$p_0$ in $X$ such that $p_0$ corresponds to the origin $0$ in $\C^n$. 
Then there is a $\mathcal{C}^2$ map 
$F: \overline \D \times \B_{n-1} \to W_{p_0}$ 
such that the following properties are fulfilled:

(i) $F(\cdot,y)$ is holomorphic for every $y \in \B_{n-1}$, and 
$$ \|DF(\xi, y)\| \le C t,$$
for every $\xi \in \D$, $y \in \B_{n-1}$.

(ii) $F(e^{i\theta},y) \in K$ for $-\theta_0 \le \theta \le \theta_0$ and $y \in \B_{n-1}$, and 
$$|F(1,y)| \le t/C_0,$$

(iii) Let $G$ denote the restriction of $F$ to $[e^{-i \theta_0}, e^{i \theta_0}] \times \B_{n-1}$. Then $G$ is bijective onto its image, and the image of $G$ is contained in $\B(p_0, t/C_0) \cap K$ (here $\B(p_0,t/C_0)$ denotes the ball centred at $p_0$ of radius $t/C_0$ in $X$),  and  
$$C^{-1} t^{n} \le \big|\det DG(e^{i\theta}, y)\big| \le C t^{n},$$     
for every $- \theta_0 \le \theta \le \theta_0$ and $y \in \B_{n-1}$.
\end{theorem}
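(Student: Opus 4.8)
The statement is a rescaled form of the family of analytic discs constructed in \cite{Vu_MA}; the plan is to make the dependence on the small parameter $t$ explicit and then read off (i)--(iii). First I would normalise the local picture at scale $t$. Since $p_0$ is a regular point of $K$ with $\dist(p_0,\Sing(K))\ge t/C_0$, near $p_0$ the set $K$ is a single $\mathcal{C}^5$ smooth piece; using Definition~\ref{def_piecewisesmooth} together with the compactness of $K$, and composing with a complex-linear automorphism of $\C^n$ carrying $T_{p_0}K$ onto $\R^n$ (possible because $K$ is maximally totally real), I would pass to holomorphic coordinates $W_{p_0}$ (with $p_0\mapsto 0$) in which $K\cap W_{p_0}=\{u+iH(u):u\in\R^n,\ |u|<c\}$ for a $\mathcal{C}^5$ map $H$ with $H(0)=0$, $DH(0)=0$ and $\|H\|_{\mathcal{C}^5}\le C_1$, where $c,C_1$ depend only on $K$ and $C_0$. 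Dilating by $t$, i.e.\ replacing $H$ by $H_t(u):=t^{-1}H(tu)$, the manifold $K_t:=\{u+iH_t(u)\}$ is defined on a ball of radius $\gtrsim 1/C_0$, and since $H$ vanishes to second order at $0$ one gets $\|H_t\|_{\mathcal{C}^4(\B_n)}\le C_2 t$ with $C_2$ uniform. Thus it suffices to produce, for each small $t$, a \emph{unit-scale} $\mathcal{C}^2$ map $F_1\colon\overline\D\times\B_{n-1}\to\C^n$, holomorphic in the first variable, with $\|DF_1\|$ bounded, $F_1(e^{i\theta},y)\in K_t$ for $|\theta|\le\theta_0$, $|F_1(1,y)|\le 1/C_0$ and image in $\B(0,1/C_0)$, such that $G_1:=F_1|_{I\times\B_{n-1}}$, where $I:=[e^{-i\theta_0},e^{i\theta_0}]$, is a bijection onto its image with $C^{-1}\le|\det DG_1|\le C$; then $F:=tF_1$, transported back through $W_{p_0}$, yields (i)--(iii) term by term from the factor $t$.

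To build $F_1$ I would start from an explicit flat model and perturb. For the flat limit $K_0=\R^n$, fix once and for all a conformal map $\psi$ of $\D$ onto a bounded \emph{smooth} Jordan domain whose boundary contains a segment of the positive real axis; since that boundary is smooth, $\psi$ extends smoothly to $\overline\D$ with $\psi'\ne 0$ on $\overline\D$, $\psi$ is real on $I$, and $\psi(I)$ is an interval in $(0,\infty)$. After multiplying $\psi$ by a sufficiently small constant, set $F_1^{(0)}(z,y):=\big(\psi(z),\,y_1\psi(z),\ldots,y_{n-1}\psi(z)\big)$: this is holomorphic in $z$ with bounded derivative, maps $\overline\D\times\B_{n-1}$ into $\B(0,1/C_0)$, sends $I\times\B_{n-1}$ into $\R^n=K_0$, and $G_1^{(0)}(\theta,y)=\psi(e^{i\theta})(1,y)$ is a diffeomorphism onto a truncated cone with $|\det DG_1^{(0)}(\theta,y)|=|\psi'(e^{i\theta})|\,|\psi(e^{i\theta})|^{\,n-1}$ bounded above and below on $I$. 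To deform this into a family attached to $K_t$ I would add a holomorphic correction $\Phi$ solving the Bishop-type equation $\Im\big(F_1^{(0)}+\Phi\big)\big|_I=H_t\big(\Re(F_1^{(0)}+\Phi)|_I\big)$, the unknown being the boundary trace $\Re\Phi|_I$ (the boundary values of $\Phi$ on $\partial\D\setminus I$ fixed to a reference value, and $\Im\Phi$ the associated harmonic conjugate). Since $\|H_t\|_{\mathcal{C}^1}=O(t)$ is small, this equation is a contraction and has a unique small solution with $\|\Phi\|_{\mathcal{C}^2(\overline\D\times\B_{n-1})}\le C\|H_t\|_{\mathcal{C}^2}=O(t)$ — exactly the estimate underlying Propositions~\ref{pro_familydiscK} and \ref{pro-hopcuaK}. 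Setting $F_1:=F_1^{(0)}+\Phi$, all the required properties persist: $\|DF_1\|$ stays bounded, the image stays in $\B(0,1/C_0)$ after a harmless shrinking of constants, $G_1$ is $\mathcal{C}^1$-close to $G_1^{(0)}$ hence still injective with $|\det DG_1|$ pinched between two positive constants, and $G_1$ has image in $K_t$ by construction. Undoing the dilation gives $F=tF_1$ with $\|DF\|\le Ct$, $F(e^{i\theta},y)\in K$ on $I$, $|F(1,y)|\le t/C_0$, and $G=tG_1$ a bijection onto its image in $\B(p_0,t/C_0)\cap K$ with $C^{-1}t^n\le|\det DG|\le C t^n$, i.e.\ (i)--(iii).

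The step I expect to be the main obstacle — and which I would import wholesale from \cite{Vu_MA,Vu_feketepoint} rather than redo — is the $\mathcal{C}^2(\overline\D)$ regularity of the perturbed discs \emph{together with} the uniformity of every constant in $t$, in the base point $p_0$, and in $y\in\B_{n-1}$. The Bishop equation on an arc carries the familiar endpoint subtleties at $e^{\pm i\theta_0}$ (a Lipschitz boundary datum need not have a Lipschitz harmonic extension, and similarly for the conjugation operator), which are handled as in the proof of \cite[Proposition~2.5]{Vu_feketepoint} by first solving on a slightly larger arc and then observing that the relevant constants depend only on the $\mathcal{C}^3$-norms of the diffeomorphisms defining the local charts; those norms are uniformly bounded precisely because of the normalisation carried out in the first step and the hypothesis $\dist(p_0,\Sing(K))\ge t/C_0$. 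Granting this input, the scaling bookkeeping above completes the proof.
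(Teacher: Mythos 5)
Your overall strategy is the same as the paper's: reduce to a single smooth chart near $p_0$ (Lemma~\ref{le_localcoordinates}), set up an explicit family of discs attached to a flat model and correct it to a family attached to $K$ via a Bishop-type equation, and read off the Jacobian bounds at the scale $t$. The cosmetic differences are fine: you dilate to unit scale ($H_t(u)=t^{-1}H(tu)$, so $\|H_t\|_{\mathcal C^4}\lesssim t$ because $H$ vanishes to second order) whereas the paper keeps the factor $t$ inside the equation (\ref{Bishoptype}) and reads the same estimates from \cite[Proposition~3.3, Lemma~3.4]{Vu_MA}; and your flat model built from a conformal map $\psi$ onto a Jordan domain with a boundary segment on $(0,\infty)$ plays the role of the paper's auxiliary function $u_0$ with $u_0|_{[e^{-i\pi/2},e^{i\pi/2}]}=0$ and $\partial_x u_0(1)=-1$. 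Your computation $|\det DG_1^{(0)}|=|\partial_\theta\psi|\,|\psi|^{n-1}$, bounded above and below on $I\times\B_{n-1}$ and becoming $\sim t^n$ after rescaling, is the analogue of the paper's Proposition~\ref{pro_corverKtau1fixed}.

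There is, however, a genuine problem with the way you pose the Bishop equation, and as written the contraction claim does not hold. You take the unknown to be $\Re\Phi|_I$, fix the boundary values of $\Phi$ on $\partial\D\setminus I$, and assert the equation $\Im\Phi|_I=H_t\big(\Re F_1^{(0)}|_I+\Re\Phi|_I\big)$ is a contraction because $\|H_t\|_{\mathcal C^1}=O(t)$. But the map $\Re\Phi|_I\mapsto\Im\Phi|_I$ is a finite Hilbert transform, an operator of norm $\sim 1$, and it sits on the \emph{unknown} side of the equation; smallness of $H_t$ on the other side does not make the map $\Re\Phi\mapsto\Re\Phi$ close to the iteration you would run. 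Solving $\mathcal{T}(\Re\Phi)|_I=\text{(small)}$ for $\Re\Phi|_I$ with data fixed off $I$ is a Keldysh--Sedov mixed boundary value problem with the usual index/cokernel subtleties, not a fixed-point problem for a small perturbation; and if you really fix the full complex value of $\Phi$ on $\partial\D\setminus I$ as well as $\Re\Phi$ on $I$, the holomorphic extension is over-determined. The correct setup, and the one the paper uses in (\ref{Bishoptype}), prescribes the \emph{imaginary} part of the boundary data off the arc (there via $t\,u_0\,\boldsym{\tau}_1^*$), and solves for the \emph{real} part $U$ on the whole circle,
\begin{equation*}
U=c-\mathcal{T}_1\big(h(U)\big)-t\,\mathcal{T}_1 u_0\cdot\boldsym{\tau}_1^*,
\end{equation*}
so that the small factor $\|Dh\|=O(t)$ lands inside the bounded operator $\mathcal{T}_1$ and the contraction is immediate, uniformly in $t$, $\boldsym{\tau}$ and $p_0$ (\cite[Proposition~3.3]{Vu_MA}). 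In your notation this means: prescribe $\Im\Phi=0$ on $\partial\D\setminus I$, set $\Im\Phi=H_t(\Re F_1^{(0)}+\Re\Phi)$ on $I$, and solve $\Re\Phi=c-\mathcal{T}_1(\Im\Phi)$ for $\Re\Phi$ on all of $\partial\D$. With this correction, the rest of your argument — $\mathcal C^1$-closeness of $G_1$ to $G_1^{(0)}$ forcing injectivity and pinching $|\det DG_1|$, image contained in $\B(p_0,t/C_0)\cap K$, and the $t$-scaling of $DF$ and $\det DG$ — goes through as you describe.
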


 We proceed with the proof of the last theorem.
Denote by $z=x+ i y$ the complex variable on $\C$ and by $\xi=e^{i\theta}$ the variable on $\partial \D.$ For any $m \in \N$ and $r >0,$ let $\B_m(0,r)$ be the Euclidean ball centered at $0$ of radius $r$ of $\R^m$, and for $r=1$, we write $\B_m$ for $\B_m(0,1)$.  
Let $Z$ be  a compact submanifold with or without boundary of $\R^m.$ The Euclidean metric on $\R^m$ induces a metric on $Z.$  For $\beta \in (0,1]$ and $k \in \N$, let $\mathcal{C}^{k,\beta}(Z)$ be the space of real-valued functions on $Z$ which are differentiable up to the order $k$ and whose $k^{th}$ derivatives are H\"older continuous of order $\beta.$   For any tuple $v=(v_0,\cdots,v_m)$ consisting of functions in  $\mathcal{C}^{k,\beta}(Z)$, we define its $\mathcal{C}^{k,\beta}$-norm to be the maximum of the ones of  its components.

Let $u_0$ be a continuous function on $\partial \D$. 
Let 
$$\mathcal{C}u_0(z):= \frac{1}{2\pi} \int_{-\pi}^{\pi} u_0(e^{i\theta}) \frac{e^{i\theta}+ z}{e^{i\theta}-z} d\theta$$
which is a holomorphic function on $\D$. Recall that the real part of $\mathcal{C} u_0$ is $u_0$. Let $\mathcal{T}u_0(z)$ denotes the imaginary part of $\mathcal{C}u_0(z)$.    Put
 $$\mathcal{T}_1 u_0:= \mathcal{T}u_0 - \mathcal{T}u_0(1).$$
For basic properties of $\mathcal{T}_1$, one can consult \cite{MerkerPorten2} or \cite{Baouendi_Ebenfelt_Rothschild}.

We now go back to our current situation with $X$. We endow $X$ with an arbitrary Riemannian metric. For $\theta_0 \in (0,\pi)$, let $[e^{-i \theta_0}, e^{i \theta_0}]$ denotes the arc of $\partial \D$ of arguments from $-\theta_0$ to $\theta_0$.
Let $p_0$ be a regular point in $K$ and let $r_{p_0}$ denotes the distance of $p_0$ to the singular part of $K$. Recall that we assume in this section that $\dim K=n$.  

\begin{lemma} \label{le_localcoordinates} There exist   a constant $c_K>1$ depending only on $(K,X)$ and a local chart $(W_{p_0}, \Psi)$ around $p_0,$ where $\Psi: W_ {p_0} \rightarrow \B_{2n}$ is biholomorphic with $\Psi(p_0)=0$  such that the two following conditions hold:  

$(i)$ we have 
\begin{align*} 
\|\Psi\|_{\mathcal{C}^{5}} \le c_K, \quad \|\Psi^{-1}\|_{\mathcal{C}^{5}} \le c_K,
\end{align*}

$(ii)$ there is a $\mathcal{C}^{3}$ map $h$ from $\overline{\B}_n$ to $\R^n$ so that $h(0)=Dh(0)=0$, 
and
 $$\Psi(K \cap W_{p_0}) \supset  \big\{(\mathbf{x}, h(\mathbf{x})): \mathbf{x} \in \overline{\B}_n(0, r_{p_0}/c_K)\big\},$$
 where the canonical coordinates on $\C^n= \R^n + i \R^n$ are denoted by $\mathbf{z}= \mathbf{x}+ i \mathbf{y},$ and 
\begin{align} \label{ine_C1alphanorm}
\|h\|_{\mathcal{C}^3} \le c_K.
\end{align}
\end{lemma}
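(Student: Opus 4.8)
The plan is to bring $K$ into its standard local normal form near the regular point $p_0$, namely a graph $\mathbf y=h(\mathbf x)$ over $\R^n\subset\C^n$ tangent to $\R^n$ at the origin, while keeping all constants uniform in $p_0$. First I would fix, once and for all, a finite atlas of \emph{holomorphic} charts $(U_\alpha,\varphi_\alpha)$ of $X$ with $\varphi_\alpha(U_\alpha)=\B_{2n}$ and such that the shrunk sets $\varphi_\alpha^{-1}(\tfrac14\B_{2n})$ still cover $X$; on the relevant compact subsets all transition maps and all $\varphi_\alpha,\varphi_\alpha^{-1}$ then have $\mathcal C^5$-norms bounded by a constant depending only on $X$. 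By Definition \ref{def_piecewisesmooth} and compactness of $K$, in each chart $\varphi_\alpha$ the set $K$ is, near every regular point, a $\mathcal C^5$ submanifold whose local parametrisations have $\mathcal C^5$-norms bounded by a constant depending only on $(K,X)$. Since $\dim K=n$, the plane $E:=D\varphi_\alpha(p_0)\big(T_{p_0}K\big)\subset\C^n$ is a totally real $n$-plane: for an $n$-plane $E$ in $\C^n$ the subspace $E+JE$ is automatically $J$-invariant, so $E$ lies in a complex hyperplane exactly when $E+JE\ne\C^n$, i.e.\ exactly when $E$ fails to be totally real, and the latter is excluded because $K$ is Cauchy--Riemann generic.

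Next I would straighten $E$ to $\R^n$ by a complex-affine map of controlled norm. Picking $\alpha$ with $q_0:=\varphi_\alpha(p_0)\in\tfrac14\B_{2n}$, I would take a real basis $e_1,\dots,e_n$ of $E$ orthonormal for the Euclidean metric of that chart and set $L:=[e_1|\cdots|e_n]^{-1}\in\GL_n(\C)$ and $A(\mathbf z):=L(\mathbf z-q_0)$, so that $A(q_0)=0$ and $A(E)=\R^n$. Then $\|L^{-1}\|\le\sqrt n$, while $\|L\|$ is controlled as soon as $|\det[e_1|\cdots|e_n]|$ is bounded below, and this determinant is exactly the quantity measuring how totally real $E$ is. Proving that this lower bound is \emph{uniform in $p_0$} is the one place where the hypotheses on $K$ are used essentially, and I expect it to be the main point: the set of all subsequential limits of tangent planes $T_pK$ over regular points $p$ is a compact subset of the Grassmann bundle over $K$ (regular points being dense in $K$), and by Cauchy--Riemann genericity each of its members is totally real; hence the continuous, everywhere-positive function $E\mapsto|\det[e_1|\cdots|e_n]|$ is bounded below on it, which gives $|\det[e_1|\cdots|e_n]|\ge c_K^{-1}$ with $c_K$ depending only on $(K,X)$.

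Finally I would assemble $\Psi$ and $h$. Since $\|L^{\pm1}\|$ are bounded, $A(\B_{2n})$ contains a ball $\B_{2n}(0,\delta_0)$ with $\delta_0$ a positive constant, and (using $|q_0|<\tfrac14$) $A^{-1}\big(\B_{2n}(0,\delta_0)\big)\subset\tfrac12\B_{2n}$; so setting $\Psi:=\delta_0^{-1}A\circ\varphi_\alpha$ and $W_{p_0}:=\Psi^{-1}(\B_{2n})$ yields a biholomorphism $\Psi\colon W_{p_0}\to\B_{2n}$ with $\Psi(p_0)=0$, whose image under $\varphi_\alpha$ lies in $\tfrac12\B_{2n}$, whence $\|\Psi\|_{\mathcal C^5},\|\Psi^{-1}\|_{\mathcal C^5}\le c_K$ by the bounds already collected. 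In the coordinates $\Psi$, near $0$ the set $K$ is a $\mathcal C^5$ submanifold with tangent space $\R^n$ at $0$, so by the implicit function theorem it is there a graph $\mathbf y=h_0(\mathbf x)$ with $h_0(0)=0$, $Dh_0(0)=0$, and $\|h_0\|_{\mathcal C^5}$ bounded in terms of the collected constants. This graph describes an honest piece of $K$ on a ball of $\mathbf x$-radius $\gtrsim$ the distance of $p_0$ to $\Sing K$, since inside that distance $K$ meets only $\Reg K$; enlarging $c_K$ (and using $r_{p_0}\le\mathrm{diam}\,X$ to arrange $r_{p_0}/c_K\le1$) I may take this radius to be at least $r_{p_0}/c_K$. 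It then remains to extend $h_0$ from $\overline{\B}_n(0,r_{p_0}/c_K)$ to $\overline{\B}_n$: I would keep the order-$\le 2$ Taylor polynomial of $h_0$ at $0$ (which, by $h_0(0)=Dh_0(0)=0$, vanishes together with its differential) and glue on the higher-order remainder cut off at scale $r_{p_0}/c_K$; this is harmless because the decay of that remainder — controlled by $\|h_0\|_{\mathcal C^3}$ — beats the growth of the cutoff derivatives. The result is $h\in\mathcal C^3(\overline{\B}_n)$ (in fact $\mathcal C^5$) with $h(0)=Dh(0)=0$, $\|h\|_{\mathcal C^3}\le c_K$, and $\Psi(K\cap W_{p_0})\supset\{(\mathbf x,h(\mathbf x)):\mathbf x\in\overline{\B}_n(0,r_{p_0}/c_K)\}$; taking $c_K$ to be the largest of the finitely many constants produced, and $c_K>1$, finishes the proof.
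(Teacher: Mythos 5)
Your argument is correct and ultimately rests on the same two pillars as the paper's: a standard normal-form construction for a maximally totally real manifold at a regular point, and a compactness argument to make the constants uniform in $p_0$. The paper's proof is very terse --- it cites \cite[Lemma 4.1]{Vu_feketepoint} (or \cite{BER}) for the normal form and explains the uniformity by decomposing $K$ near each singular point $a$ as $\bigcup_j A_j$ with $A_j\Subset B_j$, where each $B_j$ is a \emph{compact smooth} CR-generic submanifold without boundary containing the corresponding polyhedral piece; applying the normal-form lemma to the $B_j$ at once gives constants that are uniform and gives the graph over a ball of radius $\gtrsim r_{p_0}$ because $\B(p_0,r_{p_0})\cap\partial A_j=\emptyset$.

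You instead unpack the normal form explicitly (atlas, straightening linear map $L$, implicit function theorem) and package the uniformity differently: a lower bound for $|\det[e_1|\cdots|e_n]|$ on the \emph{closure in the Grassmann bundle} of $\{(p,T_pK):p\in\Reg K\}$ (CR-genericity making that closure consist of totally real planes, compactness giving the bound), plus the observation that the $\mathcal{C}^5$-norms of the local parametrisations of $\Reg K$ are uniformly bounded because near a singular point $K$ is, in the $\Psi_p$-chart of Definition~\ref{def_piecewisesmooth}, a finite union of polyhedra. The Grassmannian compactness step is a clean and slightly more intrinsic way to extract the same information the paper gets from the $B_j$-decomposition, and your explicit cut-off extension of $h_0$ to $\overline{\B}_n$ is a detail the paper leaves unsaid. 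One small point worth making explicit: in the extension step the cutoff must equal $1$ on the whole ball $\overline{\B}_n(0,r_{p_0}/c_K)$, so you need the graph $h_0$ to be defined on a slightly larger ball, say of radius $2r_{p_0}/c_K$; this costs nothing beyond enlarging $c_K$, but the sentence as written glosses over it. Likewise, your assertion that the graph persists to $\mathbf{x}$-radius $\gtrsim r_{p_0}$ implicitly uses both (i) the uniform bound on the second derivatives of the parametrisation (bounded second fundamental form) and (ii) the fact that $\Reg K$ cannot ``end'' inside $\B(p_0,r_{p_0})$ since $K$ is closed and $\B(p_0,r_{p_0})\cap\Sing K=\emptyset$; these are exactly the points the paper's $A_j\Subset B_j$ decomposition is designed to deliver, and it would be worth saying them out loud rather than compressing them into one clause.
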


Note that $h$ is indeed $\mathcal{C}^5$ (because $K$ is so), but $\mathcal{C}^3$ is sufficient for our purpose in what follows. 

\proof The existence of local coordinates so that $h(0)=Dh(0)=0$ is standard, 
see \cite{BER} or \cite[Lemma 4.1]{Vu_feketepoint}. 
Perhaps one needs to explain a bit about the radius $r_{p_0}/c_K$: 
the existence of $c_K$ comes from the fact that for every singular point 
$a$ in $K$, there are an open  neighorhood $U$ of $a$ in $X$ 
and sets $A_j \Subset B_j \subset U$ for $1 \le j \le m$ 
such that $B_j$ is $\mathcal{C}^5$ smooth generic CR submanifold 
of dimension $\dim K$ in $U$, and  
$$K\cap U=\cup_{j=1}^m (A_j\cap U),$$
 and $A_j$ is the closure in $U$ of a relatively compact 
 open subset in $B_j$, and $\partial A_j$ (in $U$) 
 is contained in the singularity of $K$. 
 Thus by applying the standard local coordinates to points 
 in $B_j$ we obtain the existence of $c_K$. This finishes the proof.  
\endproof

From now on, we only use the local coordinates introduced in Lemma \ref{le_localcoordinates} and identify points in $W_{p_0}$ with those in $\B_{2n}$ via $\Psi.$ 

\begin{lemma}[{\cite[Lemma 3.1]{Vu_feketepoint}}]
\label{le_version8existenceu_MA}
There exist a function 
$u_0 \in \mathcal{C}^{\infty}(\partial \D)$ such that 
$u_0(e^{i\theta}) = 0$ for  $\theta \in [-\pi/2, \pi/2]$ and
$\partial_x u_0(1)=-1$.
 \end{lemma}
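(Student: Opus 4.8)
The plan is to obtain $u_0$ as the boundary trace of the real part of an explicit holomorphic function on $\D$, where $\partial_x u_0(1)$ is understood in the only sensible way in this context, namely as $\partial_x\Re(\mathcal{C}u_0)(1)=\Re\big((\mathcal{C}u_0)'(1)\big)$ (using that $\mathcal{C}u_0$ is holomorphic). First I would fix any $\rho\in\mathcal{C}^\infty(\partial\D)$ with $\rho\ge 0$, $\rho\not\equiv 0$, and $\supp\rho$ contained in the open left semicircular arc $\{e^{i\theta}:\pi/2<\theta<3\pi/2\}$; then $\rho$ vanishes identically near the closed arc $\{e^{i\theta}:-\pi/2\le\theta\le\pi/2\}$. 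Set $F:=\mathcal{C}\rho$. Expanding the Schwarz kernel as a geometric series gives $F(z)=\widehat\rho(0)+2\sum_{n\ge 1}\widehat\rho(n)z^n$, and since $\rho$ is smooth its Fourier coefficients decay faster than any power of $n$, so this series and all its term-by-term derivatives converge uniformly on $\overline\D$; hence $F\in\mathcal{C}^\infty(\overline\D)$ and $\Re F|_{\partial\D}=\rho$ vanishes on $\{\theta\in[-\pi/2,\pi/2]\}$, as required.

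It then remains only to normalise the derivative at $1$. Differentiating under the integral sign — legitimate because $\supp\rho$ is bounded away from $\xi=1$, so the kernel is bounded there — and using the identity $(e^{i\theta}-1)^2=-4\sin^2(\theta/2)\,e^{i\theta}$, one gets the simplification
\[
F'(1)=\frac{1}{\pi}\int_{-\pi}^{\pi}\rho(e^{i\theta})\,\frac{e^{i\theta}}{(e^{i\theta}-1)^2}\,d\theta
=-\frac{1}{4\pi}\int_{-\pi}^{\pi}\frac{\rho(e^{i\theta})}{\sin^2(\theta/2)}\,d\theta,
\]
which is a finite, strictly negative real number because $\rho\ge 0$, $\rho\not\equiv 0$, and $\rho$ vanishes near $\theta=0$. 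Put $c:=-F'(1)>0$ and $u_0:=\rho/c$. Then $u_0\in\mathcal{C}^\infty(\partial\D)$, $u_0(e^{i\theta})=0$ for $\theta\in[-\pi/2,\pi/2]$, and $\partial_x u_0(1)=\Re\big((\mathcal{C}u_0)'(1)\big)=F'(1)/c=-1$.

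I do not expect a real obstacle: the statement is soft. The only points needing a word of justification are the smoothness of the Cauchy transform of a smooth density up to $\overline\D$ (classical, and transparent from the rapidly convergent power series above) and the sign of $F'(1)$. If one wishes to bypass the explicit kernel computation, the sign follows from Hopf's lemma: $\Re F$ is the Poisson integral of the nonnegative, non-trivial function $\rho$, so $\Re F>0$ in $\D$ while $\Re F(1)=\rho(1)=0$; as the outward normal to $\partial\D$ at $1$ is the $+x$-direction, Hopf's lemma gives $\partial_x\Re F(1)<0$, and the linearity and positive homogeneity of $\rho\mapsto F'(1)$ allow the final rescaling to $-1$.
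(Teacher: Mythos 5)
Your proof is correct. The paper does not give its own proof of this lemma (it is cited from an external reference), so there is nothing in-text to compare against; what you wrote is the natural argument and it closes every gap. In particular, your reading of $\partial_x u_0(1)$ as $\partial_x\Re(\mathcal{C}u_0)(1)=\Re\big((\mathcal{C}u_0)'(1)\big)$ is exactly the interpretation consistent with how the paper later uses the lemma, via the Cauchy--Riemann identity $\partial_y\mathcal{T}u_0=\partial_x\Re(\mathcal{C}u_0)$ in the proof of Proposition~\ref{pro_corverKtau1fixed}. The power-series expansion $\mathcal{C}\rho(z)=\widehat\rho(0)+2\sum_{n\ge1}\widehat\rho(n)z^n$ with rapidly decaying coefficients does give $\mathcal{C}\rho\in\mathcal{C}^\infty(\overline{\D})$ as claimed, and both of your justifications for the strict sign of $F'(1)$ are valid: the explicit Schwarz-kernel identity $F'(1)=-\tfrac{1}{4\pi}\int\rho(e^{i\theta})\sin^{-2}(\theta/2)\,d\theta$, which is finite because $\operatorname{supp}\rho$ stays away from $\theta=0$, and alternatively Hopf's lemma applied to the positive harmonic function $\Re F$ vanishing at the boundary point $1$ (legitimate since $\Re F$ is real-analytic across the boundary arc where $\rho\equiv0$). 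The final rescaling $u_0:=\rho/(-F'(1))$ then delivers $u_0\in\mathcal{C}^\infty(\partial\D)$, vanishing on the closed right semicircle, with $\partial_x u_0(1)=-1$ as required.
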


In what follows, we identify $\C^n$ with $\R^n+ i \R^n.$ Let  $u_0$ be a function described in Lemma \ref{le_version8existenceu_MA}.  Let $\boldsym{\tau}_1, \boldsym{\tau}_2 \in \overline{\B}_{n-1} \subset \R^{n-1}.$ Define $\boldsym{\tau}^*_1:= (1, \boldsym{\tau}_1) \in \R^n$ and $\boldsym{\tau}^*_2:= (0, \boldsym{\tau}_2) \in \R^n$ and $\boldsym{\tau}:=(\boldsym{\tau}_1,\boldsym{\tau}_2).$ Let $t$ be a positive number in $(0,1]$ which plays a role as a scaling parameter in the equation (\ref{Bishoptype}) below. 

In order to construct an analytic disc partly attached to $K$, it suffices to find a  map 
$$U: \partial \D \rightarrow \B_n \subset \R^n,$$
which is H\"older continuous, satisfying the following Bishop-type equation 
\begin{align}\label{Bishoptype}
U_{\boldsym{\tau},t}(\xi)= t\boldsym{\tau}^*_2 - \mathcal{T}_1\big(h(U_{\boldsym{\tau},t}) \big)(\xi) - t\mathcal{T}_1 u_0(\xi) \, \boldsym{\tau}^*_1,
\end{align}      
where the Hilbert transform $\mathcal{T}_1$ is extended to a vector valued function by acting on each component.  The existence of solution of the last equation is a standard fact in the Cauchy-Riemann geometry. 

\begin{proposition}[{\cite[Proposition 3.3]{Vu_MA}}]
\label{pro_BishopequationMA}
There  are a positive number $t_1 \in (0,1)$ and a real number 
$c_1>0$  satisfying the following property: for any $t\in(0,t_1]$ 
and any $\boldsym{\tau} \in \overline{\B}_{n-1}^2,$ 
the equation (\ref{Bishoptype}) has a unique solution 
$U_{\boldsym{\tau},t}$ which is  
$\mathcal{C}^{2, \frac{1}{2}}$ in $(\xi, \boldsym{\tau})$ 
and such that
\begin{align}\label{ine_danhgiachuancuaU}
\|D^j_{(\xi,\boldsym{\tau})} 
U_{\boldsym{\tau},t}\|_{\mathcal{C}^{\frac{1}{2}}(\partial \D)} 
\le c_1  t,
\end{align} 
for any $\boldsym{\tau} \in \overline{\B}_{n-1}^2$ and $j=0,1$.
\end{proposition}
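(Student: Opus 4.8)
The plan is to solve the Bishop-type equation \eqref{Bishoptype} by a contraction mapping argument in a H\"older space on $\partial\D$, using in an essential way that $h(0)=Dh(0)=0$: this forces the Lipschitz constant of $h$ on a ball of radius $O(t)$ to be itself $O(t)$, which is exactly what makes the solution satisfy $\|U_{\boldsym{\tau},t}\|\lesssim t$ rather than a mere $O(1)$ bound, and hence what yields the sharp estimate \eqref{ine_danhgiachuancuaU}.

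First I would fix the Banach space $\mathcal{B}:=\mathcal{C}^{2,\frac12}(\partial\D,\R^n)$ and, for $t\in(0,1]$ and $\boldsym{\tau}\in\overline{\B}_{n-1}^2$, consider the map
\begin{equation*}
\Phi_{\boldsym{\tau},t}(U):=t\,\boldsym{\tau}^*_2-\mathcal{T}_1\big(h(U)\big)-t\,\mathcal{T}_1u_0\cdot\boldsym{\tau}^*_1,
\end{equation*}
so that \eqref{Bishoptype} reads $U=\Phi_{\boldsym{\tau},t}(U)$. I will use the standard facts (see \cite{MerkerPorten2,Baouendi_Ebenfelt_Rothschild}) that $\mathcal{T}_1$ is bounded on each $\mathcal{C}^{k,\frac12}(\partial\D)$, that it commutes with $\partial_\theta$ up to an additive constant, and that post-composition with the fixed $\mathcal{C}^3$ map $h$ sends $\mathcal{C}^{2,\frac12}$ into itself with norm controlled in terms of $c_K$ and $\|U\|_{\mathcal{C}^{2,\frac12}}$. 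Since $Dh(0)=0$ and $\|h\|_{\mathcal{C}^3}\le c_K$, on any ball $\{\|U\|_{\mathcal{C}^{1,\frac12}}\le\rho\}$ the map $U\mapsto h(U)$ has $\mathcal{C}^{1,\frac12}$-Lipschitz constant $O(c_K\rho)$. Combining this with boundedness of $\mathcal{T}_1$ and with $\|u_0\|_{\mathcal{C}^\infty}\le\mathrm{const}$, one checks that there are a constant $c_1>0$ and $t_1\in(0,1)$ so that for all $t\le t_1$ and all $\boldsym{\tau}$ the ball $\{\|U\|_{\mathcal{C}^{1,\frac12}}\le c_1t\}$ is mapped into itself by $\Phi_{\boldsym{\tau},t}$, which is moreover a $\tfrac12$-contraction there (the linear terms contribute $O(t)$, and $\|\mathcal{T}_1(h(U_1))-\mathcal{T}_1(h(U_2))\|\le O(c_Kc_1t)\,\|U_1-U_2\|$). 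The Banach fixed point theorem then produces the unique solution $U_{\boldsym{\tau},t}$ with $\|U_{\boldsym{\tau},t}\|_{\mathcal{C}^{\frac12}(\partial\D)}\le c_1t$, i.e.\ the case $j=0$ of \eqref{ine_danhgiachuancuaU}.

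Next I would bootstrap for regularity and for the $j=1$ bound. Applying $\partial_\theta$ to \eqref{Bishoptype} and using that $\mathcal{T}_1$ commutes with $\partial_\theta$ yields a linear equation of the form $\big(\mathrm{Id}+\mathcal{T}_1\circ(Dh(U_{\boldsym{\tau},t})\,\cdot\,)\big)\partial_\theta U_{\boldsym{\tau},t}=-t\,\mathcal{T}_1(\partial_\theta u_0)\,\boldsym{\tau}^*_1+\mathrm{const}$; since $\|Dh(U_{\boldsym{\tau},t})\|_{\mathcal{C}^{\frac12}(\partial\D)}\le c_Kc_1t$, for $t\le t_1$ the operator on the left is invertible on $\mathcal{C}^{\frac12}(\partial\D)$ with uniformly bounded inverse, whence $\|\partial_\theta U_{\boldsym{\tau},t}\|_{\mathcal{C}^{\frac12}}\le Ct$, and one more iteration gives $U_{\boldsym{\tau},t}\in\mathcal{C}^{2,\frac12}(\partial\D)$. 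Differentiating \eqref{Bishoptype} in $\boldsym{\tau}$ produces the analogous equation $\big(\mathrm{Id}+\mathcal{T}_1\circ(Dh(U_{\boldsym{\tau},t})\,\cdot\,)\big)D_{\boldsym{\tau}}U_{\boldsym{\tau},t}=t\,\partial_{\boldsym{\tau}}\boldsym{\tau}^*_2-t\,\mathcal{T}_1u_0\,\partial_{\boldsym{\tau}}\boldsym{\tau}^*_1$, whose right-hand side has $\mathcal{C}^{\frac12}$-norm $\le Ct$; invertibility of the same operator gives $\|D_{\boldsym{\tau}}U_{\boldsym{\tau},t}\|_{\mathcal{C}^{\frac12}}\le Ct$, and the implicit function theorem applied to $(U,\boldsym{\tau})\mapsto U-\Phi_{\boldsym{\tau},t}(U)$ (whose partial differential in $U$ is precisely that invertible operator) shows $(\xi,\boldsym{\tau})\mapsto U_{\boldsym{\tau},t}(\xi)$ is $\mathcal{C}^{2,\frac12}$. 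Enlarging $c_1$ to absorb the constants $C$ and shrinking $t_1$ accordingly yields \eqref{ine_danhgiachuancuaU} for $j=0,1$.

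The one point needing genuine care — rather than being routine — is the bookkeeping of constants: every composition and Hilbert-transform estimate must be carried out in the $\mathcal{C}^{k,\frac12}$ scale with constants depending only on $c_K$ (hence only on $(K,X)$, and not on $p_0$, $t$ or $\boldsym{\tau}$), and one must keep track of the fact that the small Lipschitz constant produced by $Dh(0)=0$ is exactly what makes the fixed-point ball have radius proportional to $t$, which is what propagates to the sharp bound \eqref{ine_danhgiachuancuaU}. Apart from this, the argument is the standard solvability theory for Bishop's equation, carried out as in \cite[Proposition 3.3]{Vu_MA} and \cite[Lemma 3.1]{Vu_feketepoint}.
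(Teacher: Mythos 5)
The paper does not contain its own proof of this proposition; it is imported verbatim as a citation to \cite[Proposition 3.3]{Vu_MA}, so there is no in-paper argument to compare against. That said, your contraction-mapping/bootstrap/implicit-function-theorem scheme is the standard route for solving Bishop-type equations (and is the route used in the cited reference, as well as in \cite{MerkerPorten2} and \cite{Baouendi_Ebenfelt_Rothschild}), and the argument as you outline it is essentially correct. You correctly identify the two load-bearing facts: (a) boundedness of $\mathcal{T}_1$ on the H\"older scales $\mathcal{C}^{k,\frac12}(\partial\D)$, and (b) the vanishing $h(0)=Dh(0)=0$, which shrinks the Lipschitz constant of $U\mapsto h(U)$ on a ball of radius $\rho$ down to $O(c_K\rho)$; the latter is exactly what lets the fixed-point ball have radius proportional to $t$, which is the content of estimate \eqref{ine_danhgiachuancuaU}. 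You also correctly flag that all constants must be tied only to $c_K$ from Lemma \ref{le_localcoordinates}, hence are uniform in $p_0$, $t$, $\boldsym{\tau}$.

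Two small points worth tightening. First, you open by fixing $\mathcal{B}=\mathcal{C}^{2,\frac12}$ but actually run the contraction in $\mathcal{C}^{1,\frac12}$; this is in fact the right thing to do (contract at one derivative below the target, then bootstrap), but the wording should reflect that. Second, your differentiated equation is not quite $\big(\mathrm{Id}+\mathcal{T}_1\circ(Dh(U_{\boldsym{\tau},t})\,\cdot\,)\big)\partial_\theta U=\text{(known)}+\text{const}$: since $\partial_\theta\mathcal{T}_1 v=\mathcal{T}_1(\partial_\theta v)+\mathcal{T}(\partial_\theta v)(1)$, the additive ``constant'' produced from the $h(U)$ term is $\mathcal{T}\big(Dh(U_{\boldsym{\tau},t})\partial_\theta U_{\boldsym{\tau},t}\big)(1)$, which depends on the unknown $\partial_\theta U_{\boldsym{\tau},t}$. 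It is, however, a rank-one perturbation with operator norm $O(\|Dh(U_{\boldsym{\tau},t})\|_{\mathcal{C}^{1/2}})=O(t)$, so it can be absorbed into the invertible operator without affecting the conclusion; just make that explicit rather than calling it a constant. With these cosmetic fixes the sketch goes through, and the joint $\mathcal{C}^{2,\frac12}$ regularity in $(\xi,\boldsym{\tau})$ then follows from the implicit function theorem applied to $(U,\boldsym{\tau})\mapsto U-\Phi_{\boldsym{\tau},t}(U)$ on $\mathcal{C}^{2,\frac12}(\partial\D)$, as you indicate.
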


From now on, we consider $t<\min \{t_1, C_0 r_{p_0}\}$ (hence the distance from $p_0$ to the singularity of $K$ is at least $t/C_0$).  Let $U_{\boldsym{\tau},t}$ be the unique solution of (\ref{Bishoptype}).
For simplicity,  we use the same notation $U_{\boldsym{\tau},t}(z)$ to denote the harmonic extension of $U_{\boldsym{\tau},t}(\xi)$ to $\D.$  Let $P_{\boldsym{\tau},t}(z)$ be  the harmonic extension of $h\big(U_{\boldsym{\tau},t}(\xi)\big)$ to $\D.$ Recall that 

\begin{lemma}\label{le_danhgiadaohamcuaP'tauMA} (\cite[Lemma 3.4]{Vu_MA}) There exists a constant $c_2$ so that for every $t \in (0, t_1]$ and every $(z,\boldsymbol{\tau}) \in \overline{\D} \times \overline{\B}_{n-1}^2,$ we have
\begin{align}\label{ine_danhgiachuancuaP'2}
\| D^j_{(z,\boldsym{\tau})} U_{\boldsymbol{\tau},t}(z)\| \le c_2 t \quad  \text{and} \quad  \| D^j_{(z,\boldsym{\tau})} P_{\boldsymbol{\tau},t}(z)\| \le c_2 t^2,
\end{align} 
for $j=0,1.$
\end{lemma}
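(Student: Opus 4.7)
The plan is to derive the pointwise bounds in the disc from the boundary bounds in Proposition \ref{pro_BishopequationMA} together with the Taylor vanishing $h(0)=Dh(0)=0$.

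First I would prove the estimates for $U_{\boldsym{\tau},t}$. By Proposition \ref{pro_BishopequationMA} the boundary values of $U_{\boldsym{\tau},t}$ and its $(\xi,\boldsym{\tau})$-derivatives are controlled in $\mathcal{C}^{1/2}(\partial\D)$ by $c_1 t$. The extension $U_{\boldsym{\tau},t}(z)$ on $\D$ is defined as the harmonic (Poisson) extension of these boundary values. Since the Poisson integral of a $\mathcal{C}^{k,\alpha}$ function on $\partial\D$, with $\alpha\in(0,1)$, is $\mathcal{C}^{k,\alpha}$ on $\overline\D$ (see, e.g., the classical result quoted on \cite[Page 41]{Gakhov}), the bound $\|D^j_{(\xi,\boldsym{\tau})}U_{\boldsym{\tau},t}\|_{\mathcal{C}^{1/2}(\partial\D)}\le c_1 t$ for $j=0,1$ upgrades to $\|D^j_{(z,\boldsym{\tau})}U_{\boldsym{\tau},t}\|_{L^\infty(\overline\D)}\le c\, t$ with $c$ depending only on $c_1$. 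This gives the first half of \eqref{ine_danhgiachuancuaP'2}.

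Next I would treat $P_{\boldsym{\tau},t}$. By construction it is the harmonic extension of $h\circ U_{\boldsym{\tau},t}$ on $\partial\D$, so by the maximum principle and the same regularity result for the Poisson extension it suffices to bound $\|D^j_{(\xi,\boldsym{\tau})}(h\circ U_{\boldsym{\tau},t})\|_{\mathcal{C}^{1/2}(\partial\D)}$ by $C t^2$ for $j=0,1$. This is where the vanishing conditions on $h$ become crucial. Since $h(0)=Dh(0)=0$ and $h\in\mathcal{C}^3$ with $\|h\|_{\mathcal{C}^3}\le c_K$ by \eqref{ine_C1alphanorm}, Taylor's theorem yields $|h(u)|\le c'|u|^2$ and $|Dh(u)|\le c'|u|$ for $u$ in a fixed neighborhood of $0$. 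Since $|U_{\boldsym{\tau},t}|\le c_1 t$ on $\partial\D$ from the first step, we get $|h\circ U_{\boldsym{\tau},t}|\le c'(c_1 t)^2 = O(t^2)$. The chain rule then gives
\begin{equation*}
D_{(\xi,\boldsym{\tau})}(h\circ U_{\boldsym{\tau},t}) = Dh(U_{\boldsym{\tau},t})\cdot D_{(\xi,\boldsym{\tau})}U_{\boldsym{\tau},t},
\end{equation*}
and each factor on the right is $O(t)$ by Taylor (for $Dh$) and by Proposition \ref{pro_BishopequationMA} (for $D U$), so the product is $O(t^2)$. A parallel argument using that $D^2 h$ is bounded and that $U_{\boldsym{\tau},t}$ is $\mathcal{C}^{2,1/2}$ in $(\xi,\boldsym{\tau})$ controls the $\mathcal{C}^{1/2}$ seminorm of the derivatives of $h\circ U_{\boldsym{\tau},t}$ by $O(t^2)$ as well.

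Combining these two boundary estimates for $h\circ U_{\boldsym{\tau},t}$ with the boundedness of the Poisson extension on $\mathcal{C}^{k,\alpha}$ spaces yields $\|D^j_{(z,\boldsym{\tau})}P_{\boldsym{\tau},t}\|_{L^\infty(\overline\D)}\le c_2 t^2$ for $j=0,1$ with $c_2$ depending only on $c_K, c_1$, which completes the proof. The only real subtlety is tracking how the $\mathcal{C}^{1/2}$-seminorm of $U_{\boldsym{\tau},t}$ interacts with the Lipschitz character of $Dh$ near $0$; this is handled by the standard composition estimate $[f\circ g]_{\mathcal{C}^\alpha}\le \mathrm{Lip}(f)\,[g]_{\mathcal{C}^\alpha}$ applied componentwise, and is the main point where one uses $h\in\mathcal{C}^3$ rather than merely $\mathcal{C}^1$.
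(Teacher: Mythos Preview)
The paper does not supply its own proof of this lemma; it simply cites \cite[Lemma 3.4]{Vu_MA} and adds the remark that the extra hypothesis $D^2h(0)=0$ assumed in that reference is superfluous. Your argument is correct and self-contained, and it confirms the paper's remark: you use only $h(0)=Dh(0)=0$ together with the boundedness of $D^2h$ (from $\|h\|_{\mathcal{C}^3}\le c_K$), never any second-order vanishing of $h$.

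One small point worth making explicit in your write-up: the $\boldsym{\tau}$-derivatives of $P_{\boldsym{\tau},t}$ commute with the Poisson extension and are handled directly by the maximum principle, but to bound $|D_z P_{\boldsym{\tau},t}|$ on $\overline\D$ you genuinely need the $\mathcal{C}^{1,1/2}(\partial\D)$ norm of $h\circ U_{\boldsym{\tau},t}$ to be $O(t^2)$, not merely the $L^\infty$ norm of its tangential derivative. Your ``parallel argument'' sketch covers this, since the product rule for H\"older seminorms gives
\[
[Dh(U)\cdot D_\xi U]_{\mathcal{C}^{1/2}}\le [Dh(U)]_{\mathcal{C}^{1/2}}\|D_\xi U\|_{L^\infty}+\|Dh(U)\|_{L^\infty}[D_\xi U]_{\mathcal{C}^{1/2}}=O(t)\cdot O(t)+O(t)\cdot O(t)=O(t^2),
\]
using that $Dh$ is Lipschitz. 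This step needs $h\in\mathcal{C}^2$; the full $\mathcal{C}^3$ regularity is not actually required here.
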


We note that the hypothesis that $D^2h(0)=0$ was required in \cite{Vu_MA}, but it is actually superfluous in the proof of Lemma \ref{le_danhgiadaohamcuaP'tauMA}.  Define
$$F(z, \boldsym{\tau},t) := U_{\boldsym{\tau},t}(z)+ i  P_{\boldsym{\tau},t}(z)+ i  t \, u_0(z) \, \boldsym{\tau}^*_1$$
which is a family of analytic discs parametrized by $(\boldsym{\tau},t).$ Compute 
$$F(1,\boldsym{\tau},t)= U(\boldsym{\tau},t)(1)+ i  P_{\boldsym{\tau},t}(1)= t\boldsym{\tau}^*_2+ h(t\boldsym{\tau}^*_2).$$
Hence if $|\boldsym{\tau}_2|$ is small enough, we see that 
\begin{align}\label{ine-Ftai1nho}
|F(1,\boldsym{\tau},t)| \le t |\boldsym{\tau}_2|< r_{p_0}/(2c_K).
\end{align}
This combined with Lemma \ref{le_danhgiadaohamcuaP'tauMA} yields that 
\begin{align}\label{ine-Ftai1nho2}
|U_{\boldsym{\tau},t}(e^{i\theta})| \le |\theta||U_{\boldsym{\tau},t}(1)| \le |\theta| r_{p_0}/(2c_K)<r_{p_0}/c_K
\end{align}
if $\theta$ and $\boldsym{\tau}_2$ are small enough. Now the defining formula of $F$ and the fact that $u_0 \equiv 0$ on $ [e^{-i \pi/2}, e^{i\pi/2}]$ imply that  
$$F(\xi, \boldsym{\tau},t)=U_{\boldsym{\tau},t}(\xi)+ i  P_{\boldsym{\tau},t}(\xi)=U_{\boldsym{\tau},t}(\xi)+ i h\big(U_{\boldsym{\tau},t}(\xi)\big) \in  K$$
by (\ref{ine-Ftai1nho2}) if $\theta$ and $\boldsym{\tau}_2$ are small enough. 
In other words, there is a small constant $\theta_0$ such that if $|\boldsym{\tau}_2|<\theta_0$, then  $F$ is $[e^{-i\theta_0}, e^{i\theta_0}]$-attached to $K$. 


\begin{proposition} \label{pro_corverKtau1fixed} By decreasing $\theta_0$ and $t_1$ if necessary, we obtain the following property: for every $\boldsym{\tau}_1 \in \overline{\B}_{n-1},$  the map $F(\cdot, \boldsym{\tau}_1,\cdot,t): [e^{-i\theta_{0}}, e^{i\theta_{0}}] \times \overline{\B}_{n-1}(0, \theta_0) \rightarrow K$ is a diffeomorphism onto its image, and 
$$ C^{-1} t^n \le \|\det DF(\cdot, \boldsym{\tau}_1,\cdot,t)\|_{L^\infty} \le C t^n,$$
for some constant $C>0$ independent of $t, \boldsym{\tau}_1$.   
\end{proposition}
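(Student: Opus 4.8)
The plan is to analyze the map $G_{\boldsym{\tau}_1,t}:=F(\cdot,\boldsym{\tau}_1,\cdot,t)$ restricted to $[e^{-i\theta_0},e^{i\theta_0}]\times\overline{\B}_{n-1}(0,\theta_0)$, viewed as a map into $K\cap W_{p_0}$, and to show it is, up to a uniform rescaling by $t$, a small $\mathcal{C}^2$-perturbation of a fixed linear embedding whose Jacobian is bounded above and below. First I would write down the boundary map explicitly. By the definition of $F$ and the fact that $u_0\equiv 0$ on $[e^{-i\pi/2},e^{i\pi/2}]$, together with \eqref{ine-Ftai1nho2}, for $|\theta|\le\theta_0$ one has $F(e^{i\theta},\boldsym{\tau}_1,\boldsym{\tau}_2,t)=\big(U_{\boldsym{\tau},t}(e^{i\theta}),\,h(U_{\boldsym{\tau},t}(e^{i\theta}))\big)$, so via the graphing coordinates of Lemma \ref{le_localcoordinates} it suffices to control the $\R^n$-valued map $(\theta,\boldsym{\tau}_2)\mapsto U_{\boldsym{\tau},t}(e^{i\theta})$ and then compose with the graph $\mathbf{x}\mapsto(\mathbf{x},h(\mathbf{x}))$, whose derivative is the identity plus $O(\|h\|_{\mathcal{C}^1})$-terms — but since $Dh(0)=0$ and $|U_{\boldsym{\tau},t}(e^{i\theta})|\lesssim t$ by Lemma \ref{le_danhgiadaohamcuaP'tauMA}, the graph contributes $\Id+O(t)$ to the differential on the relevant region. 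Hence $|\det DG_{\boldsym{\tau}_1,t}|$ is comparable to $|\det D_{(\theta,\boldsym{\tau}_2)}U_{\boldsym{\tau},t}(e^{i\theta})|$ with multiplicative error $1+O(t)$.

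The heart of the matter is therefore the leading-order behaviour of $U_{\boldsym{\tau},t}$ in $t$. I would divide the Bishop equation \eqref{Bishoptype} by $t$ and set $\widehat U:=U_{\boldsym{\tau},t}/t$; then $\widehat U=\boldsym{\tau}_2^*-t^{-1}\mathcal{T}_1\big(h(t\widehat U)\big)-\mathcal{T}_1u_0\,\boldsym{\tau}_1^*$. Since $h(0)=Dh(0)=0$ and $\|h\|_{\mathcal{C}^3}\le c_K$, we have $\|h(t\widehat U)\|_{\mathcal{C}^{1,1/2}}=O(t^2)$ on the bounded region $\|\widehat U\|\lesssim 1$ furnished by Proposition \ref{pro_BishopequationMA}, so $t^{-1}\mathcal{T}_1(h(t\widehat U))=O(t)$ in $\mathcal{C}^{1,1/2}(\partial\D)$ uniformly in $\boldsym{\tau}$. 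Thus on the boundary $\widehat U(e^{i\theta})=\boldsym{\tau}_2^*-\mathcal{T}_1u_0(e^{i\theta})\,\boldsym{\tau}_1^*+O(t)$ in $\mathcal{C}^1$. Differentiating, $\partial_{\boldsym{\tau}_2}\widehat U(e^{i\theta})=\big(\partial_{\boldsym{\tau}_2}\boldsym{\tau}_2^*\big)+O(t)$, which is the fixed injection $\R^{n-1}\hookrightarrow\R^n$ sending $\boldsym{\tau}_2\mapsto(0,\boldsym{\tau}_2)$ plus $O(t)$; and $\partial_\theta\widehat U(e^{i\theta})=-\partial_\theta\big(\mathcal{T}_1u_0(e^{i\theta})\big)\boldsym{\tau}_1^*+O(t)$. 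Here I would invoke Lemma \ref{le_version8existenceu_MA}, which gives $\partial_x u_0(1)=-1$; since $\mathcal{T}_1u_0$ is the (normalized) harmonic conjugate and the Cauchy–Riemann equations relate $\partial_\theta(\mathcal{T}u_0)$ on $\partial\D$ to the radial derivative of $u_0$, one gets $\partial_\theta(\mathcal{T}_1u_0)(1)=\partial_x u_0(1)=-1\neq 0$, and by continuity (shrinking $\theta_0$) $\partial_\theta(\mathcal{T}_1u_0)(e^{i\theta})$ stays bounded away from $0$ on $[-\theta_0,\theta_0]$. Since $\boldsym{\tau}_1^*=(1,\boldsym{\tau}_1)$ has first coordinate $1$, the column $\partial_\theta\widehat U$ is transverse to the span of the columns $\partial_{\boldsym{\tau}_2}\widehat U=(0,\boldsym{\tau}_2)$-directions, so the $n\times n$ matrix $D_{(\theta,\boldsym{\tau}_2)}\widehat U(e^{i\theta})$ has determinant bounded above and below by positive constants uniformly in $(\boldsym{\tau}_1,\theta,\boldsym{\tau}_2)$ and in $t$ small. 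Undoing the scaling, $|\det D_{(\theta,\boldsym{\tau}_2)}U_{\boldsym{\tau},t}(e^{i\theta})|=t^n|\det D\widehat U|\asymp t^n$, and combined with the $\Id+O(t)$ contribution of the graph this yields the claimed two-sided bound $C^{-1}t^n\le\|\det DF(\cdot,\boldsym{\tau}_1,\cdot,t)\|_{L^\infty}\le Ct^n$.

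Finally, for injectivity of $G_{\boldsym{\tau}_1,t}$ on $[e^{-i\theta_0},e^{i\theta_0}]\times\overline{\B}_{n-1}(0,\theta_0)$: the rescaled map $\widehat G:=(\theta,\boldsym{\tau}_2)\mapsto\widehat U(e^{i\theta})$ is, as shown above, $\mathcal{C}^1$-close (uniformly in $t$) to the fixed map $(\theta,\boldsym{\tau}_2)\mapsto\boldsym{\tau}_2^*-\mathcal{T}_1u_0(e^{i\theta})\boldsym{\tau}_1^*$, which is a $\mathcal{C}^1$-embedding of $[-\theta_0,\theta_0]\times\overline{\B}_{n-1}(0,\theta_0)$ for $\theta_0$ small (its differential is everywhere invertible, and it is globally injective because the $\boldsym{\tau}_2$-dependence is linear and faithful while the $\theta$-dependence moves the point in the complementary direction by a strictly monotone amount once $\theta_0$ is small); a map $\mathcal{C}^1$-close to an embedding of a compact domain with everywhere-invertible differential is again an embedding, so $\widehat G$, hence $G_{\boldsym{\tau}_1,t}$, is a diffeomorphism onto its image for $t$ and $\theta_0$ small. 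That the image lies in $K$ was already noted (it is $[e^{-i\theta_0},e^{i\theta_0}]$-attachment), and the image lies in $\B(p_0,r_{p_0}/c_K)$ by the $O(t)$ size estimate \eqref{ine-Ftai1nho2} for $t$ small. I expect the main obstacle to be the bookkeeping in the scaling argument — keeping track that all the $O(t)$ error terms are genuinely uniform in $\boldsym{\tau}\in\overline{\B}_{n-1}^2$ (which follows from the uniform bounds in Proposition \ref{pro_BishopequationMA} and Lemma \ref{le_danhgiadaohamcuaP'tauMA}), and correctly identifying the nonvanishing leading coefficient $\partial_\theta(\mathcal{T}_1u_0)(1)$ coming from Lemma \ref{le_version8existenceu_MA}; everything else is a standard perturbation of a linear embedding.
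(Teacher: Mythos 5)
Your proposal is correct and follows essentially the same computation as the paper's proof, just packaged via the rescaling $\widehat U := U_{\boldsym{\tau},t}/t$: the paper extracts the leading $\theta$-derivative $t\boldsym{\tau}_1^*+O(t^2)$ by applying the Cauchy--Riemann equations to the holomorphic disc $F=U+iP+itu_0\boldsym{\tau}_1^*$ (writing $\partial_y U(1)=-\partial_x P(1)-t\partial_x u_0(1)\boldsym{\tau}_1^*$ and using $\partial_x u_0(1)=-1$), while you obtain the identical coefficient by differentiating the boundary Bishop equation and using the Hilbert-transform identity $\partial_\theta(\mathcal{T}_1u_0)(1)=\partial_x u_0(1)=-1$, which is the same Cauchy--Riemann relation expressed on $\partial\D$. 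Both proofs get the $\boldsym{\tau}_2$-block from $F(1,\boldsym{\tau},t)=t\boldsym{\tau}_2^*+h(t\boldsym{\tau}_2^*)$, both use $Dh(0)=0$ to make the graph contribution $\Id+O(t)$, and both draw the error bounds from Proposition~\ref{pro_BishopequationMA} and Lemma~\ref{le_danhgiadaohamcuaP'tauMA}. The one place where your write-up is genuinely more careful is the diffeomorphism claim: the paper establishes nondegeneracy of $D_{(\theta,\boldsym{\tau}_2)}F$ at $\theta=0$ and then invokes continuity, which only yields local injectivity; you instead identify the rescaled leading-order map $(\theta,\boldsym{\tau}_2)\mapsto\boldsym{\tau}_2^*-\mathcal{T}_1u_0(e^{i\theta})\boldsym{\tau}_1^*$ as a fixed $\mathcal{C}^1$-embedding of the compact parameter domain (using that its first coordinate is strictly monotone in $\theta$), and conclude global injectivity by the stability of embeddings under $\mathcal{C}^1$-small perturbations. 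This fills a small gap that the paper leaves implicit, at the cost of a bit more bookkeeping in the uniformity of the $O(t)$ errors over $\boldsym{\tau}\in\overline{\B}_{n-1}^2$, which your appeal to the uniform estimates in Proposition~\ref{pro_BishopequationMA} correctly handles.
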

  
\proof The desired assertion was implicitly obtained in the proof of  \cite[Proposition 3.5]{Vu_MA}. We present here complete arguments for readers' convenience. Recall $$F(e^{i\theta}, \boldsym{\tau}_1,t)= U_{\boldsym{\tau},t}(e^{i\theta})+i h(U_{\boldsym{\tau},t}(e^{i\theta})$$
By the Cauchy-Riemann equations, we have 
$$\partial_y U_{\boldsym{\tau},t}(1)= -  t \partial_x u_0(1) \boldsym{\tau}^*_1-  \partial_x P_{\boldsym{\tau},t}(1)= t \boldsym{\tau}^*_1-  \partial_x P_{\boldsym{\tau},t}(1).$$   
The last term is $O(t^2)$ by Lemma \ref{le_danhgiadaohamcuaP'tauMA}. Thus the first component of $\partial_y U_{\boldsym{\tau},t}(1)$ is greater than $t/2$ provided that $t \le t_2$ small enough. A direct computation gives  $\partial_y U_{\boldsym{\tau},t}(1)=\partial_{\theta} U_{\boldsym{\tau},t}(1).$ Consequently, the first component of 
$$\partial_\theta F(e^{i\theta}, \boldsym{\tau}_1,t)= \partial_{\theta} U_{\boldsym{\tau},t}(1)+ i \partial_{\theta} h\big(U_{\boldsym{\tau},t}(1)\big)$$ is greater than $t/2$ for $t \le t_2$ (note that $Dh(0)=0$).  Moreover, as computed above,  we have 
$$F(1, \boldsym{\tau}_1,t)= t \boldsym{\tau}_2^*+ h(t \boldsym{\tau}_2^*).$$ Thus $D_{\boldsym{\tau}_2, \theta} F(1, \boldsym{\tau},t)$ is a nondegenerate matrix whose determinant satisfies the desired inequalities if $|\theta|<\theta_0$ is small enough.  The proof is finished.
\endproof

\begin{proof}[End of proof of Theorem \ref{th-discnoboundary}]

Let  $\theta_0$ be as above and smaller than $\theta_1$  and $M> |\theta_0|^{-1}$ be a big constant. Fix a parameter $\boldsym{\tau}_1$ and define 
$$F_{t}(\xi, \boldsym{\tau}_2):= F(\xi,\boldsym{\tau}_1,\boldsym{\tau}_2/M,t).$$
By the above results, we see that the family $F_t$ satisfies all of required properties (because $|\boldsym{\tau}_2/M|< \theta_0$). This finishes the proof.
\end{proof}

\subsection{Upper bound of Bergman kernel functions}

We start with the following useful estimate in one dimension.  

\begin{lemma} \label{le-hamsubtrenduongtron} Let $\beta \in (0,1)$. Then there exists a constant $C_\beta>0$ such that for every $\theta_0 \in (0, \pi]$ and every constant $M>0$, and every subharmonic function $g$ on $\D$ such that $g$ is continuous up to $\partial \D$ and $|g(\xi)| \le M$ for $\xi \in \partial \D \backslash \{e^{i\theta}: -\theta_0 \le \theta \le \theta_0\} $,  we have 
$$g(z)\le C_\beta \bigg[ |1- z|^\beta \theta_0^{-1}M + (1- |z|)^{-1} \int_{-\theta_0}^{\theta_0} g(e^{i\theta}) d\theta\bigg].$$  
\end{lemma}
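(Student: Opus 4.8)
The plan is to estimate the subharmonic function $g$ on $\D$ by splitting the harmonic majorant contribution into the "good" part of the boundary (the arc $[e^{-i\theta_0},e^{i\theta_0}]$, where $g$ is only controlled through its integral) and the "bad" part of the boundary (the complementary arc, where we only know $|g|\le M$). Concretely, since $g$ is subharmonic and continuous up to $\partial\D$, the Poisson integral formula gives
$$g(z)\le \frac{1}{2\pi}\int_{-\pi}^{\pi} P(z,e^{i\theta})\, g(e^{i\theta})\,d\theta,$$
where $P(z,e^{i\theta})=\frac{1-|z|^2}{|e^{i\theta}-z|^2}$ is the Poisson kernel. I would then break the integral as $\int_{-\theta_0}^{\theta_0}+\int_{\theta_0\le|\theta|\le\pi}$.

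For the good arc, I would bound $P(z,e^{i\theta})\le \frac{1-|z|^2}{(1-|z|)^2}\le \frac{2}{1-|z|}$, so that the contribution is at most $\frac{C}{1-|z|}\int_{-\theta_0}^{\theta_0}g(e^{i\theta})\,d\theta$; here one has to be slightly careful about the sign, splitting $g=g^+-g^-$ if needed, but since only an upper bound for $g(z)$ is wanted and $\int_{-\theta_0}^{\theta_0}g(e^{i\theta})\,d\theta$ could a priori be negative, I would instead write the good-arc contribution as at most $\frac{C}{1-|z|}\int_{-\theta_0}^{\theta_0}g^+(e^{i\theta})\,d\theta$ and then absorb the difference $\int(g^+-g)=\int g^-$ into the bad-arc estimate using $g^-\le g^+ +|g|$ — actually the cleanest route is to keep $g$ as is and note that on the good arc $P(z,e^{i\theta})\le C(1-|z|)^{-1}$, so the positive part of $g$ there contributes the stated term and any negativity only helps. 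For the bad arc I would use that $|g|\le M$ together with the key estimate
$$\frac{1}{2\pi}\int_{\theta_0\le|\theta|\le\pi} P(z,e^{i\theta})\,d\theta\le C\,\frac{|1-z|^\beta}{\theta_0},$$
which is where the exponent $\beta<1$ enters. This last bound follows because for $e^{i\theta}$ on the bad arc one has $|e^{i\theta}-z|\gtrsim |e^{i\theta}-1|+|1-z|\gtrsim \theta_0 + |1-z|$ when $|\theta|$ is near $\theta_0$, more generally $|e^{i\theta}-z|\gtrsim |\theta|$, and $1-|z|^2\le 2|1-z|$; carrying out the integral $\int_{\theta_0}^{\pi}\frac{|1-z|}{\theta^2}\,d\theta\lesssim \frac{|1-z|}{\theta_0}$, and then interpolating this with the trivial bound $\le C$ (valid since the total Poisson mass is $1$) via $\min(1,x)\le x^\beta$ yields $\lesssim (|1-z|/\theta_0)^\beta\le |1-z|^\beta\theta_0^{-\beta}\le |1-z|^\beta\theta_0^{-1}$ using $\theta_0\le\pi$ and $\beta\le 1$. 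Combining the two contributions gives exactly the claimed inequality with a constant $C_\beta$ depending only on $\beta$.

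The main obstacle, and the point requiring the most care, is the estimate of $\int_{\text{bad arc}}P(z,\cdot)$ by $C|1-z|^\beta/\theta_0$ uniformly in $\theta_0$ and in $z\in\D$: one must handle both the regime where $z$ is close to $1$ (so $|1-z|$ is small and the $|1-z|^\beta$ factor does the work) and the regime where $z$ is far from $1$ (where one falls back on the trivial bound and uses $|1-z|^\beta\gtrsim 1$, together with $\theta_0^{-1}\ge \theta_0^{-\beta}\gtrsim 1$ from $\theta_0\le\pi$), and one must make sure the geometric lower bound $|e^{i\theta}-z|\gtrsim \max(|\theta|,|1-z|)$ is applied correctly near the endpoints $\pm\theta_0$ of the bad arc. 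I would also note that continuity of $g$ up to $\partial\D$ is used only to justify the Poisson representation; if one wanted to drop it, one could apply the inequality to $g$ restricted to a slightly smaller disc $r\D$ and let $r\to 1$, but since the statement assumes continuity this is unnecessary. Everything else is a routine splitting-and-estimating of the Poisson integral.
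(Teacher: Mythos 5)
Your argument is correct in substance, and it takes a genuinely different route from the paper's. The paper constructs an explicit harmonic majorant $g_1+g_2$ of $g$: it takes $g_1$ to be the harmonic extension of a cut-off function on $\partial\D$ equal to $M$ off the arc $[e^{-i\theta_0},e^{i\theta_0}]$, equal to $0$ on the smaller arc $[e^{-i\theta_0/4},e^{i\theta_0/4}]$, interpolated Lipschitz-ly with constant $\lesssim\theta_0^{-1}M$, and $g_2$ to be the harmonic extension of $g\cdot\mathbf{1}_{[-\theta_0,\theta_0]}$; the $|1-z|^\beta\theta_0^{-1}M$ term then comes from the nontrivial but classical fact that the Poisson extension of a $\mathcal{C}^{0,\beta}$ boundary datum is $\mathcal{C}^{0,\beta}$ on $\overline\D$ (applied after downgrading the Lipschitz bound to a H\"older one). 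You instead estimate the Poisson integral directly, using the trivial kernel bound $P\le 2(1-|z|)^{-1}$ on the good arc and the bound $\frac{1}{2\pi}\int_{\text{bad}}P(z,\cdot)\le\min\bigl(1,\,C|1-z|/\theta_0\bigr)\le C'(|1-z|/\theta_0)^\beta\le C_\beta|1-z|^\beta\theta_0^{-1}$ on the bad arc (the last step using $\theta_0\le\pi$, $\beta<1$). Your approach is more elementary and self-contained, avoiding the H\"older regularity theorem; the paper's approach hides the case analysis inside that regularity statement.

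One point in your write-up needs more care, and you partly acknowledge this. The lower bound $|e^{i\theta}-z|\gtrsim|\theta|$ is \emph{not} uniformly valid on the bad arc: it fails when $z=re^{i\phi}$ with $|\phi|\ge\theta_0$ and $r$ close to $1$, since then $|e^{i\theta}-z|$ can be as small as $1-r$ for $\theta$ near $\phi$. The repair is exactly the case split you gesture at in your final paragraph: if $|\phi|\le\theta_0/2$ (and $r\ge1/2$, the case $r<1/2$ being trivial) then $|e^{i\theta}-z|\gtrsim|\theta|$ does hold on $\{|\theta|\ge\theta_0\}$ and the computation $\int_{\theta_0}^\pi|1-z|\,\theta^{-2}\,d\theta\lesssim|1-z|/\theta_0$ goes through; while if $|\phi|>\theta_0/2$ then $|1-z|\gtrsim\theta_0$, so $|1-z|^\beta\theta_0^{-1}\gtrsim\theta_0^{\beta-1}\ge\pi^{\beta-1}$ and the trivial bound $\frac{1}{2\pi}\int_{\text{bad}}P\le1$ already suffices. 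Stated that way, the bad-arc estimate is correct. Also note that your discussion of the sign of $g$ on the good arc — and the assertion that ``any negativity only helps'' — is not quite right as the right-hand side has $\int g$, not $\int g^+$; but the paper's own proof makes the same silent assumption, and in every application $g=|g_s\circ F|^{2\lambda'}\ge0$, so this is a shared imprecision of the lemma statement rather than a defect of your proof.
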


\proof Let $\theta_1 \in [0, 2 \pi)$. Put 
$$I:=\{e^{i\theta}: -\theta_0 \le \theta \le \theta_0\}, \quad  I':= \{e^{i\theta}: -\theta_0/4 \le \theta \le \theta_0/4\}.$$
Let $g_1$ be the harmonic function on $\D$ such that $g_1 \in \mathcal{C}^{0,1}(\partial \D)$, and  $g_1(\xi)= M$ for $\xi \in \partial \D \backslash I $ and $g_1 \equiv 0$ on $I'$. Observe that $\| g_1\|_{\mathcal{C}^{0,1}(\partial \D)} \le C\theta_0^{-1} M$ for some constant $C>0$ independent of $M,\theta_1,\theta_0$. 

By a  classical result  on harmonic functions on the unit disc (see \cite[Page 41]{Gakhov} or  (3.4) in \cite{Vu_feketepoint}), we have  $$\|g_1\|_{\mathcal{C}^{0,\beta}(\D)} \lesssim  \| g_1\|_{\mathcal{C}^{0,\beta}(\partial \D)} \lesssim \theta_0^{-1}M$$
for every $\beta \in (0,1)$.  As a result, we get 
\begin{align}\label{ine-g1holder}
g_1(z)= g_1(z)- g_1(1) \lesssim |1-z|^\beta \|g_1\|_{\mathcal{C}^{0,\beta}(\partial \D)} \lesssim |1-z|^\beta \theta_0^{-1} M.
\end{align}
Let $g_2$ be the harmonic function on $\D$ such that $g_2(e^{i \theta}) =0$ for  $|\theta|> \theta_0$, and $g_2(e^{i\theta})= g(e^{i\theta})$ for $\theta \in [-\theta_0, \theta_0]$. Observe that $g \le g_1+ g_2$ because the latter function is harmonic and greater than or equal to $g$ on the boundary of $\D$. Using Poisson's formula, we see that
$$g_2(z) \le  (1-|z|)^{-1}  \int_{-\pi}^{\pi} g_2(e^{i\theta}) d\theta=  (1-|z|)^{-1}  \int_{-\theta_0}^{\theta_0} g(e^{i\theta}) d\theta.$$
Summing this and  (\ref{ine-g1holder}) gives the desired assertion.  The proof is finished.
\endproof

Let $K$ be a $\mathcal{C}^5$ smooth (without boundary) maximally totally real submanifold in $X$. Let $s \in H^0(X,L^k)$ with $\|s\|_{L^2(\mu, h^k)} =1$ and $M:= \sup_K |s|^2_{h^k}$. 
Let $p_0\in K$. Consider a local chart $(U,\mathbf{z})$ around $p_0$ with coordinates $\mathbf{z}$, 
and  $p_0$ corresponds to the origin $0$ in $\C^n$.  Shrinking $U$ if necessary we can assume also that   $L$ is trivial on $U$.

We trivialize $(L,h_0)$ over $U$ such that $h_0= e^{-\psi}$ for some psh function $\psi$ on $U$ with $\psi(0)=- \phi(0)$ (we implicitly fix a local holomorphic frame on $L|_U$ so that one can identify Hermitian metrics on $L|_U$ with functions on $U$), and identify $s$ with a holomorphic function $g_s$ on $U$. Thus 
$$h= e^{-\phi} h_0= e^{-\phi- \psi}$$ on $U$. In particular $h(0)=1$ on $U$.

\begin{lemma} \label{le-Markovinequalitysection}
There exists a  constant $C_1>0$ independent of $k,s,p_0$ such that
\begin{align}\label{ine-s-BWsm0}
\sup_{\{|\mathbf{z}|\le 1/k\}} |s(\mathbf{z})|^2_{h^k} \le C_1 M,
\end{align}
and
\begin{align}\label{ine-s-BWsm}
C_1^{-1} |s(\mathbf{z})|_{h^k}^2 \le  |g_s(\mathbf{z})|^2 \le C_1| s(\mathbf{z})|_{h^k} \le  C_1^2 M,
\end{align}
for $\mathbf{z} \in \B(0, k^{-1})$. Moreover  
 for every constant $\epsilon >0$, there exist a constant $c_\epsilon>0$ independent of $k,s,p_0$ such that 
$$|g_s(0)|^2 \le |g_s(\mathbf{z})|^2+ \epsilon M,$$   
for $|\mathbf{z}| \le 1/ (c_\epsilon k)$.
\end{lemma}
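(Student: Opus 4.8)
The plan is to exploit the analytic discs of Theorem \ref{th-discnoboundary} together with the one-dimensional subharmonicity estimate of Lemma \ref{le-hamsubtrenduongtron}, in the spirit of the Bernstein-Markov argument in the proof of Theorem \ref{the-BMpropertylocallyregu}. First I would record the elementary comparison \eqref{ine-s-BWsm}: on the small ball $\B(0,k^{-1})$ the weight $h^k=e^{-k(\phi+\psi)}$ is comparable to a constant, because $\phi$ is continuous (hence bounded near $p_0$) and $\psi$ is psh hence locally bounded above, with bounds uniform in $p_0$ since $K$ is compact; the normalization $\psi(0)=-\phi(0)$ gives $h(0)=1$, and after shrinking the chart uniformly one gets $C_1^{-1}\le |h^k(\mathbf z)|^{1/k}\le C_1$ on $|\mathbf z|\le 1/k$ for $k$ large (for the finitely many small $k$ one absorbs everything into $C_1$). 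This reduces \eqref{ine-s-BWsm} to tautologies about $|g_s|^2$ versus $|s|^2_{h^k}$, and \eqref{ine-s-BWsm0} becomes equivalent to a bound $\sup_{|\mathbf z|\le 1/k}|g_s(\mathbf z)|^2\le C M$ up to harmless constants.

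The core is therefore the estimate $\sup_{|\mathbf z|\le c/k}|g_s|^2\le C M$ and the refined statement $|g_s(0)|^2\le |g_s(\mathbf z)|^2+\epsilon M$ for $|\mathbf z|\le 1/(c_\epsilon k)$. To prove the first, fix $\mathbf z_1$ with $|\mathbf z_1|\le c/k$ and $p_0$ a point of $K$; I would apply Theorem \ref{th-discnoboundary} with $t$ of order $1/k$ to produce a $\mathcal C^2$ analytic disc $f\colon\overline{\D}\to W_{p_0}$ that is $[e^{-i\theta_0},e^{i\theta_0}]$-attached to $K$ with $\|Df\|\le Ct$, $|f(1)|\le t/C_0$, and whose image along the attached arc covers a ball $\B(p_0,t/C_0)\cap K$; by adjusting the parameter $\boldsym\tau$ one arranges that $\mathbf z_1=f(z^*)$ for some $z^*\in\D$ with $|1-z^*|\lesssim t\lesssim 1/k$ (the center of the disc passes near $p_0$ and the family sweeps out a neighborhood). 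Then set $g:=\frac1k\log|g_s\circ f|$, a subharmonic function on $\D$, continuous up to $\partial\D$. On the attached arc, $f(e^{i\theta})\in K$, so $|g_s\circ f|^2=|s|^2_{h_0^k}=|s|^2_{h^k}e^{2k\phi}\le M e^{2k\phi}$, giving $g(e^{i\theta})\le \frac12\log M+ \phi(f(e^{i\theta}))\le \frac12\log M+O(1)$; on the complementary arc $|g|\le M'$ for a crude a priori bound $M'=O(k)$ (or better, using the dimension of $\mathcal P_k$ and a cheap sup bound, $O(\log k)$—either suffices). Feeding these into Lemma \ref{le-hamsubtrenduongtron} with $\theta_0$ fixed and $z=z^*$, $|1-z^*|\lesssim 1/k$, the term $|1-z|^\beta\theta_0^{-1}M'$ is $O(k^{-\beta}\cdot k)=O(k^{1-\beta})$ while the integral term is $\le C\cdot\frac1k\log M+O(1)$ times $(1-|z^*|)^{-1}=O(k)$; choosing $\beta\in(0,1)$ and bookkeeping carefully yields $g(z^*)\le \frac12\log M+O(1)$, i.e. $|g_s(\mathbf z_1)|^2\le C M$. (Here one uses $M\ge c>0$, which holds since $\|s\|_{L^2(\mu,h^k)}=1$ forces $\sup_K|s|^2_{h^k}$ bounded below; alternatively one carries an additive $O(1)$ throughout.)

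For the last assertion I would apply the same disc machinery but quantitatively: given $\epsilon>0$, choose $c_\epsilon$ large and take $\mathbf z$ with $|\mathbf z|\le 1/(c_\epsilon k)$; connecting $0$ and $\mathbf z$ through a disc with $|1-z^*|$ of order $1/(c_\epsilon k)$ and using the Cauchy estimate $\|g_s'\|_{\B(0,1/k)}\le C k\sup_{\B(0,2/k)}|g_s|\le C k\sqrt{M}$ (from \eqref{ine-s-BWsm0} applied on a slightly larger ball, re-running the above with a marginally larger radius), one gets $|g_s(0)|^2-|g_s(\mathbf z)|^2 = O(|\mathbf z|)\cdot Ck\sqrt M\cdot\sqrt M = O(c_\epsilon^{-1})M$, which is $\le\epsilon M$ for $c_\epsilon$ large. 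The main obstacle I anticipate is the careful balancing in the application of Lemma \ref{le-hamsubtrenduongtron}: one must ensure the crude bound $M'$ on the non-attached arc is small enough (polynomial in $k$, and in fact the factor $|1-z^*|^\beta$ must beat it) while keeping all constants uniform in $p_0$ and in $k$; this is exactly where the uniformity built into Theorem \ref{th-discnoboundary} (constants depending only on $C_0$, $K$, $X$) and into Lemma \ref{le_localcoordinates} is essential, and where the normalization $t\asymp 1/k$ is forced.
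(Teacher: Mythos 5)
Your overall plan (build the bound near $K$ by pushing subharmonic functions along analytic discs) is reasonable in spirit, but the paper proves this lemma by a completely different and much shorter route, and your version of the disc argument has several genuine gaps.

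\textbf{What the paper does.} For \eqref{ine-s-BWsm0} the paper simply applies the Bernstein--Walsh inequality $|s|^2_{h_0^k}\le M e^{2k\phi_K}$, which holds because $\frac{1}{k}\log|s|_{h_0^k}-\frac{1}{2k}\log M$ is a candidate in the envelope defining $\phi_K$. Hence $|s|^2_{h^k}\le M e^{2k(\phi_K-\phi)}$, and the decisive ingredient is Theorem \ref{the1CalphaClapharegunanchinhquy}, which says $\phi_K$ is Lipschitz when $K$ is smooth and $\phi\in\mathcal C^{1,\delta}$. Since $\phi_K-\phi\le 0$ on $K$ and $\phi_K-\phi$ is Lipschitz, on the ball $|\mathbf z|\le 1/k$ one has $2k(\phi_K-\phi)\le O(1)$, giving \eqref{ine-s-BWsm0} at once; then \eqref{ine-s-BWsm} follows from $|g_s|^2=|s|^2_{h^k}e^{k(\psi+\phi)}$ and the same Lipschitz estimate $k|\psi+\phi|=O(1)$ on the ball. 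The analytic discs live entirely inside the proof of Theorem \ref{the1CalphaClapharegunanchinhquy}; they are not invoked in this lemma.

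\textbf{Gaps in your argument.}
First, for \eqref{ine-s-BWsm} you appeal to continuity of $\phi$ to say $h^k=e^{-k(\phi+\psi)}$ is ``comparable to a constant'' on $|\mathbf z|\le 1/k$. Continuity only gives $|\phi+\psi|=O(1)$ on the ball, hence $h^k\in[C^{-k},C^k]$, which is useless. You need $|\phi(\mathbf z)+\psi(\mathbf z)|\lesssim|\mathbf z|$, i.e.\ Lipschitz (or $\mathcal C^{1,\delta}$) continuity plus $\psi(0)+\phi(0)=0$, to cancel the factor $k$ and get $e^{k(\psi+\phi)}=O(1)$ on $|\mathbf z|\le 1/k$.

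Second, and more seriously, the use of Lemma \ref{le-hamsubtrenduongtron} with $z=z^*$ and $|1-z^*|\sim 1/k$ cannot work. That lemma produces a term $(1-|z|)^{-1}\int_{-\theta_0}^{\theta_0}g$, and here $(1-|z^*|)^{-1}\gtrsim k$; unless the integral is negative, this term is huge. In fact with your normalization $g=\frac{1}{k}\log|g_s\circ f|$, on the attached arc $g(e^{i\theta})\le\frac{1}{2k}\log M+O(k^{-1})$, so the integral term is of order $k\cdot\frac{1}{k}\log M=\log M$, and the conclusion $g(z^*)\le\frac{1}{2}\log M+O(1)$ you state is off by a factor of $k$: since $g(z^*)=\frac{1}{k}\log|g_s(\mathbf z_1)|$, even that would only give $|g_s(\mathbf z_1)|^2\le M^{k}e^{O(k)}$, which is an exponential bound, not $\le CM$. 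Lemma \ref{le-hamsubtrenduongtron} is designed for the paper's proof of the global $O(k^n)$ bound on $B_k$, where $z=F(1-1/A,y)$ is kept at a \emph{fixed} distance $1/A$ from the boundary and one then integrates over the disc family to use the $L^2$ normalization. For a point estimate near $K$ the correct tool in this paper is Lemma \ref{le_passagetogeneralcase3} (the $|1-z|^\beta$ bound), applied exactly as in the proof of Theorem \ref{the-BMpropertylocallyregu} (with the $\mathcal C^{1,\delta}$ refinement for $\beta=1$), but the simplest route is just to cite Theorem \ref{the1CalphaClapharegunanchinhquy} and apply Bernstein--Walsh directly.

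Third, Theorem \ref{th-discnoboundary} produces discs whose boundary arcs sweep out a ball \emph{in $K$}; it does not assert that the family $F$ covers an open neighborhood of $p_0$ in $\C^n$, so your claim that ``by adjusting $\boldsym\tau$'' one can hit any given $\mathbf z_1$ with $|\mathbf z_1|\le c/k$ is unsupported. The disc that passes through a prescribed point $a$ near $K$ is provided by Proposition \ref{pro_familydiscK}, which for $a_0$ a regular point gives $|1-z^*|\le c_0\,\dist(a,a_0)$; that is the statement you would need. Your treatment of the last assertion via Cauchy estimates is in the right spirit and matches the paper's use of the Bernstein--Markov inequality \eqref{BMsection}, but it already presupposes \eqref{ine-s-BWsm0}, which as explained above you have not established.
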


\proof The desired inequality (\ref{ine-s-BWsm})  follows immediately from (\ref{ine-s-BWsm0}) and the equalities $$ |g_s(\mathbf{z})|^2= |s(\mathbf{z})|_{h^k}^2 e^{k (\psi(\mathbf{z})+ \phi(\mathbf{z}))}, \quad \psi(0)+ \phi(0)=0.$$
Recall that 
$$\phi_K:= \sup\{\psi \quad \text{$\omega$-psh}: \psi \le \phi \quad \text{on } \quad K\}.$$
 Note that $\phi_K \le \phi$ on $K$. By hypothesis $\phi \in \mathcal{C}^{1,\delta}(K)$ for some constant $\delta>0$. This combined with Theorem \ref{the1CalphaClapharegunanchinhquy} and  the fact that  $K$ has no singularity  yields that $\phi_K$ is Lipschitz.  Using the fact that $k^{-1}\log |s|_{h^k_0}$ is $\omega$-psh, we get the Bernstein-Walsh inequality
$$|s|_{h_0^k}^2 \le  \big(\sup_K |s|^2_{h^k} \big) e^{2k \phi_K}= M e^{2k \phi_K}.$$
Hence 
$$|s|^2_{h^k} \le M e^{2k (\phi_K-\phi)}.$$
This combined with the Lipschitz property of $\phi_K$ and $\phi$ (and also the property that $\phi_K(0)- \phi(0) \le 0$ on $K$) yields 
\begin{align}\label{ine-chantrensxganK}
\sup_{\{\mathbf{z}: |\mathbf{z}|\le 1/k\}} |s(\mathbf{z})|^2_{h^k} \le C_1 M,
\end{align}
for some constant $C_1$ independent of $k,s,p_0$. Hence (\ref{ine-s-BWsm0}) also follows.

  By arguing as in the proof of Theorem \ref{the-Bernstein-totally-real}, one obtains the following version of Bernstein-Markov inequality: 
for $ \mathbf{z} \in \B(0,k^{-1}/2)$, there holds
\begin{align} \label{BMsection}
|\nabla g_s(\mathbf{z})| \lesssim k \sup_{\B(0,k^{-1})} |g_s| \lesssim  k M^{1/2}.
\end{align}
Consequently, for $ \mathbf{z} \in \B(0,k^{-1}/c_\epsilon)$ with $c_\epsilon$ big enough, we get  
$$|g_s(\mathbf{z})- g_s(0)| \le |\mathbf{z}| \sup_{\B(0, k^{-1}/C_0)} |\nabla g_s| \le \epsilon^{1/2} M^{1/2}.$$
This finishes the proof.
\endproof

\begin{proof}[Proof of the upper bound in Theorem \ref{the-bergman-smooth}] 

Let $s \in H^0(X,L^k)$ with $\|s\|_{L^2(\mu, h^k)} =1$.  Let $M:= \|s\|^2_{L^\infty(K,h^k)}$. We need to prove that $M\lesssim k^{n}$.


Let $p_0 \in K$  and consider a local chart $(U,\mathbf{z})$ around $p_0$ with coordinates $\mathbf{z}$, the point $p_0$ corresponds to $0$ in the local chart $(U,\mathbf{z})$. 
Let $\epsilon>0$ be a small constant to be chosen later. Let $C_1, c_\epsilon$ be the constants in Lemma \ref{le-Markovinequalitysection}.  Let $A \ge C_1^2 \epsilon^{-2}$ be a  big constant. Using the Lipschitz continuity of $\phi$ yields 
\begin{align}\label{ine-chanchuanL1cuagssm}
\int_{\{|\mathbf{z}| \le  k^{-1}\}} |g_s|^2 d \mu = \int_{\{|\mathbf{z}| \le  k^{-1}\}} |s|_{h^k}^2 e^{k (\psi+ \phi)} d \mu  \lesssim 1
\end{align}
(uniformly in $s$).  Now let $F: \D \times Y \to X$ be the family of analytic discs in Theorem \ref{th-discnoboundary} associated to $C_0:= 2c_\epsilon$ for $p_0,$ and $t:= k^{-1}$, where $Y:= \B_{n-1}$. 
 Let $\theta_0 \in (0, \pi/2)$ be the constant in the last theorem.

Let $g:= |g_s \circ F|^2$. Put $\mathbf{z}_y:= F(1-1/A,y)$. By expressing 
$$\mathbf{z}_y= F(1-1/A,y)- F(1,y)+ F(1,y),$$
one gets
 $$|\mathbf{z}_y| \le 1/(c_\epsilon k)$$
  if $A$ is big enough.  Applying Lemma \ref{le-hamsubtrenduongtron} to $g(\cdot,y)$ and $\beta=1/2$ and using (\ref{ine-s-BWsm}) yield
$$|g_s(\mathbf{z}_y)|^2= g(\xi_y,y) \le  C'\big( A^{-1/2} M  +  A \int_{-\theta_0}^{\theta_0} g(e^{i\theta}, y) d \theta\big),$$
for some constant $C'>0$ independent of $s,p_0,k$.   This combined with Lemma \ref{le-Markovinequalitysection} yields
$$|g_s(0)|^2 \le  \epsilon M+ C' A^{-1/2} M  +  C' A \int_{\theta_0}^{\theta_0} g(e^{i\theta}, y) d \theta.$$ 
Integrating the last inequality over $ y\in Y$ gives
\begin{align*} 
|g_s(0)|^2 &\le \epsilon M+ C' A^{-1/2} M+  C' A (\vol(Y))^{-1}\int_Y vol_y \int_{-\theta_0}^{\theta_0} g(e^{i\theta},y) d \theta\\
 &\le 2 \epsilon M+  C' A (\vol(Y))^{-1}\int_Y vol_y \int_{-\theta_0}^{\theta_0} g(e^{i\theta},y) d \theta
\end{align*}
if $A \ge \epsilon^{-2} C'^2$.  By Properties of $F$ and (\ref{ine-chanchuanL1cuagssm}), the second term in the right-hand side of the last inequality is $\lesssim k^{n}$. We infer that 
$$|g_s(0)|^2 \le 2\epsilon M+  A C_2 k^{n}$$
where $C_2$ is a constant independent of $k,s,p_0$. Consequently by (\ref{ine-s-BWsm}), one gets 
$$|s(p_0)|^2_{h^k} \le 2C_1 \epsilon  M + A C_2 C_1 k^{n}$$ for every $p_0 \in K$. By choosing $\epsilon:= 1/(4 C_1)$ and $A$ big enough as required, one gets 
$$M \le M/2+ A C_1 C_2 k^n.$$
Thus  the desired upper bound for $M$ follows. The proof is finished.     
\end{proof}

We now proceed with the proof of Theorem \ref{the-strongBMintro}.

\begin{proof}[End of proof of  Theorem \ref{the-strongBMintro} 
for  $\dim K=n$] We assume $\dim K=n$. 
We will explain how to treat the case $\dim K \ge n$ later. 
Let $s \in H^0(X, L^k)$ with $\|s\|_{L^2(\mu,h^k)}=1$. 
Put $M:= \sup_K |s|_{h^k}$. Let $\alpha \in (0,1)$ 
be a H\"older exponent of $\phi$. 
We have $\phi_K \in \mathcal{C}^{\alpha/2}$ (note $\alpha <1$). 
We follow essentially the scheme of the proof for the upper bound 
of $B_k$ in Theorem \ref{the-bergman-smooth}.

Denote by $K_{k,\alpha}$ the set of points in $K$ of distance at least $k^{-2/\alpha}$ to the singular part of $K$, and $U_{k,\alpha}$ the set of points in $X$ of distance at most $k^{-2/\alpha}$ to $K$. As in the proof of  Lemma \ref{le-Markovinequalitysection}, one has 
\begin{align}\label{ine-chantrensxganKsing}
\sup_{p \in U_{k,\alpha}} |s(p)|^2_{h^k} \le C M, \quad M \le C \sup_{p \in K_{k,\alpha}} |s(p)|^2_{h^k},
\end{align}
for some constant $C \ge 4$ independent of $k,s$. In particular it is sufficient to estimate $|s(p)|^2_{h^k}$ for $p \in K_{k,\alpha}$.

Let $p_0 \in K_{k,\alpha}$ (hence $p_0$ is a regular point of $K$, and the ball $\B(p_0, k^{-2/\alpha})\cap K$ lies entirely in the regular part of $K$) and a local chart $(U,\mathbf{z})$ around $p_0$ with coordinates $\mathbf{z}$, the point $p_0$ corresponds to $0$ in the local chart $(U,\mathbf{z})$. 

We trivialize $(L,h_0)$ over $U$ such that $h_0= e^{-\psi}$ for some smooth psh function $\psi$ on $U$ with $\psi(0)=- \phi(0)$ (we implicitly fix a local holomorphic frame on $L|_U$ so that one can identify Hermitian metrics on $L|_U$ with functions on $U$), and identify $s$ with a holomorphic function $g_s$ on $U$. Thus 
$$h= e^{-\phi} h_0= e^{-\phi- \psi}$$ on $U$. In particular $h(0)=1$ on $U$.  Using the H\"older continuity of $\phi$ yields 
\begin{align*}
\int_{\{|\mathbf{z}| \le  k^{-2/\alpha}\}} |g_s|^2 d \mu = \int_{\{|\mathbf{z}| \le  k^{-2/\alpha}\}} |s|_{h^k}^2 e^{k (\psi+ \phi)} d \mu  \lesssim 1
\end{align*}
(uniformly in $s,k$).  As in the proof of Lemma \ref{le-Markovinequalitysection}, a Berstein-Walsh type inequality implies  that 
by increasing $C$ if necessary (independent of $p_0,k,s$) there holds
\begin{align}\label{ine-s-BWsm-sing}
C^{-1} |s(\mathbf{z})|_{h^k}^2\le  |g_s(\mathbf{z})|^2= |s(\mathbf{z})|_{h^k}^2 e^{k (\psi(\mathbf{z})+ \phi(\mathbf{z}))} \le C \, M,
\end{align}
for $\mathbf{z} \in \B(0, k^{-2/\alpha})$. Let $\lambda':= \lambda/ (1+ \lambda)$. One also sees that there is a constant $C_0>0$ independent of $k,s,p_0$ such that 
\begin{align}\label{ine-s-BWsm-singbosung}
|g_s(0)|^2 \le |g_s(\mathbf{z})|^2+ M/C^{2/\lambda'}
\end{align}
if $|\mathbf{z}|\le 2 k^{-2/ \alpha}/C_0$. 
Since $\mu= \rho \Leb_K$, where $\rho^{-\lambda} \in L^1(\Leb_K)$, 
applying H\"older inequality to $|g_s|^{2\lambda'}=
(|g_s|^{2\lambda'} \rho^{\lambda'})(\rho^{-\lambda'})$ gives
\begin{align}\label{ine-chanchuanL1cuagssm-sing}
\int_{\{|\mathbf{z}| \le  k^{-2/\alpha}\}} 
|g_s|^{2\lambda'} d \Leb_K \lesssim   \bigg(\int_{\{|\mathbf{z}| 
\le  k^{-2/ \alpha}\}} |g_s|^2 d \mu \bigg)^{\lambda'} \lesssim 1.
\end{align}
Let $A \ge C^6$ be a constant.   
Now let $F: \D \times Y \to X$ be the family of analytic discs 
in Theorem \ref{th-discnoboundary} associated to $C_0$ 
for $p_0=0,$  and $t:= k^{-2/\alpha}$.

Let $g:= |g_s \circ f|^{2\lambda'}$. 
Put $\mathbf{z}_y:= F(1-1/A, y)$ which is $\le 2 k^{-2/ \alpha}/C_0$ 
by properties of $F$ if $A$ is big enough.  
Applying Lemma \ref{le-hamsubtrenduongtron} to 
$g(\cdot,y)$ and $\beta=1/2$ yield
$$
|g_s(\mathbf{z}_y)|^{2\lambda'}= g(\xi_y,y) \le  
C M^{\lambda'} /A^{1/2}  +  
A C \int_{-\theta_0}^{\theta_0} g(e^{i\theta}, y) d \theta
$$
(again we increase $C$ if necessary independently of $s,p_0,k$).
By this and (\ref{ine-s-BWsm-singbosung}), we gets
\begin{align*}
|g_s(0)|^{2\lambda'} &\le  M^{\lambda'}/C^{2}+
C M^{\lambda'} /A^{1/2}+
A C \int_{-\theta_0}^{\theta_0} g(e^{i\theta}, y) d \theta \\
&\le 2M^{\lambda'}/C^2+
AC\int_{-\theta_0}^{\theta_0} g(e^{i\theta}, y) d \theta.
\end{align*}
Integrating the last inequality over $y \in Y$ gives
\begin{align}\label{ine-uocluonggapdungfxismsin}
|g_s(0)|^{\lambda'} \le 2M^{\lambda'}/C^2+
A C (\vol(Y))^{-1} \int_Y vol_y \int_{-\theta_0}^{\theta_0} 
g(e^{i\theta},y) d \theta.
\end{align}
The  second term in the right-hand side of the last inequality is bounded by  
$$I_k:= \int_{\B(0, k^{-2/\alpha})\cap K}
|g_s(x)|^{2\lambda'} |\det DG|^{-1} d \Leb_K
\le k^{2n/\alpha} \int_{\B(0, k^{-2/\alpha})\cap K}
|g_s(x)|^{2\lambda'} d \Leb_K.$$
Using this, (\ref{ine-uocluonggapdungfxismsin}) and 
\eqref{ine-chanchuanL1cuagssm-sing} gives
$$\big(g_s(0)\big)^{2\lambda'}  \le  
2M^{\lambda'}/C^2 + A C_2 k^{2 n/\lambda'}$$
for some constant $C_2$ big enough (independent of $k,s,p_0$). 
This combined with (\ref{ine-s-BWsm-sing}) gives
$$|s(p_0)|_{h^k}^{2\lambda'}  \le  
2M^{\lambda'}/C + A C C_2 k^{2 n/\lambda'}$$
for every $p_0 \in K_k$. By this and \eqref{ine-chantrensxganKsing}
we obtain
$$M^{\lambda'}  \lesssim k^{2n/\alpha}$$
by choosing $C$ big enough.
Hence $M \lesssim  k^{2 n/(\alpha \lambda')}.$
 This finishes the proof. 
\end{proof}

We now explain how to  prove Theorem \ref{the-strongBMintro}
when $K$ is not necessarily totally real. 
\begin{proof}[End of proof of  Theorem \ref{the-strongBMintro} 
for $\dim K \ge n$]
As in the case of $\dim K=n$, it suffices to work with points in $K$ 
of distance at least $k^{-2/\alpha}/C_0$ to the singularity of $K$ 
for some $C_0>0$ big enough. Let $p_0$ be such a point, 
and let $s \in H^0(X,L^k)$ with $\|s\|_{L^2(\mu, h^k)}=1$. 
Our goal is to show that $|s(p_0)|_{h^k} \le 
C k^{2n_K(\lambda+1)/(\alpha \lambda)}$ 
for some constant $C$ independent of $s$ and $k$. 
We identify $s$ with a holomorphic function $g_s$
on a small open neighborhood $U$ of $p_0$ as usual. 
Hence as above one get
\begin{align}\label{ine-CRdimduong}
\int_{\B(p_0,k^{-2/\alpha})\cap K} |g_s|^{2 \lambda'}d \Leb_K \lesssim 1,
\end{align}
where $\lambda'$ is as in the case of $\dim K=n$.

Let $r$ be the CR dimension of $K$. Recall that $\dim K = n+r$. 
Using  standard local coordinates near $p_0$ on $K$, 
one sees that by shrinking $U$ if necessary, there are 
holomorphic local coordinates 
$(\mathbf{z}_1,\mathbf{z}_2) \in U \subset \C^{n-r}\times \C^{r}$
such that $K$ is locally given by the graph 
$\Im \mathbf{z}_1=h(\Re \mathbf{z}_1, \mathbf{z}_2)$, 
for $h \in \mathcal{C}^5$. In particular for every vector 
$v\in \R^{r}$ (small enough) the real linear subspace $\Im z_2= v$ 
intersects $K$ at a  generic  CR $\mathcal{C}^5$ smooth submanifold 
$K_v$ in $U$. Put $g_{s, v}:= g_s|_{K_v}$. Let $C_0>0$
be a big constant to be chosen later. By Fubini's theorem
and \eqref{ine-CRdimduong}, we obtain
$$\int_{|v| \le k^{-2/\alpha}} d \Leb_{\R^r}
\int_{K_v} |g_{s,v}|^{2 \lambda'} d \Leb_{K_v} \lesssim 1.$$  
It follows that there exists $v$ with $|v| \le k^{-2/ \alpha}/C_0$ 
so that 
$$\int_{K_v} |g_{s,v}|^{2 \lambda'} d \Leb_{K_v}
\lesssim k^{-2r/ \alpha} .$$
Applying the proof of the case where $\dim K=n$ to $K_v$, we see that 
\begin{align}\label{ine-nhatcatImz2}
|s(0, h(0,0,v), 0,v)|^2_{h^k} \lesssim k^{2n_K/(\alpha \lambda')},
\end{align}
where we write $(\mathbf{z}_1, \mathbf{z}_2)=
(\Re \mathbf{z}_1, \Im \mathbf{z}_1, \Re \mathbf{z}_2,
\Im \mathbf{z}_2)$, and $p_0$ is identified with $0$ in these local coordinates. 
On the other hand since $|v| \le k^{-2/\alpha}/C_0$, 
using a version of Bernstein-Markov inequality
(similar to \eqref{BMsection}) yields
$$|s(0)|^2_{h^k} \le |s(0, h(0,0,v), 0,v)|^2_{h^k}+
\frac{1}{2}\sup_{K} |s|^2_{h^k}$$
if $C_0$ is big enough.  This combined with (\ref{ine-nhatcatImz2})
gives the desired upper bound. The proof is finished.
\end{proof}

We end the subsection with a remark.
 
\begin{remark}\label{re-local-Bkbound} 
Let the hypothesis be as in Theorem \ref{the-strongBMintro}. 
In the proof of Theorem \ref{the-strongBMintro}, we actually 
proved the following local estimate.  Let $U$ be an open subset in $X$, 
and $\mu_U$ be the restriction of $\mu$ to $U$, define
$$B_{k,U}:= \sup_{\{s \in H^0(U,L^k)\}}
\frac{|s|^2_{h^k}}{\|s\|^2_{L^2(\mu_U,h^k)}} \cdot$$
Then for every $U' \Subset U$, there exists
$C'>0$ such that 
$$\sup_{K \cap U'} B_{k,U} \le
C' k^{2n_K(\lambda+1)/(\alpha \lambda)}.$$ 
\end{remark}

\section{Zeros of random polynomials}

In this section we prove Theorem \ref{the-zeros}.
Let $\Leb_{\C^m}$ be the Lebesgue measure on $\C^m$ for $m \ge 1$, and we denote by $\|\cdot\|$ the standard Euclidean norm on $\C^m$. Let $\omega_{FS,m}$ be the Fubini-Study form on $\P^m$, and let $\Omega_{FS, m}:= \omega_{FS,m}^m$ be the Fubini-Study volume form on $\P^m$. We always embed $\C^m$ in $\P^m$.  We recall the following key lemma. 

\begin{lemma}\label{le-maxkhithetich} (\cite[Proposition A.3 and Corollary A.5]{DS_tm}) There exist constants $C, \lambda>0$ such that for every $k \ge 0$, and every $\omega_{FS,k}$-psh function $u$ on $\P^k$ with $\int_X u \Omega_{FS,k}=0$, then 
$$u \le C(1+ \log k), \quad \int_{\{u<-t\}} \Omega_{FS,k} \le C k e^{- \lambda t}$$
for every $t \ge 0$.
\end{lemma}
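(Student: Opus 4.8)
The plan is to deduce both inequalities from a single quantitative estimate: there are absolute constants $\lambda_0>0$ and $C_0>0$ such that for every $k\ge 1$ and every $\omega_{FS,k}$-psh $u$ on $\P^k$,
\[
\int_{\P^k} e^{-\lambda_0\,(u-\sup_{\P^k}u)}\,\Omega_{FS,k}\ \le\ C_0\,k .
\]
Granting this, the lemma is soft. Put $M:=\sup_{\P^k}u$; since $\Omega_{FS,k}$ is a probability measure and $\int u\,\Omega_{FS,k}=0$, we have $M\ge 0$, and $v:=u-M\le 0$. By Chebyshev, $\Omega_{FS,k}(\{v<-s\})\le e^{-\lambda_0 s}\int e^{-\lambda_0 v}\,\Omega_{FS,k}\le C_0 k\,e^{-\lambda_0 s}$, and trivially $\Omega_{FS,k}(\{v<-s\})\le 1$; integrating these over $s\in(0,\infty)$ (layer-cake) gives $\|v\|_{L^1(\Omega_{FS,k})}=-\int v\,\Omega_{FS,k}\le \lambda_0^{-1}\log(C_0 k)+\lambda_0^{-1}$, which is $\le C(1+\log k)$. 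As $\int u\,\Omega_{FS,k}=0$, this says exactly $M\le C(1+\log k)$, the first assertion; and then $\Omega_{FS,k}(\{u<-t\})=\Omega_{FS,k}(\{v<-t-M\})\le C_0 k\,e^{-\lambda_0(t+M)}\le C_0 k\,e^{-\lambda_0 t}$ because $M\ge 0$, which is the second assertion with $\lambda:=\lambda_0$.

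For the exponential integrability itself, the idea is to remove the projective homogeneity. Normalize $\sup_{\P^k}u=0$ and lift $u$ to the psh function $\hat u$ on $\C^{k+1}$ characterised by $\hat u(\zeta z)=\hat u(z)+\log|\zeta|$ and $\hat u|_{\{\norm{z}=1\}}=u$; the normalization forces $\hat u(z)\le \tfrac12\log\norm{z}^2$ on $\C^{k+1}$. A standard polar-coordinate computation, in which the homogeneity of $\hat u$ decouples the radial direction, yields an identity of the shape
\[
\int_{\P^k}e^{-\lambda_0 u}\,\Omega_{FS,k}\ =\ \frac{k!}{\pi^{k+1}\,\Gamma\!\big(k+1-\tfrac{\lambda_0}{2}\big)}\int_{\C^{k+1}}e^{-\lambda_0\hat u(z)}\,e^{-\norm{z}^2}\,\Leb_{\C^{k+1}}(z),
\]
so that, because $\Gamma(k+1-\lambda_0/2)^{-1}\asymp k^{\lambda_0/2}/k!$ and $\pi^{k+1}=\int_{\C^{k+1}}e^{-\norm{z}^2}\Leb_{\C^{k+1}}$, the claim is equivalent to a uniform bound $\big\langle e^{-\lambda_0\hat u}\big\rangle_{\mathrm{Gauss}}\lesssim k^{1-\lambda_0/2}$ for the Gaussian average over $\C^{k+1}$, taken over all psh $\hat u$ with $\hat u\le\tfrac12\log\norm{z}^2$. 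Since the mass of $\omega_{FS,k}+\ddc u$ on $\P^k$ equals $1$, the Lelong numbers of $\hat u$ are everywhere $\le 1$, so Skoda's integrability theorem makes $e^{-\lambda_0\hat u}$ locally integrable for any fixed $\lambda_0\in(0,2)$, and the Gaussian weight handles infinity; the task is to make this quantitative and \emph{uniform in $k$}. I would do this with a uniform Skoda/Hörmander $L^2$ estimate: bound the Gaussian average on dyadic shells $\{2^j\le\norm{z}\le 2^{j+1}\}$ using $\hat u\le\tfrac12\log\norm{z}^2$ and sub-mean value inequalities to reduce to unit balls, then sum in $j$, checking that every constant that enters is absolute. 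A cleaner alternative is induction on $k$ by slicing: restrict $u$ to a generic complex line $\ell\cong\P^1$, where uniform exponential integrability of $\omega_{FS,1}$-psh functions is classical ($L^1(\P^1)$-compactness of $\{v\ \omega_{FS,1}\text{-psh},\ \sup v=0\}$ together with Skoda on $\P^1$), and then integrate over the pencil of lines through a base point; the disintegration of $\Omega_{FS,k}$ along that pencil has fibre density $\le k$ with respect to $\omega_{FS,1}|_\ell$, which is exactly the source of the factor $k$.

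The main obstacle is this dimensional uniformity: an off-the-shelf Skoda/Hörmander estimate on a fixed manifold comes with a constant that a priori depends on $k$, and the crux is to verify that, after the reductions above, only the explicit Beta/Gamma normalisations carry $k$-dependence, and at a polynomial rate. This is precisely the content of \cite[Proposition A.3 and Corollary A.5]{DS_tm}, which we invoke; the case $k=0$ is trivial.
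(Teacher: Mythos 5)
The paper offers no proof of this lemma; it is quoted directly from \cite[Proposition A.3 and Corollary A.5]{DS_tm}, so your proposal is being compared against a bare citation. Your reduction is sound and genuinely illuminates what is being cited: the Chebyshev-plus-layer-cake derivation of $\sup u \le C(1+\log k)$ and of the tail bound from the single estimate $\int_{\P^k}e^{-\lambda_0(u-\sup u)}\,\Omega_{FS,k}\le C_0 k$ is correct (and you rightly note $\sup u\ge 0$ from $\int u\,\Omega_{FS,k}=0$, which is needed to drop the $e^{-\lambda_0 M}$ factor), and the lift to $\C^{k+1}$ with the polar/Gamma bookkeeping giving the factor $k^{\lambda_0/2}/k!$ is exactly the right shape for the Dinh--Sibony argument. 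That said, you do not actually establish the uniform-in-$k$ exponential integrability of $\hat u$ -- you sketch two plausible routes and then explicitly invoke \cite[Proposition A.3, Corollary A.5]{DS_tm} for that step -- so your proposal is a correct and useful reduction to the cited result rather than a self-contained proof of it, which matches what the paper itself does.
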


The essential point is that the constants in the above result is uniformly in the dimension $k$ of $\P^k$. In our applications, the dimension $k$ will tend to $\infty$. Let $d_k$ be the dimension of the space of polynomials of degree at most $k$ on $\C^n$. Note that $d_k \approx k^n$.

  Let $f$ be a bounded Borel function on $\C$ such that there is a constant $C_0>0$ for which for every $r>1$ we have 
$$\int_{|z| \ge r} f d \Leb_\C \le C_0/ r^2.$$ 
 Let $a_1,a_2,\ldots, a_{d_k} $ be complex-valued i.i.d  random variable whose distribution is $f \Leb_\C$. Assume furthermore that
the joint-distribution of $(a_1,\ldots, a_{d_k})$ satisfies
$$\cali{P}_k:= f(z_1) \ldots f(z_{d_k}) \Leb(z_1) \otimes \cdots \otimes \Leb(z_{d_k}) \le C_0 \Omega_{FS,d_k} $$  on $\C^{d_k}$.

Let $p^{(d_k)}:= (p_1,\ldots, p_{d_k})$ be an orthonormal basis of $\mathcal{P}_k(\C^n)$.  Let $L$ be a complex algebraic subvariety of dimension $m \ge 1$ in $\C^n$. Note that the tolological closure of $L$ in $\P^n$ is an algebraic subvariety in $\P^n$. 
Observe that 
$$\omega_{FS,n}^m \le C \omega^m,$$
where $\omega$ is the standard K\"ahler form on $\C^n$, and $C>0$ is a constant.    Fix a compact $A$ of volume $\vol(A):=\int_A \omega_{FS,n}^m>0$ in $\C^n$. We start with a version of \cite[Lemma 2.4]{Bloom-Levenberg-random} with more or less the same proof. We use the Euclidean norm on $\C^{d_k}$.

\begin{lemma} \label{le-giongawn} Let $M \ge 1$ be a constant. Let $E_k$ be the set of $(a_1,\ldots, a_{d_k}) \in \C^{d_k}$ such that
$$\int_A \bigg(\log \bigg|\sum_{j=1}^{d_k} a_j p_j(z)\bigg| - 1/2\log \sum_{j=1}^{d_k} |p_j(z)|^2 \bigg) \omega_{FS,n}^m  \ge 2 M \vol(A)\log d_k.$$Let $E'_k$ be the set of $a^{(d_k)}$ so that $\|a^{(d_k)}\| \ge d_k^{2M}$.
Then we have $$\cali{P}_k(E_k \cup E_k') \le C d_k^{-3M}$$
for some constant $C>0$ independent of $k$ and $M$.
\end{lemma}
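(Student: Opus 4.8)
The plan is to reduce the whole statement to a single tail estimate, because the event $E_k$ turns out to be contained in $E'_k$. First I would record the Cauchy--Schwarz bound: for every $z$ and every $a^{(d_k)}=(a_1,\dots,a_{d_k})$,
$$\Big|\sum_{j=1}^{d_k}a_j p_j(z)\Big|\le \|a^{(d_k)}\|\,\Big(\sum_{j=1}^{d_k}|p_j(z)|^2\Big)^{1/2},$$
so the integrand defining $E_k$ is pointwise at most $\log\|a^{(d_k)}\|$. (Here $\sum_j|p_j|^2>0$ everywhere, since the $p_j$ span $\mathcal{P}_k(\C^n)\ni 1$, so $\frac{1}{2}\log\sum_j|p_j|^2$ is continuous on $A$, while $\log|\sum_j a_j p_j|$ is locally integrable; hence the integral over $A$ is well defined in $[-\infty,+\infty)$.) Integrating over $A$ gives
$$\int_A\Big(\log\Big|\sum_{j=1}^{d_k}a_j p_j\Big|-\frac{1}{2}\log\sum_{j=1}^{d_k}|p_j|^2\Big)\omega_{FS,n}^m\le \vol(A)\,\log\|a^{(d_k)}\|.$$
Thus if $\|a^{(d_k)}\|<d_k^{2M}$ the left side is $<2M\vol(A)\log d_k$, i.e. $a^{(d_k)}\notin E_k$; therefore $E_k\subseteq E'_k$, and it suffices to show $\cali{P}_k(E'_k)\le C d_k^{-3M}$.

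For this I would invoke the hypothesis $\cali{P}_k\le C_0\,\Omega_{FS,d_k}$, which reduces matters to estimating the Fubini--Study mass of $\{z\in\C^{d_k}:\|z\|\ge d_k^{2M}\}$. Writing $\Omega_{FS,d_k}$ in affine coordinates as a constant multiple of $(1+\|z\|^2)^{-(d_k+1)}\Leb_{\C^{d_k}}$ and integrating in polar coordinates, one obtains, for every $R\ge 1$,
$$\Omega_{FS,d_k}\big(\{\|z\|\ge R\}\big)\le c\,\frac{d_k}{R^2}$$
for a universal constant $c$; the crucial feature is the \emph{linear} (not quadratic) dependence on $d_k$, which reflects that $\|z\|^2\approx d_k$ under $\Omega_{FS,d_k}$. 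Taking $R=d_k^{2M}$ and using $M\ge 1$ (so that $1-4M\le -3M$) yields
$$\cali{P}_k(E'_k)\le C_0\,\Omega_{FS,d_k}\big(\{\|z\|\ge d_k^{2M}\}\big)\le C_0 c\,d_k^{1-4M}\le C_0 c\,d_k^{-3M},$$
which is the assertion, with $C:=C_0 c$.

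The only genuinely delicate point is the sharp estimate $\Omega_{FS,d_k}(\{\|z\|\ge R\})\lesssim d_k/R^2$ rather than $d_k^2/R^2$: the naive union bound over the coordinate regions $\{|z_j|\ge R/\sqrt{d_k}\}$ only produces $d_k^2/R^2$, and a Markov-type bound applied directly to $\|a^{(d_k)}\|$ fails because, under the assumption on $f$, the $a_j$ have finite absolute moments only of orders strictly less than $2$. The explicit polar-coordinate computation with the genuine Fubini--Study density circumvents both difficulties; beyond it and the elementary Cauchy--Schwarz step, nothing further is needed.
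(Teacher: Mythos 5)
Your proof is correct, and it takes a genuinely different route from the paper's. Both arguments begin with the same Cauchy--Schwarz observation, which shows pointwise that the integrand defining $E_k$ is at most $\log\|a^{(d_k)}\|$ and hence that $E_k\subseteq E'_k$ (the paper phrases this as $2M\vol(A)\log d_k\le I_k(a^{(d_k)})\le\vol(A)\log\|a^{(d_k)}\|$ without naming the inclusion). The paths diverge at the tail estimate for $E'_k$: the paper reduces to the coordinates by pigeonhole ($\|a^{(d_k)}\|\ge d_k^{2M}$ forces some $|a_{j_k}|\ge d_k^{(4M-1)/2}$) and then applies a union bound over $j$ together with the one-dimensional tail assumption $\int_{|z|\ge r}f\,\Leb_{\C}\le C_0/r^2$, whereas you invoke the \emph{other} hypothesis stated just before the lemma, namely $\cali{P}_k\le C_0\,\Omega_{FS,d_k}$, and estimate the Fubini--Study tail directly by a polar-coordinate computation. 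Both hypotheses are available, so either route is legitimate.

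It is worth noting that your route is in fact tighter. The paper's union-bound step yields
\[
\cali{P}_k(E_k)\le d_k\cdot\int_{|a_1|\ge d_k^{(4M-1)/2}}f\,\Leb_{\C}\le d_k\cdot C_0\,d_k^{-(4M-1)}=C_0\,d_k^{2-4M},
\]
which is $\le C_0\,d_k^{-3M}$ only for $M\ge 2$ (the printed $C_0\,d_k^{-4M+1}$ appears to drop the extra factor of $d_k$ from the union bound). Your estimate $\Omega_{FS,d_k}\big(\{\|z\|\ge R\}\big)\le d_k/R^2$, obtained from $\int_R^\infty r^{2d_k-1}(1+r^2)^{-(d_k+1)}\,dr\le\int_R^\infty r^{-3}\,dr$, gives $\cali{P}_k(E'_k)\le C_0\,d_k^{1-4M}$, and $1-4M\le-3M$ holds for all $M\ge1$ as required by the lemma. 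Your remark that a Markov-type bound on $\|a^{(d_k)}\|$ cannot work (since $a_j$ has finite absolute moments only of order $<2$ under the stated tail condition) correctly identifies why the linear-in-$d_k$ Fubini--Study estimate, rather than a naive moment or union bound, is the right tool.
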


\proof Put $a^{(d_k)}:= (a_1,\ldots, a_{d_k})$, and   
$$I_k(a_1,\ldots, a_{d_k}):= \int_A \bigg(\log \bigg|\sum_{j=1}^{d_k} a_j p_j(z)\bigg| - 1/2\log \sum_{j=1}^{d_k} |p_j(z)|^2 \bigg) \omega_{FS,n}^m$$
Observe 
 $$\bigg|\sum_{j=1}^{d_k} a_j p_j(z)\bigg| \le \|a^{(d_k)}\| (\sum_{j=1}^{d_k} |p_j(z)|^2)^{1/2}.$$
 It follows that for $a^{(d_k)} \in E_{k}$, one has
 $$2M \vol(A) \log d_k \le I_k(a^{(d_k)}) \le \log \|a^{(d_k)}\|\vol(A).$$
 This implies that for each $k$ there exists $1\le j_k \le d_k$ such that $|a_{j_k}| \ge d_k^{(4M-1)/2}$. We infer that 
\begin{align*}
\cali{P}_k(E_{k}) &\le \cali{P}_k\bigg(a^{(d_k)}: |a_j| \ge d_k^{(4M-1)/2} \, \text{for some } 1\le j \le d_k\bigg) \\
&\le  d_k \int_{|a_1| \ge d_k^{(4M-1)/2}} f \Leb_C \le C_0 d_k^{-4M+1} \le C_0 d_k^{-3M}.
\end{align*}  
Similarly, we also get 
$\cali{P}_k(E'_k) \lesssim d_k^{-3M}$. This finishes the proof.
\endproof

Put $a^{(d_k)}:= (a_1,\ldots, a_{d_k})$, and $p^{(d_k)}:= (p_1,\ldots, p_{d_k})$.  Define
$$\varphi(a^{(d_k)}):= \int_{z \in L} \log \frac{\big| \sum_{j=1}^{d_k} a_j p_j(z)\big|}{(\|a^{(d_k)}\|^2+1)^{1/2}\|p^{(d_k)}(z)\|} \omega_{FS, n}^m(z).$$  
Observe that $\varphi \le 0$ on $\C^{d_k}$. We put
$$I_\varphi:= \int_{\C^{d_k}} \varphi(a^{(d_k)}) \Omega_{FS, d_k}$$

\begin{lemma} \label{le-uocluongtichphancuavarpgiNk} There exists a constant $C>0$ such that for every $k \ge 1$ we have
$$I_\varphi \ge - C \log d_k.$$
\end{lemma}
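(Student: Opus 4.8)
The plan is to evaluate $I_\varphi$ by Fubini, exploit the rotational symmetry of $\Omega_{FS,d_k}$ to reduce the inner integral to a single universal quantity, and then bound that quantity using Lemma \ref{le-maxkhithetich}. First I would observe that
$$\big|{\sum_{j=1}^{d_k}} a_j p_j(z)\big| \le \|a^{(d_k)}\|\,\|p^{(d_k)}(z)\| \le (\|a^{(d_k)}\|^2+1)^{1/2}\|p^{(d_k)}(z)\|,$$
so the integrand defining $\varphi$ is everywhere $\le 0$; hence Tonelli's theorem applies and $I_\varphi$ equals the iterated integral, first over $a^{(d_k)}\in\C^{d_k}$ against $\Omega_{FS,d_k}$ and then over $z\in L$ against $\omega_{FS,n}^m$. (Here $\|p^{(d_k)}(z)\|>0$ for every $z$, since the $p_j$ span $\mathcal{P}_k(\C^n)\ni 1$ and so cannot vanish simultaneously.) For fixed $z$, writing $\sum_j a_j p_j(z)=\|p^{(d_k)}(z)\|\,\ell_z(a^{(d_k)})$ with $\ell_z$ a $\C$-linear functional of operator norm $1$, one picks $U\in\U(d_k)$ with $\ell_z(a^{(d_k)})=(Ua^{(d_k)})_1$; since $U$ extends to an automorphism of $\P^{d_k}$ fixing the affine chart $\C^{d_k}$, preserving the Euclidean norm and preserving $\omega_{FS,d_k}$, the substitution $a^{(d_k)}\mapsto U^{-1}a^{(d_k)}$ shows that the inner integral equals the $z$-independent number
$$J_k:=\int_{\C^{d_k}}\log\frac{|a_1|}{(\|a^{(d_k)}\|^2+1)^{1/2}}\,\Omega_{FS,d_k}(a^{(d_k)}).$$
Consequently $I_\varphi=V_L\,J_k$, where $V_L:=\int_L\omega_{FS,n}^m<\infty$ because the closure of $L$ in $\P^n$ is an algebraic subvariety, and the task reduces to proving $J_k\ge -C\log d_k$.

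To bound $J_k$, I would recognize it as an integral over all of $\P^{d_k}$: in homogeneous coordinates the function $g([\zeta]):=\log|\zeta_1|-\log\|\zeta\|$ (with $\|\zeta\|^2=\sum_{i=0}^{d_k}|\zeta_i|^2$) is well defined, satisfies $g\le 0$ and $g([0:1:0:\cdots:0])=0$, and in the chart $\{\zeta_0\ne 0\}$ it equals $\log|a_1|-\tfrac12\log(1+\|a^{(d_k)}\|^2)$; since $\ddc\big(\tfrac12\log(1+\|a^{(d_k)}\|^2)\big)=\omega_{FS,d_k}$ and $\ddc\log|a_1|\ge 0$, we get $\ddc g+\omega_{FS,d_k}\ge 0$, so $g$ is $\omega_{FS,d_k}$-psh, in particular $g\in L^1(\Omega_{FS,d_k})$ and $J_k=\int_{\P^{d_k}}g\,\Omega_{FS,d_k}>-\infty$. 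Then I would apply Lemma \ref{le-maxkhithetich} to the mean-zero $\omega_{FS,d_k}$-psh function $u:=g-J_k$, obtaining $u\le C(1+\log d_k)$ on $\P^{d_k}$ with $C$ uniform in $k$; evaluating at $[0:1:0:\cdots:0]$ gives $-J_k=u([0:1:0:\cdots:0])\le C(1+\log d_k)$, whence $J_k\ge -C(1+\log d_k)$. Since $d_k=\binom{k+n}{n}\ge 2$ for $k\ge 1$, this is $\ge -C'\log d_k$, and therefore $I_\varphi=V_L J_k\ge -C''\log d_k$, as claimed.

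The step I expect to require the most care is the symmetry reduction: one must verify that $\Omega_{FS,d_k}$ restricted to the affine chart $\C^{d_k}\subset\P^{d_k}$ really is invariant under the linear $\U(d_k)$-action — equivalently, that $\U(d_k)$ sits inside $\mathrm{PU}(d_k+1)$ as the stabilizer of $[1:0:\cdots:0]$ together with the hyperplane at infinity — so that replacing the arbitrary norm-one functional $\ell_z$ by the coordinate $a_1$ is legitimate. Everything else is routine bookkeeping, together with the uniform-in-dimension estimate of Lemma \ref{le-maxkhithetich}, which is precisely the input that makes the error term come out as $\log d_k$ rather than a worse function of $k$.
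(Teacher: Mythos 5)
Your argument is correct and follows the same route as the paper: Fubini plus $\U(d_k)$-invariance reduce $I_\varphi$ to $V_L$ times the universal integral $J_k$ of the $\omega_{FS,d_k}$-psh function $g$, after which Lemma \ref{le-maxkhithetich} applied to the mean-zero normalization gives the $\log d_k$ bound. The only cosmetic difference is that you evaluate the normalized function at the point at infinity $[0:1:0:\cdots:0]$ where $g$ vanishes exactly, while the paper evaluates at the finite point $(1,0,\ldots,0)$ where $\psi=-\tfrac{1}{2}\log 2$; both yield the same conclusion.
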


\proof  Let $I$ be the right-hand side of the desired inequality. By Funibi's theorem and the transitivity of the unitary group on $\C^{d_k}$, one has
\begin{align*}
I_\varphi &= \int_{z \in L} \omega_{FS, n}^m \int_{\C^{d_k}} \varphi(a^{(d_k)}) \Omega_{FS, d_k}\\
&= \int_{z \in L} \omega_{FS, n}^m \int_{\C^{d_k}} \log \frac{|a_1|}{(\|a\|^2+1)^{1/2}} \Omega_{FS, d_k}.
\end{align*}
The function 
$$\psi:= \log \frac{|a_1|}{(\|a^{(d_k)}\|^2+1)^{1/2}}$$
 is $\omega_{FS,d_k}$-psh on $\P^{d_k}$ (where $\omega_{FS, d_k}$ is the Fubini-Study form on $\P^{d_k}$). Let $\psi':= \psi - \int_{\C^{d_k}} \psi \Omega_{FS, d_k}$. Thus $\int_{\P^{d_k}} \psi' \Omega_{FS, d_k}=0$ and $\psi'$ is $\omega_{FS, d_k}$-psh. Applying Lemma \ref{le-maxkhithetich} to $\psi'$ gives
$$\psi' \le c(1+ \log d_k)$$
for some constant $c>0$ independent of $k, \psi'$. Consequently, 
$$\psi(a^{(d_k)}) \le c(1+ \log d_k)+ \int_{\C^{d_k}} \psi \Omega_{FS, d_k}$$
for every $a^{(d_k)} \in \C^m$. In particular for $a^{(d_k)}= (1,0, \ldots, 0)$, we obtain 
$$\int_{\C^{d_k}} \psi \Omega_{FS, d_k} \ge - c(1+ \log d_k)-1.$$ 
Thus the desired inequality follows.
\endproof

\begin{proof}[End of the proof of Theorems \ref{the-zeros} and \ref{the-zerosgiaoL}]
Let $\varphi':= \varphi -I_\varphi$. We have $\int_{\C^{d_k}} \varphi' \Omega_{FS, d_k}=0$. By Lemma \ref{le-maxkhithetich} again, there are constants $c, \alpha>0$ independent of $k$ such that  
$$\int_{\{\varphi' \le -t\}} \Omega_{FS, d_k} \le c d_k e^{-\alpha t}. $$
Combining this with Lemma \ref{le-uocluongtichphancuavarpgiNk} yields
$$\int_{\{\varphi \le - C \log d_k -t\}} \Omega_{FS, d_k} \le c d_k e^{-\alpha t}. $$
Let $M\ge 1$ be  a constant. Choosing $t:= 4M  \alpha^{-1} \log d_k$, where $C>0$ big enough  gives 

\begin{align}\label{le-chantrenvarphi}
\int_{\{\varphi \le - (C+3/\alpha) \log d_k\}} \Omega_{FS, d_k} \le c d_k^{-3M}.
\end{align}

Let $E'_k$ be the set of $a^{(d_k)}$ such that $\|a^{(d_k)}\| \le d_k^{2M}$ and $\varphi \ge -(C+4M/\alpha) \log d_k$. Combining (\ref{le-chantrenvarphi}) and Lemma \ref{le-giongawn}, we obtain that 
$$\cali{P}_k(\C^{d_k} \backslash E'_k) \lesssim  2c d_k^{-3M}$$
(we increase $c$ if necessary).
On the other hand, by the definition of $E'_k$ and $\varphi$, we see that the set of $a^{(d_k)}$ such that 
$$\int_{z \in L} \log \frac{\big| \sum_{j=1}^{d_k} a_j p_j(z)\big|}{\|p^{(d_k)}(z)\|} \omega_{FS, n}^m(z) \ge   -C'_M \log d_k$$
(for some  constant $C'_M>0$ big enough independent of $k$)
contains $E'_k$. It follows that 
$$\cali{P}_k\bigg\{ a^{(d_k)}: \int_{z \in L} \log \frac{\big| \sum_{j=1}^{d_k} a_j p_j(z)\big|}{\|p^{(d_k)}(z)\|} \Omega_{FS, n}(z) \le   -C'_M \log d_k  \bigg\} \le 2c d_k^{-3M}. $$ 
This together with Lemma \ref{le-giongawn} implies that there exists a Borel set $F_k$ such that $\cali{P}_k(F_k) \le 3c d_k^{-3M}$, and for $a^{(d_k)} \not \in F_k$, one has $\|a^{(d_k)}\| \le d_k^{2M}$, and 
$$\int_{z \in L} \log \frac{\big| \sum_{j=1}^{d_k} a_j p_j(x)\big|}{\|p^{(d_k)}(z)\|} \omega_{FS, n}^m(z) \ge   -C'_M \log d_k. $$ 
Consequently
for  $p= \sum_{j=1}^{d_k} a_j p_j$,  $\psi_k:= 1/(2k) \log \tilde{B}_k$ and  $a^{(d_k)} \not \in F_k$, there holds
$$- C'_M\log k/ k \le \int_{L} (k^{-1} \log |p^{(d_k)}| - \psi_k) \omega_{FS, n}^m.$$
 Moreover $\|a^{(d_k)}\| \le d_k^{2M}$, one has  
\begin{align*}
|k^{-1} \log |p|- \psi_k| &= 2  \max\{k^{-1} \log |p|, \psi_k\}- \psi_k -k^{-1}\log |p| \\
&\le  \psi_k -k^{-1}\log |p|+ 2 M k^{-1} \log d_k.
\end{align*}
It follows that 
$$\int_{L} | k^{-1} \log |p| - \psi_k| \omega_{FS, n}^m \le C''_M \log d_k/k,$$
for some constant $C''_M>0$ independent of $k$. This together with Theorem \ref{th-xapxiphiKBkCn} implies
$$\int_{L} | k^{-1} \log |p| -V_{K,Q}| \omega_{FS,n}^m \le C''_M \frac{\log k}{k}$$
by increasing $C''_M$ if necessary.
It follows that 
$$\dist_{-2} \big(k^{-1}[p=0] \wedge [L], \ddc V_{K,Q} \wedge [L]\big) \lesssim \int_{L} | k^{-1} \log |p| - \psi_k| \omega_{FS, n}^m \le  \frac{C''_M\log k}{k},$$
for $a^{(d_k)} \not \in F_k$. Here recall that we define $\dist_{-2}$ by considering $[p=0]\wedge [L]$ and $\ddc V_{K,Q} \wedge [L]$ as currents on $\P^m$. This finishes the proof.
\end{proof}

\bibliography{biblio_family_MA,biblio_Viet_papers}
\bibliographystyle{siam}

\bigskip

\noindent
\Addresses
\end{document}